\documentclass [leqno,11pt]{amsart}
\usepackage{amssymb,amsmath, amsthm, color}

\setlength{\oddsidemargin}{0mm}
\setlength{\evensidemargin}{0mm} \setlength{\topmargin}{0mm}
\setlength{\textheight}{220mm} \setlength{\textwidth}{155mm}

\numberwithin{equation}{section}

\allowdisplaybreaks



\let\b=\beta

\let\r=\rho

\let\f=\frac

\let\om=\omega

\let\Om=\Omega

\let\na=\nabla

\let\pa=\partial
\let\ph=\varphi



\def\div{\mbox{div}}

\def\curl{\mathop{\rm curl}\nolimits}
\def\Ds{\langle \nabla\rangle^{s- \frac{1}{2}}}

\def\bbT{\mathbb{T}}

\newcommand{\newcom}{\newcommand}
\newcommand{\vc}[1]{{\bf #1}}
\newcom{\BT}{{\mathbb{T}^2}}
\newcom{\ve}{\vc{e}}
\newcom{\vN}{\vc{N}}
\newcom{\vn}{\vc{n}}
\newcom{\vG}{\vc{G}}
\newcom{\vF}{\vc{F}}
\newcom{\vf}{\vc{f}}
\newcom{\vg}{\vc{g}}
\newcom{\vq}{\vc{q}}
\newcom{\vu}{\vc{u}}
\newcom{\vv}{\vc{v}}
\newcom{\vV}{\vc{V}}
\newcom{\vw}{\vc{w}}
\newcom{\vW}{\vc{W}}
\newcom{\vb}{\vc{b}}
\newcom{\vh}{\vc{h}}
\newcom{\vz}{\vc{z}}
\newcom{\vup}{\vu^{+}}
\newcom{\vum}{\vu^{-}}
\newcom{\vvp}{\vv^{+}}
\newcom{\vvm}{\vv^{-}}
\newcom{\vbp}{\vb^{+}}
\newcom{\vbm}{\vb^{-}}
\newcom{\vhp}{\vh^{+}}
\newcom{\vhm}{\vh^{-}}
\newcom{\Omp}{{\Om^+}}
\newcom{\Omm}{{\Om^-}}
\newcom{\vupm}{{\vu^{\pm}}}
\newcom{\vvpm}{{\vv^{\pm}}}
\newcom{\vbpm}{{\vb^{\pm}}}
\newcom{\vhpm}{{\vh^{\pm}}}
\newcom{\vFpm}{{\vF^{\pm}}}
\newcom{\vwp}{{\vc{w}^+}}
\newcom{\vwm}{{\vc{w}^-}}
\newcom{\vwpm}{{\vc{w}^{\pm}}}
\newcom{\Ompm}{{\Omega^{\pm}}}
\newcom{\vom}{\boldsymbol{\omega}}
\newcom{\vvap}{\boldsymbol{\varpi}}
\newcom{\vop}{\vom^{+}}
\newcom{\vnu}{\boldsymbol{\nu}}
\newcom{\vopm}{\vom^{\pm}}
\newcom{\vjp}{\vj^+}
\newcom{\vjm}{\vj^-}
\newcom{\vjpm}{\vj^{\pm}}
\newcom{\vj}{\boldsymbol{\xi}}
\newcommand{\beq}{\begin{equation}}
\newcommand{\eeq}{\end{equation}}
\newcommand{\ben}{\begin{eqnarray}}
\newcommand{\een}{\end{eqnarray}}
\newcommand{\beno}{\begin{eqnarray*}}
\newcommand{\eeno}{\end{eqnarray*}}
\def\eqdefa{\buildrel\hbox{\footnotesize def}\over =}

\newtheorem{theorem}{Theorem}[section]
\newtheorem{definition}[theorem]{Definition}
\newtheorem{lemma}[theorem]{Lemma}
\newtheorem{proposition}[theorem]{Proposition}

\newtheorem{remark}[theorem]{Remark}

\setcounter{section}{0}
\begin{document}

\title[Free boundary problem in incompressible elastodynamics]{Well-posedness of the free boundary problem in incompressible elastodynamics}

\author{Hui Li}
\address{School of  Mathematical Sciences, Peking University, Beijing 100871, China}
\email{lihui92@pku.edu.cn}

\author{Wei Wang}
\address{Department of Mathematics, Zhejiang University, Hangzhou 310027, China}
\email{wangw07@zju.edu.cn}

\author{Zhifei Zhang}
\address{School of  Mathematical Sciences, Peking University, Beijing 100871, China}
\email{zfzhang@math.pku.edu.cn}

\date{\today}

\begin{abstract}
In this paper, we prove the local well-posedness of the free boundary problem in incompressible elastodynamics under a natural stability condition,
which ensures that the evolution equation describing the free boundary is strictly hyperbolic.
Our result gives a rigorous confirmation that the elasticity has a stabilizing effect on the Rayleigh-Taylor instability.

\end{abstract}

\maketitle


\section{introduction}
\subsection{Presentation of the problem}
In this paper, we consider the incompressible inviscid flow in 3-D elastodynamics:
\begin{equation}\label{elso}
	\left\{
	\begin{array}{l}
\pa_t \rho +  \vu\cdot\nabla\rho=0,\\
		\pa_t (\rho\vu) +  \vu\cdot\nabla(\rho\vu)+ \nabla p= \div(\rho\vF\vF^\top), \\
		\pa_t  \vF +  \vu\cdot\nabla \vF = \nabla\vu \vF,\\
		\div \vu = 0, \\
	\end{array}
	\right.
\end{equation}
where $\rho$ is the density of fluids, $\vu(t,x)=(u_1,u_2,u_3)$ denotes the fluid velocity,
$p(t,x)$ is the pressure, $\vF(t,x)=(F_{ij})_{3\times 3}$ is the deformation tensor, $\vF^\top=(F_{ji})_{3\times 3}$ denotes the transpose of the matrix $\vF$,
$\vF\vF^\top$ is the Cauchy-Green tensor in the case of neo-Hookean elastic materials,
$(\nabla\vu)_{ij}=\pa_ju_i$, $(\nabla\vu \vF)_{ij}=\sum^3_{k=1}\vF_{kj}\partial_k u_i$,
$(\div \vF^\top)_i=\sum^3_{j=1}\pa_jF_{ji}$,
$(\div\vF\vF^\top)_i=\sum^3_{j,k=1}\pa_j(F_{ik}F_{jk})$.

We study the solution of (\ref{elso}) which are smooth on each side of a smooth interface $\Gamma(t)$
in a domain $\Omega$. Precisely, for simplicity, we let
\begin{align*}
&	\Om=\mathbb{T}^2\times[-1,1]\subset \mathbb{R}^3,\quad  \Gamma(t)=\{x\in\Om|x_3=f(t,x'),x'=(x_1,x_2)\in\bbT^2\},\\
& \Om_t^\pm=\{x\in\Om|x_3\gtrless f(t,x'),x'=(x_1,x_2)\in\bbT^2\},\qquad Q_T^\pm=\underset{t\in(0,T)}{\bigcup}\{t\}\times\Om^\pm_t.
\end{align*}
We consider that $\rho_{\Om_t^\pm}=\rho^\pm$ are two constants, and
\begin{align*}
	\vu^\pm:=\vu|_{\Om^\pm_t},\qquad \vF^\pm:=\vF|_{\Om^\pm_t},\qquad p^\pm:=p|_{\Om^\pm_t},
\end{align*}
are smooth in $Q_T^\pm$ and satisfy
\begin{equation}\label{els}
	\left\{
	\begin{array}{ll}
		\rho^\pm\big(\pa_t \vupm + \vupm\cdot\nabla \vupm\big)+ \nabla p^{\pm} =  \rho^\pm\sum\limits^3_{j=1}(\vF_j^\pm\cdot\nabla) \vF_j^\pm &\text{ in }\quad Q^{\pm}_T,  \\
		\div \vupm = 0,\,\div \vF^{\pm\top} = 0&\text{ in }\quad  Q^{\pm}_T,  \\
		\pa_t  \vF_j^\pm +  \vu^\pm\cdot\nabla \vF_j^\pm =(\vF_j^\pm\cdot\nabla)\vu^\pm&\text{ in }\quad Q^{\pm}_T,
	\end{array}
	\right.
\end{equation}
with the boundary conditions on the moving interface $\Gamma_t$:
\begin{align}\label{elsp}
	[p]\eqdefa p^+-p^-=0,\quad \vupm\cdot\vn =V(t,x), \quad \vF_j^\pm\cdot\vn = 0.
\end{align}
Here $\vF^\pm_j=(F^\pm_{1j},F^\pm_{2j},F^\pm_{3j})$, $\vn$ is the outward unit normal to $\pa\Om^-_t$ and $V(t,x)$ is the normal velocity of $\Gamma_t$.
On the artificial boundary $\Gamma^\pm=\bbT^2\times\{\pm1\}$, we impose the following boundary conditions on ($\vu^\pm$, $\vF^\pm$):
\begin{align}\label{vorsheet:top-bot-bc}
	u_3^\pm=0,\qquad F_{3j}^\pm=0\qquad \text{on}~\Gamma^\pm.
\end{align}

The system (\ref{els}) is supplemented with the initial data
\begin{align}\label{elsi}
	\vupm(0,x)=u _0^\pm(x),\qquad \vF^\pm(0,x)=\vF_0^\pm\qquad \text{in}~\Om_0^\pm,
\end{align}
where the initial data satisfies
\begin{equation}\label{elsii}
  \left\{
  	\begin{array}{ll}
  		\div\vu^\pm_0=0,~\div\vF^\pm_{j,0}=0&\text{in}~\Om_0^\pm,\\
  		\vu^+_0\cdot\vn_0=\vu^-_0\cdot\vn_0,~\vF^\pm_{j,0}\cdot\vn_0=0&\text{on}~\Gamma_0.
  	\end{array}
  \right.
\end{equation}
The system (\ref{els})-(\ref{elsi}) is called the vortex sheet problem for incompressible elastodynamics.
One of main goals in this paper is to study the local well-posedness of this system under some suitable stability conditions imposed on the initial data.

In our setting, the boundary condition on $\Gamma_t$ in (\ref{els}) is transformed into
\begin{align*}
	[p]=0,\qquad \vu^\pm\cdot \vN=\pa_tf,\qquad \vF_j^\pm\cdot \vN=0\qquad \text{on}~\Gamma_t,
\end{align*}	
where $\vN=(-\pa_1f,-\pa_2f,1)$ and $\vn= {\vN}/{|\vN|}.$

Let us remark that the divergence free restriction on $\vF_j^\pm$ is automatically satisfied if $\div\vF^\pm_{j,0}=0$. Indeed, if we apply the divergence operator to the third equation of (\ref{els}), we will deduce the following transport equation
\begin{align*}
	\pa_t\div\vF^\pm_j + \vupm\cdot\nabla\div\vF^\pm_j=0.
\end{align*}
Similar argument can be also applied to yield that $\vF^\pm_j\cdot\vN = 0$ if $\vF^\pm_{j,0}\cdot\vN_0 = 0$.

For a special case $\rho^+=0$, the problem reduces to another type of free boundary problem for idea incompressible elastodynamics,
that is,
\begin{equation}\label{elsf}
	\left\{
	\begin{array}{ll}
		\pa_t \vu+  \vu\cdot\nabla \vu+ \nabla p= \sum\limits^3_{j=1}\vF_j\cdot\nabla \vF_j&\text{in}\qquad \Om_t^-, \\
		\div \vu = 0,\quad\div \vF^\top=0&\text{in}\qquad \Om_t^-,\\
		\pa_t  \vF_j +  \vu\cdot\nabla \vF_j = \vF_j\cdot\nabla\vu&\text{in}\qquad \Om_t^-,
	\end{array}
	\right.
\end{equation}
where $\vF_j=(F_{1j},F_{2j},F_{3j})$ and $\Om_t^-$ and $\Gamma_t$ are defined as above.
On the free boundary $\Gamma_t$, the boundary conditions are given by
\begin{align}\label{freebp:interface-bc}
	p=0,\qquad \vu\cdot \vN=\pa_tf, \qquad \vN\cdot\vF_j=0\qquad \text{on}\quad \Gamma_t,
\end{align}
while on the bottom boundary $\Gamma^-$, it holds that
\begin{align}\label{freebp:bot-bc}
	u_3=0,\qquad F_{3j}=0\qquad \text{on}~\Gamma^-.
\end{align}	
This system is supplemented with initial data
\begin{align}\label{elsfi}
	\vu(0,x)=u_0(x),\qquad \vF(0,x)=\vF_0\qquad \text{in}~\Om_0^-.
\end{align}

\subsection{Background}

For incompressible inviscid flow, the Kelvin-Helmholtz instability has been known for over a century \cite{Maj}. It was well known that the surface tension can stabilize the Kelvin-Helmholtz and Rayleigh-Taylor instability \cite{AM, CCS, SZ2}.  Syrovatskij \cite{Sy} and Axford \cite{Ax} found that the magnetic field has a stabilization effect on the Kelvin-Helmholtz instability. Recently, there are many important works devoted to confirming this stabilizing mechanism. For the current-vortex sheet problem, we refer to \cite{Tra1, Tra2, Chen, WY} for compressible case and \cite{MTT1, Tra-in, CMST, SWZ1} for incompressible case. For plasma-vacuum problem, we refer to \cite{Tra-JDE, ST} for compressible case and \cite{MTT2, SWZ2} for incompressible case. We also refer to some related works \cite{HL, Hao, GW} on the incompressible  plasma-vacuum problem.

For the inviscid  elastodynamics, there are several recent progress on the free boundary problems. For 2-D compressible vortex sheet problem in elastodynamics, Chen-Hu-Wang \cite{CHW} analyzed the linearized stability and proved the stabilization effect of elasticity on vortex sheets. In \cite{Tra3}, Trakhinin proved the well-posedness of the one-fluid free boundary problem in compressible elastodynamics under the condition that there are two columns of the $3\times 3$ deformation tensor which are non-collinear at each point of the initial surface. For the incompressible case, Hao-Wang \cite{HW} proved a priori estimates for solutions in Sobolev spaces under the Rayleigh-Taylor sign condition.

The aim of this paper is to show the local well-posedness for both two free boundary problems in incompressible elastodynamics under a natural stability condition by using the  method developed in \cite{SWZ1}. The basic idea is to derive an evolution equation describing the free boundary
so that it is strictly hyperbolic under a suitable stability condition. This idea is very effective to study the free boundary problems of the incompressible Euler equations \cite{Wu1, Wu2, ZZ, SZ1}.

\subsection{Main results}
To ensure the stability of the system (\ref{els})-(\ref{elsi}) and the system (\ref{elsf})-(\ref{elsfi}), certain stability conditions are required. In this paper, we assume the following stability condition for (\ref{els})-(\ref{elsi}):
\begin{align}\label{condition:s1}
\inf_{\Gamma_t} \,\,\,\inf_{\xi\in\mathbb{S}^2,\,\xi\cdot\vN=0} \Big\{(\rho^++\rho^-)\Big[\rho^+(\xi\cdot{\vF}^+)^2+\rho^-(\xi\cdot{\vF}^-)^2\Big]-\rho^+\rho^-(\xi\cdot[\vu])^2\Big\}>0,
\end{align}
where $\xi\cdot\vF=(\xi_iF_{ij})_{1\le j\le 3}$ and   $[\vu]=\frac{1}{2}(\vu^+-\vu^-)$ on $\Gamma_f$. (\ref{condition:s1}) is equivalent to
\begin{align}\label{condition:s2}
	\Lambda(\vFpm,\vv)\eqdefa&\inf_{x\in\Gamma_t}\inf_{\ph_1^2+\ph_2^2=1}\sum^3_{j=1}\Big(\frac{\rho^+}{\r^++\r^-}(F^+_{1j}\ph_1+F^+_{2j}\ph_2)^2
+\frac{\rho^-}{\r^++\r^-}(F^-_{1j}\ph_1+F^-_{2j}\ph_2)^2\Big)\\ \nonumber
		&-(v_1\ph_1+v_2\ph_2)^2>0,
\end{align}
where $(v_1,v_2,v_3)=\frac{\sqrt{\r^+\r^-}}{\r^++\r^-}[\vu]$. Our first main result is stated as follows.
\begin{theorem}\label{thm:1}
 Let $s\ge3$ be an integer and assume that
	\begin{align*}
		f_0\in H^{s+ \frac{1}{2}}(\bbT^2),\quad \vu^\pm_0,\,\vF^\pm_0\in H^s(\Om_0^\pm).
	\end{align*}
Furthermore we assume that there exists $c_0>0$ so that
	\begin{itemize}
		\item[1.] $-(1-2c_0)\le f_0\le (1-2c_0)$;
		\item[2.] $\Lambda(\vF_0^\pm,\vv_0)\ge 2c_0$.
	\end{itemize}
	Then there exists $T>0$ such that the system (\ref{els}) admits a unique solution $(f, \vu, \vF)$ in $[0,T]$ satisfying
	\begin{itemize}
		\item[1.] $f\in L^\infty([0,T), H^{s+\frac{1}{2}}(\bbT^2))$;
		\item[2.] $\vu^\pm,\,\vF^\pm\in L^\infty\big(0,T;H^{s}(\Omega^\pm_t)\big)$;
		\item[3.] $-(1-c_0)\le f\le (1-c_0)$;
		\item[4.] $\Lambda(\vF^\pm,\vv)\ge c_0$.
	\end{itemize}
\end{theorem}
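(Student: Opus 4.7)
My plan is to follow the strategy of \cite{SWZ1}: reduce the coupled system to an intrinsic evolution equation for the free interface $f$ that is strictly hyperbolic precisely under the stability condition (\ref{condition:s2}), use this hyperbolic structure to control the top-order regularity of $f$, and combine it with interior transport and $\div$--$\curl$ estimates on $(\vu^\pm,\vF^\pm)$. As a first step I would flatten the geometry by means of a harmonic (or otherwise regularized) extension of $f$ in the vertical variable, producing $t$-dependent diffeomorphisms $\Phi^\pm(t,\cdot):\Om_0^\pm\to\Om_t^\pm$ which fix $\Gamma^\pm$ and map $\Gamma_0$ to $\Gamma_t$. The structural constraints $\div\vu^\pm=0$, $\div\vF^{\pm\top}=0$, $\vF^\pm_j\cdot\vN=0$ and the artificial boundary conditions on $\Gamma^\pm$ are propagated by the underlying transport equations (as noted right after (\ref{elsii})), so it suffices to solve the evolution equations on the fixed reference domain.

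The key analytic step is to derive the boundary evolution equation. Taking the divergence of the momentum equation and using $\div\vu^\pm=\div\vF^{\pm\top}=0$, I get two elliptic problems for $p^\pm$ in $\Om_t^\pm$ with right-hand sides quadratic in $\nabla\vu^\pm$ and $\nabla\vF^\pm$; the jump condition $[p]=0$ together with $\vupm\cdot\vN=\pa_t f$ couple them through a two-phase Dirichlet--Neumann operator. Differentiating $\vupm\cdot\vN=\pa_t f$ in $t$ and substituting the momentum equation at the interface yields a second-order equation for $f$ of the schematic form
\begin{equation*}
(\rho^++\rho^-)\,\pa_t^2 f + \mathcal{L} f = (\text{lower-order terms}),
\end{equation*}
whose principal symbol along tangential frequency $\xi'$ is proportional to
\begin{equation*}
|\xi'|\,\Big\{(\rho^++\rho^-)\big[\rho^+(\xi'\cdot\vF^+)^2+\rho^-(\xi'\cdot\vF^-)^2\big]-\rho^+\rho^-(\xi'\cdot[\vu])^2\Big\}.
\end{equation*}
The dominant contribution of the two-phase Dirichlet--Neumann operator in each phase is $\rho^\pm|\xi'|$, while the material derivatives $\pa_t+\vupm\cdot\nabla$ and the elastic stress $\vF^\pm\cdot\nabla$ contribute exactly the displayed quadratic form. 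Thus (\ref{condition:s1})--(\ref{condition:s2}) is precisely the statement that this symbol is positive, i.e.\ that the boundary equation is strictly hyperbolic; this gives the crucial $H^{s+1/2}$ control of $f$ from an $H^s$ energy of $\pa_t f$.

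Armed with this, I would set up a linearized iteration: given $(\vu^\pm_n,\vF^\pm_n,f_n)$, solve a linear system for $(\vu^\pm_{n+1},\vF^\pm_{n+1},f_{n+1})$ whose transport velocities are $\vu^\pm_n$, whose pressure is given by the frozen two-phase elliptic problem, and whose boundary evolution is the linearization of the strictly hyperbolic equation from the previous paragraph. Uniform-in-$n$ energy estimates split into three pieces: interior $H^s$ bounds for $\vu^\pm_{n+1},\vF^\pm_{n+1}$ from transport plus $\div$--$\curl$ estimates (where $\div\vF^\pm_j=0$ is essential), $H^{s+1/2}$ elliptic bounds for $p^\pm$, and the hyperbolic boundary estimate for $f_{n+1}$ using the coercivity provided by (\ref{condition:s2}). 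Since conditions 3--4 of the theorem are open, they persist on a short time interval independent of $n$; I then extract a limit, verify it solves the nonlinear system, and prove uniqueness by an analogous energy estimate on the difference of two solutions.

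The main obstacle will be the boundary energy estimate: obtaining $H^{s+1/2}$ control of $f$ without derivative loss requires careful paradifferential calculus on the two-phase Dirichlet--Neumann operator, isolating the quadratic form in (\ref{condition:s2}) at the symbol level, and keeping the commutators with $\pa_t+\vupm\cdot\nabla$ and with $\vF^\pm\cdot\nabla$ at strictly lower order. A tangential mollification of the iterates (or a Nash--Moser scheme), together with a regularization of the initial data compatible with the structural constraints in (\ref{elsii}), will likely be needed to avoid an apparent loss of derivatives in the Picard loop, as in \cite{SWZ1}.
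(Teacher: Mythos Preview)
Your overall strategy---derive a strictly hyperbolic evolution equation for $f$, iterate on a linearization, recover $(\vu^\pm,\vF^\pm)$ via div--curl systems---matches the paper and \cite{SWZ1}. But your description of the principal symbol is off in a way that matters for the whole scheme.

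You write that the principal symbol of the boundary operator is proportional to $|\xi'|\{\cdots\}$, with the $|\xi'|$ coming from the two-phase Dirichlet--Neumann operator. In fact, after you use the jump condition $[p]=0$ to solve for $\underline{p}^\pm=\widetilde{\mathcal N}_f^{-1}(g^+-g^-)$ with $\widetilde{\mathcal N}_f=\tfrac1{\rho^+}\mathcal N_f^++\tfrac1{\rho^-}\mathcal N_f^-$, the DN operators appear in the $\partial_t\theta$ equation only through the combination $(\mathcal N_f^+-\mathcal N_f^-)\widetilde{\mathcal N}_f^{-1}$, which is order \emph{zero} (since $\mathcal N_f^+-\mathcal N_f^-$ is a smoothing operator of order $0$). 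Consequently the boundary equation is a \emph{local} second-order wave equation,
\[
D_t^2 f=\sum_{s,r=1}^2\Big(-v_sv_r+\tfrac{\rho^+}{\rho^++\rho^-}\sum_j\underline F^+_{sj}\underline F^+_{rj}+\tfrac{\rho^-}{\rho^++\rho^-}\sum_j\underline F^-_{sj}\underline F^-_{rj}\Big)\partial_s\partial_r f+\text{l.o.t.},
\]
with $D_t=\partial_t+w_i\partial_i$, and condition (\ref{condition:s2}) is exactly the positivity of the coefficient matrix. There is no $|\xi'|$ at principal level.

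This changes the analysis considerably. Because the principal part is a genuine variable-coefficient wave operator, the $H^{s+1/2}$ energy estimate for $f$ is obtained by a direct integration-by-parts argument (commute with $\langle\nabla\rangle^{s-1/2}$, pair with $D_t\langle\nabla\rangle^{s-1/2}f$, use the commutator lemma); no paradifferential calculus on the DN operator is required, and no Nash--Moser scheme is needed. The paper closes the iteration as an ordinary contraction map: it iterates on $(f,\curl\vu^\pm,\curl\vF_j^\pm,\beta_i^\pm,\gamma_{ij}^\pm)$ pulled back to a fixed reference domain, proves uniform bounds in the strong norm, and contraction in one derivative less. Your anticipated ``main obstacle'' therefore dissolves once you recognize that the DN contribution is subprincipal.
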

When $\rho^+=0$, the stability condition (\ref{condition:s1}) reduces to $(\xi\cdot{\vF}^-)^2>0$ on $\Gamma_t$
for any $\xi\in\mathbb{S}^2$ with $\xi\cdot\vN=0$, which is equivalent to $\text{rank}(\vF)=2.$
Therefore, as a corollary of Theorem \ref{thm:1}, we have the following result which concerns the well-posedness for the system (\ref{elsf})-(\ref{elsfi}).
\begin{theorem}\label{thm:2}
 Let $s\ge3$ be an integer and assume that
	\begin{align*}
		f_0\in H^{s+ \frac{1}{2}}(\bbT^2),\quad \vu_0,\,\vF_0\in H^s(\Om_0^\pm).
	\end{align*}
		Furthermore we assume that there exists $c_0>0$ so that
	\begin{itemize}
		\item[1.] $-(1-2c_0)\le f_0\le (1-2c_0)$;
		\item[2.] $\rm{rank}(\vF_0)=2$ on $\Gamma_0$.
	\end{itemize}
	Then there exists $T>0$ such that the system (\ref{elsf}) admits a unique solution $(f, \vu, \vF)$ in $[0,T]$ satisfying
	\begin{itemize}
		\item[1.] $f\in L^\infty([0,T), H^{s+\frac{1}{2}}(\bbT^2))$;
		\item[2.] $\vu,\,\vF\in L^\infty\big(0,T;H^{s}(\Omega^\pm_t)\big)$;
		\item[3.] $-(1-c_0)\le f\le (1-c_0)$;
		\item[4.] $\rm{rank}(\vF)=2$ on $\Gamma_t$.
	\end{itemize}
\end{theorem}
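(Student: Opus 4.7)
The plan is to deduce Theorem 1.2 as the degenerate case $\rho^+=0$ of Theorem 1.1, working from the viewpoint that (\ref{elsf})--(\ref{freebp:bot-bc}) is precisely the one-sided problem recovered from (\ref{els})--(\ref{vorsheet:top-bot-bc}) when the upper density vanishes. The key structural observation is that when $\rho^+=0$, the momentum equation on $\Om^+_t$ reduces to $\nabla p^+=0$, so $p^+$ is constant, and the interface condition $[p]=0$ forces $p^-$ to equal that constant on $\Gamma_t$; absorbing this constant gives $p=0$ on $\Gamma_t$, which is exactly the boundary condition in (\ref{freebp:interface-bc}). Thus the evolution on $\Om^-_t$ in Theorem 1.1 with $\rho^+=0$ matches (\ref{elsf}) verbatim, while the $\Om^+_t$-side equations still make formal sense but decouple from $\Om^-_t$.

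Concretely, I would proceed as follows. First, construct an $H^s$ extension of $(\vu_0,\vF_0)$ to $\Om^+_0$, i.e. choose $\vu_0^+,\vF_{j,0}^+\in H^s(\Om^+_0)$ with $\div \vu_0^+=0$, $\div\vF_{j,0}^+=0$, $\vu_0^+\cdot\vN_0=\vu_0^-\cdot\vN_0$ and $\vF_{j,0}^+\cdot\vN_0=0$ on $\Gamma_0$, and $u_{3,0}^+=F_{3j,0}^+=0$ on $\Gamma^+$ (a standard construction, e.g.\ by harmonic extension together with a divergence correction). Second, verify that (\ref{condition:s2}) with $\rho^+=0$ reduces to the rank condition. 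Indeed, with $\rho^+=0$ one has $\vv_0=0$, so
\begin{align*}
\Lambda(\vF_0^\pm,\vv_0)=\inf_{\ph_1^2+\ph_2^2=1}\sum_{j=1}^3\bigl(F^-_{1j,0}\ph_1+F^-_{2j,0}\ph_2\bigr)^2,
\end{align*}
which is the squared norm of $\vF_0^{-\top}\xi$ for $\xi=(\ph_1,\ph_2,0)$ in the tangent plane at $\Gamma_0$. Since $\vF_{j,0}^-\cdot\vN_0=0$ puts $\vN_0$ in $\ker\vF_0^{-\top}$, the positivity of $\Lambda$ is equivalent to $\dim\ker\vF_0^{-\top}=1$, i.e.\ $\mathrm{rank}(\vF_0^-)=2$. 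Third, apply Theorem 1.1 to the extended data with $\rho^+=0$; restricting the resulting solution to $\Om^-_t$ yields the desired solution of (\ref{elsf})--(\ref{elsfi}), with the four conclusions of Theorem 1.2 inherited from the corresponding items in Theorem 1.1.

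The main obstacle is a conceptual one rather than a computational one: one must confirm that the proof of Theorem 1.1 remains valid, and no denominator vanishes, at the boundary value $\rho^+=0$. This reduces to checking that the stability condition (\ref{condition:s1}) depends continuously on $\rho^+$ down to $\rho^+=0$, that the derived hyperbolic equation for $f$ remains strictly hyperbolic (its coercivity is controlled by $\Lambda$, which is positive in the rank-$2$ regime), and that all energy estimates on the $\Om^+_t$-side degenerate gracefully (the pressure coupling drops out, and the system for $(\vu^+,\vF^+)$ becomes a transport--stretching system that can be estimated independently). Once this is verified, the corollary follows directly; no genuinely new analysis beyond Theorem 1.1 is required.
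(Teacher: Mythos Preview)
Your reduction of the stability condition to $\mathrm{rank}(\vF_0)=2$ is correct and matches the paper's Section~6. However, your overall strategy---extend the data to $\Om^+_0$ and invoke Theorem~\ref{thm:1} as a black box at $\rho^+=0$---is not what the paper does, and it runs squarely into the denominator problem you flag at the end. The two-phase reformulation in Section~3 is built around the operator $\widetilde{\mathcal N}_f=\tfrac{1}{\rho^+}\mathcal N_f^++\tfrac{1}{\rho^-}\mathcal N_f^-$, and the formula for $\underline p^\pm$ as well as the $\theta$-equation (\ref{eq:theta}) contain $\widetilde{\mathcal N}_f^{-1}$ and explicit factors of $\tfrac{1}{\rho^+}$; at $\rho^+=0$ none of this is well-defined, so Theorem~\ref{thm:1} as stated and proved simply does not apply. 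Making your approach rigorous would require either a uniform-in-$\rho^+$ limit or a direct verification of the degenerate case---and the latter is precisely what the paper carries out.

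The paper instead reruns the scheme of Theorem~\ref{thm:1} directly for the one-phase problem, with no artificial $\Om^+$-side and no extension of the data. Because $p=0$ on $\Gamma_t$ is already prescribed, no Dirichlet--Neumann inversion is needed to recover the pressure trace: the $\theta$-equation collapses to
\[
\partial_t\theta=-2(\underline u_1\partial_1\theta+\underline u_2\partial_2\theta)-\tfrac{1}{\rho}\vN\cdot\underline{\nabla p}-\sum_{s,r=1}^2\underline u_s\underline u_r\,\partial_s\partial_r f+\sum_{j=1}^3\sum_{s,r=1}^2\underline F_{sj}\underline F_{rj}\,\partial_s\partial_r f,
\]
with $p=\sum_{j}p_{\vF_j,\vF_j}-p_{\vu,\vu}$. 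The energy functional simplifies to $E_s=\|(\partial_t+\underline u_i\partial_i)\langle\nabla\rangle^{s-1/2}f\|_{L^2}^2+\sum_j\|\underline F_{ij}\partial_i\langle\nabla\rangle^{s-1/2}f\|_{L^2}^2$, and the rank-$2$ condition yields (\ref{linear:equi-norm-2}) directly. The remaining steps (vorticity system, iteration map, contraction) then follow the proof of Theorem~\ref{thm:1} verbatim. This route is both simpler and avoids the extension construction and the singular-limit issue in your proposal.
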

\begin{remark}
We remark that the assumption $\rm{rank}(\vF_0)=2$  on $\Gamma_0$ is weaker than the assumption proposed
by Trakhinin \cite{Tra3}, which says that among the three vectors $\vF_1$, $\vF_2$ and $\vF_3$ there are two which are non-collinear at each point
of $\Gamma_0$.
There is another type of stability condition which would also ensure the existence of solutions to the system (\ref{elsf})-(\ref{elsfi}) :
\begin{align}
-\vN\cdot\nabla p>0, \quad \text{ on }\Gamma_t,
\end{align}
see \cite{HW} for a priori estimates results.
One would also be interested in studying the wellposedness under the following mixed type of stability condition:
\begin{align}
\{ x\in \Gamma_t: {\rm{rank}}(\vF(x))=2 \}\cup \{ x\in \Gamma_t: -\vN\cdot\nabla p>0 \} =\Gamma_t.
\end{align}
These cases can also be handled in this framework, which will be left in a forthcoming work.
\end{remark}

\begin{remark}
Our method could be applied to 2-D case, which in particular means that the elasticity has a stabilization effect on the Rayleigh-Taylor instability.
Indeed, for the 2-D case, we have that $\vF^\pm$ are $2\times 2$ matrices, and $\vF_j^\pm\cdot\vN=0, [\vu]\cdot\vN=0$, which implies that
$\vF_j^\pm, [\vu]$ are collinear to each other. Therefore, the stability condition (\ref{condition:s1}) for  (\ref{els})-(\ref{elsi}) reduces to
\begin{align}
(\rho^++\rho^-)\Big[\rho^+|{\vF}^+|^2+\rho^-|{\vF}^-|^2\Big]-\rho^+\rho^-|[\vu]|^2>0,
\end{align}
and for the system (\ref{elsf})-(\ref{elsfi}), the stability condition $\text{rank}(\vF)=2$ reduces to
\begin{align}
  |\vF|>0.
\end{align}
The solutions can be constructed in a similar way as Theorem \ref{thm:1} and \ref{thm:2}.

\end{remark}

The rest of this paper is organized as follows. In Section 2, we will introduce the reference domain, harmonic coordinate, and the Dirichlet-Neumann operator. In Section 3, we reformulate the system into a new formulation. In Section 4, we present the uniform estimates for the linearized system. In Section 5, we prove the existence and uniqueness of the solution. In Section 6, we present a sketch of the proof of Theorem \ref{thm:2}.

\section{Reference domain, harmonic coordinate and Dirichlet-Neumann Operator}
For free boundary problems, as the domain of the fluid is changing with time $t$, we always draw the moving domain back to a fixed domain which is called reference domain \cite{SWZ1}.

 Let $\Gamma_*$ be a fixed graph given by
\begin{align*}
	\Gamma_*=\{(y_1,y_2,y_3):y_3=f_*(y_1,y_2)\}.
\end{align*}
The reference domain $\Om^\pm_*$ is given by
\begin{align*}
	\Om_*=\bbT^2\times(-1,1),\quad \Om^\pm_*=\{y\in\Om_*|y_3\lessgtr f_*(y_1,y_2)\}.
\end{align*}
We will look for the free boundary which lies in a neighborhood of the reference domain. As a result, we define
\begin{align*}
	\Upsilon(\delta,k)&\eqdefa\Big\{f\in H^k(\mathbb{T}^2): \|f-f_*\|_{H^k(\mathbb{T}^2)}\le \delta \Big\}.
\end{align*}
For $f\in \Upsilon(\delta,k)$, we can define the graph $\Gamma_f$ by
\begin{align*}
	\Gamma_f\eqdefa\left\{x\in \Om_t| x_3=f(t,x'), \int_{\mathbb{T}^2}f(t,x')dx'=0 \right\}.
\end{align*}
The graph $\Gamma_f$ separates $\Omega_t$ into two parts:
\begin{align*}
	\Om_f^{+}=\Big\{ x \in \Om_t| x_3 > f(t,x')\Big\}, \quad \Om_f^{-}=\Big\{ x \in \Om_t| x_3 < f(t,x')\Big\}.
\end{align*}
Let $\vN_f=(N_1,N_2,N_3)$ be the outward normal vector of $\Om_f^-$ where
\begin{align*}
	\vN_f\triangleq(-\partial_1f, -\partial_2f, 1),\quad \vn_f\triangleq\vN_f/\sqrt{1+|\nabla f|^2}.
\end{align*}

Then we need to find the draw back maps. For this purpose, we introduce the harmonic coordinate. Given $f\in \Upsilon(\delta,k)$, we define a map $\Phi_f^\pm:\Omega_*^\pm\to\Omega_f^\pm$
by harmonic extension:
\begin{equation}
	\left\{
	\begin{array}{ll}
		\Delta_y \Phi_f^\pm=0, &y\in \Omega_*^\pm,\\
		\Phi_f^\pm(y',f_*(y'))=(y',f(y')),  &y'\in\mathbb{T}^2,\\
		\Phi_f^\pm(y',\pm1)=(y',\pm1),  &y'\in\mathbb{T}^2.\\
	\end{array}
	\right.
\end{equation}

Given $\Gamma_*$, there exists $\delta_0=\delta_0(\|f_*\|_{W^{1,\infty}})>0$ so that $\Phi_f^\pm$ is a bijection when $\delta\le \delta_0$.
Then we can define an inverse map $\Phi_f^{\pm-1}:\Omega_f^\pm\to\Omega_*^\pm$ such that
\begin{equation}\nonumber
	\Phi_f^{\pm-1}\circ\Phi_f^\pm=\Phi_f^\pm\circ\Phi_f^{\pm-1}=\mathrm{Id}.
\end{equation}

The following properties come from \cite{SWZ1}.

\begin{lemma}\label{lem:basic}
Let $f\in \Upsilon(\delta_0,s-\f12)$ for $s\ge 3$. Then there exists a constant $C$ depending only on $\delta_0$ and  $\|f_*\|_{H^{s-\f12}}$ so that

\begin{itemize}
	\item[1.] If $u\in H^{\sigma}(\Om_f^\pm)$ for $\sigma\in [0,s]$, then
	\begin{align*}
		\|u\circ\Phi_f^\pm\|_{H^\sigma(\Om^\pm_*)}\le C\|u\|_{H^\sigma(\Om_f^\pm)}.		
	\end{align*}
	\item[2.] If $u\in H^{\sigma}(\Om_*^\pm)$ for $\sigma\in [0,s]$, then
	\begin{align*}
		\|u\circ\Phi_f^{\pm-1}\|_{H^{\sigma}(\Om_f^\pm)}\le C\|u\|_{H^\sigma(\Om_*^\pm)}.		
	\end{align*}
	\item[3.] If $u, v\in H^{\sigma}(\Om_*^\pm)$ for $\sigma\in [2,s]$, then
	\begin{align*}
		\|uv\|_{H^\sigma(\Omega_f^\pm)}\le C\|u\|_{H^\sigma(\Omega_f^\pm)}\|v\|_{H^\sigma(\Omega_f^\pm)}.
	\end{align*}
\end{itemize}
\end{lemma}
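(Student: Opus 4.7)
The plan is to reduce all three items to two ingredients: (i) elliptic regularity for the harmonic extension defining $\Phi_f^\pm$, which will bound the map in a strong enough norm that its Jacobian is bounded in $L^\infty$ together with its inverse; and (ii) the standard composition and product estimates in Sobolev spaces on a Lipschitz domain. Since $\sigma$ ranges over an interval of real numbers and the bound must be uniform for $f$ in the ball $\Upsilon(\delta_0,s-\tfrac12)$, one organizes the argument so as to reduce to a single quantitative bound on the diffeomorphism, after which the composition/product estimates are applied as a black box.

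First I would control $\Phi_f^\pm$ itself. Writing $\Phi_f^\pm = \Phi_{f_*}^\pm + \Psi_f^\pm$ where $\Psi_f^\pm$ is the harmonic extension of $(0,0,f-f_*)$ with vanishing trace on $\Gamma^\pm$, elliptic regularity for the Dirichlet problem on the reference slab $\Omega_*^\pm$ gives
\begin{equation*}
\|\Phi_f^\pm - \mathrm{Id}\|_{H^s(\Omega_*^\pm)} \le C\bigl(\|f_*\|_{H^{s-\frac12}(\mathbb{T}^2)} + \|f-f_*\|_{H^{s-\frac12}(\mathbb{T}^2)}\bigr) \le C(\|f_*\|_{H^{s-\frac12}},\delta_0).
\end{equation*}
Because $s\ge 3$, we have $H^{s-1}(\Omega_*^\pm)\hookrightarrow L^\infty$, so $\nabla \Phi_f^\pm \in L^\infty\cap H^{s-1}$. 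The bijection statement already asserted in the setup plus the standard chord-arc argument gives $|\det \nabla \Phi_f^\pm| \ge c>0$ uniformly, and hence $\nabla \Phi_f^{\pm -1}$ enjoys the same bounds. In particular, both $\Phi_f^\pm$ and $\Phi_f^{\pm -1}$ are bilipschitz and the Jacobian factors in change of variables are controlled.

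For items 1 and 2, once the diffeomorphism is controlled in $H^s$ with Jacobian bounded and bounded away from zero, the composition estimates
$\|u\circ \Phi_f^\pm\|_{H^\sigma(\Omega_*^\pm)}\le C\|u\|_{H^\sigma(\Omega_f^\pm)}$ and its inverse version follow from the chain rule / Faà di Bruno formula for integer $\sigma \le s$, bounding each product of derivatives of $\Phi_f^\pm$ against $u$ by Hölder plus the Sobolev embeddings $H^{s-1}\hookrightarrow L^\infty$ and $H^{s-k}\hookrightarrow L^{p_k}$; the endpoint cases $\sigma=0$ are the change-of-variables $L^2$ bound, and the intermediate non-integer $\sigma$ are obtained by real interpolation between integer exponents. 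Item 3 is the algebra property $\|uv\|_{H^\sigma}\lesssim \|u\|_{H^\sigma}\|v\|_{H^\sigma}$ on a Lipschitz domain when $\sigma > 3/2$, which holds in our range $\sigma\in[2,s]$ directly from Moser-type product estimates together with $H^2(\Omega_f^\pm)\hookrightarrow L^\infty$.

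The main obstacle is not any single step, but keeping the constants uniform in $f\in\Upsilon(\delta_0,s-\tfrac12)$ and over all real $\sigma\in[0,s]$; the cleanest fix is to prove the composition estimate only at the integer endpoints $\sigma=0,1,\dots,s$ (where the constants depend polynomially on $\|\Phi_f^\pm-\mathrm{Id}\|_{H^s}$ and on the lower bound on the Jacobian, both of which are controlled by $\delta_0$ and $\|f_*\|_{H^{s-1/2}}$), and then to interpolate. Since the result is stated as coming from \cite{SWZ1}, I would at this point simply cite that reference rather than reproduce the full argument.
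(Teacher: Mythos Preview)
The paper does not prove this lemma at all; it is introduced with ``The following properties come from \cite{SWZ1}'' and no argument is given. Your sketch is a correct outline of the standard proof (elliptic regularity for the harmonic extension to control $\Phi_f^\pm-\mathrm{Id}$ in $H^s$ uniformly over $\Upsilon(\delta_0,s-\tfrac12)$, then Fa\`a di Bruno plus interpolation for the composition estimates, and the Sobolev algebra property for item~3), and your closing remark that one should simply cite \cite{SWZ1} is exactly what the paper does. One small refinement: for item~3 the uniformity in $f$ of the product constant on the moving domain $\Omega_f^\pm$ is cleanest if you pull back to $\Omega_*^\pm$ via items~1 and~2 rather than invoke Moser estimates directly on $\Omega_f^\pm$.
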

We will use the Dirichlet-Neumann operator, which maps the Dirichlet boundary value of a harmonic function to its Neumann boundary value. That is to say,
for any $g(x')=g(x_1,x_2)\in H^k(\mathbb{T}^2)$, we denote by $\mathcal{H}_f^\pm g$  the harmonic extension to $\Omega^\pm_f$:
\begin{equation}
	\left\{
	\begin{array}{ll}
		\Delta \mathcal{H}_f^\pm g =0,& x\in \Omega_f^\pm,\\
		(\mathcal{H}_f^\pm g)(x',f(x'))=g(x'),&  x'\in\mathbb{T}^2,\\
		\partial_3\mathcal{H}_f^\pm g(x',\pm1)=0,&  x'\in\mathbb{T}^2.
	\end{array}
	\right.
\end{equation}
Then the Dirichlet-Neumann operator is defined by
\begin{align}
  \mathcal{N}^\pm_fg\overset{def}{=}\mp\vN_f\cdot(\nabla\mathcal{H}^\pm_fg)\big|_{\Gamma_f}.
\end{align}
We will use the following properties from \cite{ABZ, SWZ1}.
\begin{lemma}\label{lem:DN}
	It holds that
	\begin{itemize}
		\item[1.] $\mathcal{N}^\pm_f$ is a self-adjoint operator:
		\begin{align*}
			(\mathcal{N}^\pm_f\psi,\phi)=(\psi,\mathcal{N}^\pm_f\phi),\quad\forall \phi,\, \psi\in H^\frac{1}{2}(\bbT^2);
		\end{align*}
		\item[2.] $\mathcal{N}^\pm_f$ is a positive operator:
		\begin{align*}
			(\mathcal{N}^\pm_f\phi,\phi)=\|\na\mathcal{H}_f^\pm\phi\|_{L^2(\Omega_f)}^2\ge 0,\quad \forall \phi\in H^\frac{1}{2}(\bbT^2);	
		\end{align*}
		Especially, if $\int_{\bbT^2}\phi(x')dx'=0$, there exists $c>0$ depending on $c_0, \|f\|_{W^{1,\infty}}$ such that
		\begin{align*}
			(\mathcal{N}^\pm_f\phi,\phi)\ge c\|\mathcal{H}_f^\pm\phi\|_{H^1(\Omega_f)}^2\ge c\|\phi\|_{H^\frac{1}{2}}^2.
		\end{align*}
		\item[3.] $\mathcal{N}^\pm_f$ is a bijection from $H^{k+1}_0(\bbT^2)$ to $H^{k}_0(\bbT^2)$ for $k\ge 0$, where
		\begin{align*}
			H^{k}_0(\bbT^2)\eqdefa H^k(\bbT^2)\cap\{\phi\in L^2(\bbT^2):\int_{\bbT^2}\phi(x')dx'=0\}.
		\end{align*}
	\end{itemize}
\end{lemma}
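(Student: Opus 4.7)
My plan is to derive all three assertions from the single Green-type identity
\begin{equation*}
(\cN_f^\pm \phi, \psi)_{L^2(\bbT^2)} = \int_{\Omega_f^\pm} \nabla \cH_f^\pm \phi \cdot \nabla \cH_f^\pm \psi \, dx,
\end{equation*}
obtained by integrating by parts the harmonic function $\cH_f^\pm \phi$ against $\cH_f^\pm \psi$ on $\Omega_f^\pm$. On the artificial boundaries $\Gamma^\pm$ the Neumann condition $\partial_3 \cH_f^\pm \phi = 0$ kills the boundary contribution, and on $\Gamma_f$ a careful tracking of the outward unit normal versus the non-unit $\vN_f$ together with the surface element $dS = |\vN_f| \, dx'$ reproduces exactly the definition $\cN_f^\pm = \mp \vN_f \cdot \nabla \cH_f^\pm$ in both sign cases. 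Symmetry of the right-hand side in $(\phi, \psi)$ gives self-adjointness (item 1), while specializing $\psi = \phi$ produces the identity asserted in item 2.

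For the sharpened lower bound $(\cN_f^\pm \phi, \phi) \gtrsim \|\phi\|_{H^{1/2}}^2$ in the zero-mean case, I would combine a Poincar\'e inequality on $\Omega_f^\pm$ (available because zero mean of $\phi$ on $\Gamma_f$ together with the Neumann condition on $\Gamma^\pm$ suppresses the constant mode of $\cH_f^\pm \phi$) with the trace embedding $H^1(\Omega_f^\pm) \hookrightarrow H^{1/2}(\Gamma_f)$ and the Dirichlet-energy minimization property of the harmonic extension. This chain yields $\|\phi\|_{H^{1/2}}^2 \lesssim \|\cH_f^\pm \phi\|_{H^1(\Omega_f^\pm)}^2 \lesssim \|\nabla \cH_f^\pm \phi\|_{L^2}^2$, the constants depending only on $c_0$ and $\|f\|_{W^{1,\infty}}$ through the Lipschitz character of $\Gamma_f$.

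For item 3 I would verify in turn the range condition, injectivity, and surjectivity. The divergence theorem applied to the harmonic function $\cH_f^\pm \phi$, together with the Neumann vanishing on $\Gamma^\pm$, yields $\int_{\bbT^2} \cN_f^\pm \phi \, dx' = 0$, so the range lies in the zero-mean subspace. Injectivity on $H^{k+1}_0$ is immediate from the positive-definite lower bound of item 2. For existence of a weak solution $\phi \in H^{1/2}_0$ to $\cN_f^\pm \phi = g$ with $g \in H^k_0$, I would apply Lax--Milgram to the bilinear form $a(\phi, \psi) = (\cN_f^\pm \phi, \psi)$ on the mean-zero subspace of $H^{1/2}(\bbT^2)$, using continuity (via Lemma \ref{lem:basic}) and the coercivity from the previous paragraph.

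The main obstacle is the regularity upgrade $\phi \in H^{1/2}_0 \Longrightarrow \phi \in H^{k+1}_0$ when $g \in H^k_0$. Because $\Gamma_f$ only has $H^{s-1/2}$ regularity, classical smooth-boundary elliptic regularity does not apply directly. My plan is to pull the problem back to the fixed reference domain $\Omega_*^\pm$ via the harmonic coordinate $\Phi_f^\pm$, where the Laplace equation becomes an elliptic equation with variable coefficients of $H^{s-1/2}$ regularity on a domain with flat top, bottom, and interface; one then iterates tangential difference-quotient estimates together with the equation to recover normal derivatives, invoking the product estimate in item 3 of Lemma \ref{lem:basic} to keep the variable coefficients under control at each level $k \leq s-1$.
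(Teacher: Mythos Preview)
The paper does not prove this lemma; it merely states the properties and attributes them to \cite{ABZ, SWZ1} with the sentence ``We will use the following properties from \cite{ABZ, SWZ1}.'' So there is no in-paper proof against which to compare.

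Your argument is the standard and correct one, and is essentially what one finds in those references. The Green identity you isolate is exactly the computation that underlies items~1 and~2, and the sign bookkeeping you describe (outward normal $\mp\vn_f$ on $\Gamma_f$ for $\Omega_f^\pm$, surface element $|\vN_f|\,dx'$ cancelling the $1/|\vN_f|$ in $\vn_f$) is right. The Poincar\'e step for the coercive lower bound is valid because the only harmonic function with vanishing gradient is a constant, and a constant with zero $x'$-mean on $\Gamma_f$ is zero; the dependence of the constant on $c_0$ and $\|f\|_{W^{1,\infty}}$ is the correct one. For item~3 your Lax--Milgram plus elliptic-regularity-in-harmonic-coordinates scheme is precisely the mechanism behind Proposition~\ref{prop:DN-Hs} and Proposition~\ref{prop:DN-inverse}, which the paper also imports from \cite{SWZ1}; those propositions already encode the regularity upgrade you describe (pullback to $\Omega_*^\pm$, tangential difference quotients, recovery of normal derivatives from the equation), so in the logic of the paper the bijectivity in item~3 is effectively a consequence of those propositions together with the coercivity in item~2.
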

We will use $x=(x_1,x_2,x_3)$ or $y=(y_1,y_2,y_3)$ to denote the coordinates in the fluid region, and use $x'=(x_1,x_2)$  or $y'=(y_1,y_2)$ to denote the natural coordinates on the interface or on the top/bottom boundary.
In addition, we will use the Einstein summation notation where a summation from 1 to 2 is implied over repeated index,
while a summation from 1 to 3 over repeated index will be explicitly figured out by the symbol $\sum$  (i.e. $a_ib_i=a_1b_1+a_2b_2, \sum_{i=1}^3a_ib_i=a_1b_1+a_2b_2+a_3b_3$).

For a function $g:\Om\to\mathbb R$, we denote $\nabla g=(\pa_1g,\pa_2g,\pa_3g)$, and for a function $\eta:\bbT^2\to \mathbb R$, $\nabla\eta=(\pa_1\eta,\pa_2\eta)$. For a function $g:\Om^\pm_f\to\mathbb R$, we can define its trace on  $\Gamma_f$, which is denoted by $\underline g(x')$. Thus, for $i=1,2$,
\begin{align*}
	\pa_i\underline g(x')=\pa_i g(x',f(x'))+\pa_3g(x',f(x'))\pa_if(x').
\end{align*}
We denote by $||\cdot||_{H^s(\Om)}$ the Sobolev norm in $\Om$, and by $||\cdot||_{H^s}$ the Sobolev norm in $\bbT^2$.

\section{Reformulation of the problem}
In this section, we derive a new system which is equivalent to the original system (\ref{els})-(\ref{elsi}). The system consists of the evolution equations of the following quantities:
\begin{itemize}
	\item The height function of the interface: $f$;
	\item The scaled normal velocity on the interface: $\theta=\vu^\pm\cdot\vN_f$;
	\item The curl part of velocity and deformation tensor in the fluid region: $\vom^\pm=\nabla\times\vu^\pm$, $\vG_j^\pm=\nabla\times\vF_j^\pm$;
	\item The average of tangential part of velocity and deformation tensor field on top and bottom fixed boundary:
	\begin{align*}
		\beta_i^\pm(t)=\int_{\bbT^2}u_i^\pm(t,x',\pm1)dx',\quad \gamma^\pm_{ij}(t)=\int_{\bbT^2}F_{ij}^\pm(t,x',\pm1)dx'\,\,(i=1,2; j=1,2,3).
	\end{align*}
\end{itemize}

\subsection{Evolution of the scaled normal velocity}
Let
\begin{align}
	\theta(t,x'){=}\vu^\pm(t,x',f(t,x'))\cdot\vN_f(t,x').
\end{align}
Then we have
\begin{align}\label{eq:form:f}
	\partial_tf(t,x')=\theta(t,x').
\end{align}
First of all, one can easily obtain the following elementary lemma, which is useful in the derivation of the evolution of $\theta$.
\begin{lemma}\cite{SWZ1}\label{rel-uh}
	For $\vu=\vupm,\vF^\pm_j$, we have
	\begin{align}\nonumber
		&({\vu}\cdot{\nabla\vu})\cdot\vN_f-{\partial_3u}_jN_j({\vu}\cdot\vN_f)\big|_{x_3=f(t,x')}\nonumber\\
		&=\underline{u}_1\partial_1(\underline{u}_jN_j)+\underline{u}_2\partial_2(\underline{u}_jN_j)
		+\sum_{i,j=1,2}\underline{u}_i\underline{u}_j\partial_i\partial_jf.
		\end{align}
\end{lemma}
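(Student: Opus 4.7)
The identity is purely a chain-rule/algebraic statement about a vector field $\vu$ evaluated at the graph $x_3=f(t,x')$, and the same computation works identically for $\vu^\pm$ and for each column $\vF_j^\pm$. So I would not invoke any PDE; I would just carry out the differentiation-under-the-graph carefully.

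The plan is to start from the chain rule for the boundary trace: for any smooth function $g$ on $\Omega_f^\pm$ and $i=1,2$,
\begin{align*}
\partial_i \underline{g}(x') = \partial_i g(x',f(x'))+\partial_3 g(x',f(x'))\,\partial_i f(x'),
\end{align*}
so $\partial_i g|_{x_3=f}=\partial_i \underline g-\partial_3 g\,\partial_i f$. Applying this to each component $u_k$ and using the explicit sum convention from 1 to 3, one obtains
\begin{align*}
(\vu\cdot\nabla \vu)_k\big|_{x_3=f}=\underline{u}_i\partial_i\underline{u}_k+\Big(\underline{u}_3-\underline{u}_1\partial_1 f-\underline{u}_2\partial_2 f\Big)\partial_3 u_k=\underline{u}_i\partial_i\underline{u}_k+(\underline{u}_jN_j)\,\partial_3 u_k,
\end{align*}
recognizing $\underline{u}_jN_j=\vu\cdot\vN_f|_{x_3=f}$. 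Here $i$ runs over $1,2$ and the second occurrence of $j$ is summed over $1,2,3$.

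Next I would dot with $\vN_f$. The second piece immediately produces the term $(\vu\cdot\vN_f)\,\partial_3 u_jN_j$ that is subtracted on the left-hand side, so it suffices to analyze $\underline{u}_i\sum_k N_k\partial_i\underline{u}_k$. I would rewrite it by moving $N_k$ inside the derivative:
\begin{align*}
\sum_{k=1}^3 N_k\partial_i\underline{u}_k=\partial_i(\underline{u}_kN_k)-\sum_{k=1}^3 \underline{u}_k\partial_iN_k.
\end{align*}
Since $\vN_f=(-\partial_1 f,-\partial_2 f,1)$, one has $\partial_i N_k=-\partial_i\partial_k f$ for $k=1,2$ and $\partial_iN_3=0$, so $\sum_k\underline{u}_k\partial_iN_k=-\underline{u}_m\partial_i\partial_m f$ with $m$ summed over $1,2$. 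Substituting back and pairing with $\underline{u}_i$ yields exactly
\begin{align*}
\underline{u}_1\partial_1(\underline{u}_jN_j)+\underline{u}_2\partial_2(\underline{u}_jN_j)+\sum_{i,j=1,2}\underline{u}_i\underline{u}_j\partial_i\partial_j f,
\end{align*}
which is the claimed right-hand side.

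There is no real obstacle here; the only thing to watch is the index convention (1-to-2 implicit versus 1-to-3 explicit), in particular the fact that $\underline{u}_jN_j$ stands for the full three-component dot product $\vu\cdot\vN_f$ while $\underline{u}_i\underline{u}_j\partial_i\partial_j f$ only involves the horizontal indices because $\partial_3 f\equiv 0$. For $\vu=\vF_j^\pm$ the same derivation applies verbatim, component by component, which is why the statement is uniform in $\vu=\vu^\pm,\vF_j^\pm$.
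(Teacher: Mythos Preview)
Your computation is correct and is exactly the standard chain-rule verification of this identity. Note that the paper itself does not prove this lemma: it is quoted from \cite{SWZ1} without proof, so there is nothing further to compare.
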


Combining the first equation of (\ref{els}) and Lemma \ref{rel-uh}(recall $\vF^\pm_j\cdot\vN_f=0$ on $\Gamma_t$), one can obtain
\begin{align}\nonumber
	\partial_t\theta=&(\partial_t\vu^++\partial_3\vu^+\partial_tf)\cdot\vN_f+\vu^+\cdot\partial_t\vN_f\big|_{x_3=f(t,x')}\\
=&(-\vup\cdot\nabla\vup+\sum^3_{j=1}(\vF_j\cdot\nabla)\vF_j-\nabla p^++\partial_3\vu^+\partial_tf)\cdot\vN_f\nonumber\\
&-
\vu^+\cdot(\partial_1\partial_tf,\partial_1\partial_tf,0)\big|_{x_3=f(t,x')}\nonumber\\\nonumber
=&\big(-(\vup\cdot\nabla)\vup+\partial_3\vu^+(\vup\cdot\vN_f)\big)\cdot\vN_f+\sum^3_{j=1}(\vF_j\cdot\nabla)\vF_j\cdot\vN_f\\
&-\vN_f\cdot\nabla p^+-\vu^+\cdot(\partial_1\theta, \partial_2\theta,0)\big|_{x_3=f(t,x')}\nonumber\\
=&-2(\underline{u}^+_1\partial_1\theta+\underline{u}^+_2\partial_2\theta)-\frac{1}{\r^+}\vN\cdot\underline{\nabla p}^+-\sum^2_{s,r=1}
	\underline{u}_s^+\underline{u}^+_r\partial_s\partial_rf
+\sum^3_{j=1}\sum^2_{s,r=1}\underline{F}^+_{sj}\underline{F}^+_{rj}\partial_s\partial_rf,\label{eq:theta-d}
\end{align}
and similarly,
\begin{align}\label{eq:theta-d-2}
	\partial_t\theta	=&-2(\underline{u}^-_1\partial_1\theta+\underline{u}^-_2\partial_2\theta)-\frac{1}{\r^-}\vN\cdot\underline{\nabla p}^--\sum^2_{s,r=1}
	\underline{u}_s^-\underline{u}^-_r\partial_s\partial_rf
+\sum^3_{j=1}\sum^2_{s,r=1}\underline{F}^-_{sj}\underline{F}^-_{rj}\partial_s\partial_rf.\nonumber
\end{align}

Taking the divergence to the first equation of (\ref{els}), we get
\begin{align}
\Delta p^\pm=\rho^\pm\big(\sum\limits^3_{j=1}\mathrm{tr}(\nabla\vF^\pm_j)^2-\mathrm{tr}(\nabla\vu^\pm)^2\big).
\end{align}
Recall that $\underline{p}^\pm=p^\pm|_{\Gamma_f}$ and $\mathcal{H}^\pm_f$
is the harmonic extension from $\Gamma_f$ to $\Omega^\pm_f$.
Then for the pressure $p^\pm$, we have the following important representation:
\begin{align}
	p^\pm=\mathcal{H}_f^\pm\underline{p}^\pm+\rho^\pm p_{\vupm, \vupm}-\rho^\pm\sum^3_{j=1}p_{\vF^\pm_j, \vF^\pm_j},
\end{align}
where $p_{\vu_1, \vu_2}$ is the solution of elliptic equation
\begin{equation}\label{eqp}
	\left\{
	\begin{array}{ll}
		\Delta p_{\vu_1^\pm, \vu_2^\pm}= -\mathrm{tr}(\nabla\vu_1^\pm\nabla\vu_2^\pm)
		&\text{in}\quad\Omega^\pm_f,\\
		p_{\vu_1^\pm, \vu_2^\pm}=0&\text{on}\quad\Gamma_f,\\
		\ve_3\cdot\nabla p_{\vu_1^\pm, \vu_2^\pm}=0&\text{on}\quad\Gamma^\pm.
	\end{array}\right.
\end{equation}

Thus, from (\ref{eq:theta-d}) and (\ref{eq:theta-d-2}),  we have on $\Gamma_f$ that,
\begin{align*}
	&\frac{1}{\r^+}\vN_f\cdot\nabla \mathcal{H}^+_f\underline{p}^+
	-\frac{1}{\r^-}\vN_f\cdot\nabla \mathcal{H}^-_f\underline{p}^-\\
	&=-\Big[2(\underline{u}^+_1\partial_1\theta+\underline{u} ^+_2\partial_2\theta)+\vN_f\cdot\underline{\nabla(p_{\vup, \vup}-\sum^3_{j=1}p_{\vF^+_j, \vF^+_j})}
	+\sum^2_{s,r=1}(\underline{u} _s^+\underline{u} ^+_r-\sum^3_{j=1}\underline{F} ^+_{sj}\underline{F} ^+_{rj})\partial_s\partial_rf\Big]\\
	&\quad+\Big[2(\underline{u} ^-_1\partial_1\theta+\underline{u} ^-_2\partial_2\theta)+\vN_f\cdot\underline{\nabla(p_{\vum, \vum}-\sum^3_{j=1}p_{\vF^-_j, \vF^-_j})}
	+\sum^2_{s,r=1}(\underline{u} _s^-\underline{u} ^-_r-\sum^3_{j=1}\underline{F} ^-_{sj}\underline{F} ^-_{rj})\partial_s\partial_rf\Big]\\
	&\triangleq -g^++g^-.
\end{align*}
From the definition of DN operator, one has
\begin{align}
	-\frac{1}{\r^+}\mathcal{N}^+_f\underline{p}^+-\frac{1}{\r^-}\mathcal{N}^-_f\underline{p}^-=-g^++g^-.
\end{align}
As  $	\underline{p}^+-\underline{p}^-=0$  on $\Gamma_f$,
we have
\begin{align*}
	\underline{p}^\pm=\widetilde{\mathcal{N}}^{-1}_f(g^+-g^-),
\end{align*}
where
\begin{align*}
	\widetilde{\mathcal{N}}_f=\frac{1}{\r^+}{\mathcal{N}}_f^++\frac{1}{\r^-}{\mathcal{N}}_f^-.
\end{align*}
In addition, we can write
\begin{align*}
	{\mathcal{N}}_f^+=&(\frac{1}{\r^+}+\frac{1}{\r^-})^{-1}(\widetilde{\mathcal{N}}_f+\frac{1}{\r^-}({\mathcal{N}}_f^+-{\mathcal{N}}_f^-)),\\
	{\mathcal{N}}_f^-=&(\frac{1}{\r^+}+\frac{1}{\r^-})^{-1}(\widetilde{\mathcal{N}}_f-\frac{1}{\r^-}({\mathcal{N}}_f^+-{\mathcal{N}}_f^-)),
\end{align*}
which implies
\begin{align*}
	\frac{1}{\r^+}\mathcal{N}^+_f\widetilde{\mathcal{N}}^{-1}_f g^-+\frac{1}{\r^-}\mathcal{N}^-_f\widetilde{\mathcal{N}}^{-1}_fg^+
	=\frac{\r^+g^++\r^-g^-}{\r^++\r^-}-\frac{1}{\r^++\r^-}(\mathcal{N}^+_f-\mathcal{N}^-_f)\widetilde{\mathcal{N}}^{-1}_f(g^+-g^-).
\end{align*}
Consequently, we can obtain
\begin{align}
	\partial_t\theta\nonumber
	=&\,\frac{1}{\r^+}\mathcal{N}^+_f\underline{p}^+-g^+=\frac{1}{\r^+}\mathcal{N}^+_f\widetilde{\mathcal{N}}^{-1}_f(g^+-g^-)-g^+\\\nonumber
	=&-\frac{1}{\r^+}\mathcal{N}^+_f\widetilde{\mathcal{N}}^{-1}_f g^--\frac{1}{\r^-}\mathcal{N}^-_f\widetilde{\mathcal{N}}^{-1}_f g^+\\\nonumber
	=&-\frac{\r^+g^++\r^-g^-}{\r^++\r^-}+\frac{1}{\r^++\r^-}(\mathcal{N}^+_f-\mathcal{N}^-_f)\widetilde{\mathcal{N}}^{-1}_f(g^+-g^-)\\\nonumber
	=&-\frac{2}{\r^++\r^-}
	\big((\r^+\underline{u}^+_1+\r^-\underline{u}^-_1)\partial_1\theta+(\r^+\underline{u}^+_2+\r^-\underline{u}^-_2)\partial_2\theta\big)\\\nonumber
	&-\frac{1}{\r^++\r^-}\sum^2_{s,r=1}\big(\r^+\underline{u}_s^+\underline{u}^+_r- \rho^+\sum^3_{j=1}\underline{F} _{sj}^+\underline{F} _{rj}^+\big)\partial_s\partial_rf\\\nonumber
	&-\frac{1}{\r^++\r^-}\sum^2_{s,r=1}\big(\r^-\underline{u}^-_s\underline{u}^-_r-\rho^-\sum^3_{j=1} \underline{F} _{sj}^-\underline{F} _{rj}^-\big)\partial_s\partial_rf\\\nonumber
	&+\frac{1}{\r^++\r^-}(\mathcal{N}^+_f-\mathcal{N}^-_f)\widetilde{\mathcal{N}}^{-1}_f\mathcal P\big(\sum^2_{s,r=1}\big(\underline{u}_s^+\underline{u}^+_r-\sum^3_{j=1}\underline{F} _{sj}^+\underline{F} _{rj}^+\big)\partial_s\partial_rf\big)\\\nonumber
	&-\frac{1}{\r^++\r^-}(\mathcal{N}^+_f-\mathcal{N}^-_f)\widetilde{\mathcal{N}}^{-1}_f\mathcal P\big(\sum^2_{s,r=1}\big(\underline{u}^-_s\underline{u}^-_r-\sum^3_{j=1}\underline{F} _{sj}^-\underline{F} _{rj}^-\big)\partial_s\partial_rf\big)\\\nonumber
	&+\frac{1}{\r^++\r^-}(\mathcal{N}^+_f-\mathcal{N}^-_f)\widetilde{\mathcal{N}}^{-1}_f\mathcal P
	\big((\underline{u}^+_1-\underline{u}^-_1)\partial_1\theta+(\underline{u}^+_2-\underline{u}^-_2)\partial_2\theta\big)\\\nonumber
	&+\frac{1}{\r^++\r^-}N_f\cdot\underline{\nabla(\r^+p_{\vup, \vup}-\rho^+\sum^3_{j=1}p_{\vF^+_j, \vF^+_j})}\\\nonumber
	&+\frac{1}{\r^++\r^-}N_f\cdot\underline{\nabla(\r^-p_{\vum,\vum}-\rho^-\sum^3_{j=1}p_{\vF^-_j, \vF^-_j})}\\\nonumber
	&-\frac{1}{\r^++\r^-}(\mathcal{N}^+_f-\mathcal{N}^-_f)\widetilde{\mathcal{N}}^{-1}_f\mathcal P N_f\cdot\underline{\nabla\big(p_{\vup, \vup}-\sum^3_{j=1}p_{\vF^+_j, \vF^+_j}-p_{\vum,\vum}+\sum^3_{j=1}p_{\vF^-_j, \vF^-_j}\big)}.\\\label{eq:theta}
\end{align}
Here $\mathcal P:L^2(\bbT^2)\to L^2(\bbT^2)$ is a projection operator defined by
\begin{align*}
	\mathcal Pg=g-\langle g\rangle	
\end{align*}
with $\langle g\rangle \eqdefa\int_{\bbT^2}gdx'$.

We can apply the operator $\mathcal P$ to some of the terms in (\ref{eq:theta}) for the same reasons
as in \cite{SWZ1}, because it does not change the formulation of this system owing to $\mathcal P g^\pm=g^\pm$.

\subsection{Equations for the vorticity and the curl of deformation tensor}
We  derive the equations for
\begin{align*}
	\vom^\pm=\nabla\times \vu^\pm,\qquad \vG_j^\pm=\nabla\times \vF^\pm_j.
\end{align*}
It is direct to obtain from  (\ref{els}) that $(\vom^\pm,\vG_j^\pm)$ satisfies
\begin{equation}\label{eq.vor}
  \left\{
  	\begin{array}{ll}
  		\pa_t\vom^\pm+\vu^\pm\cdot\nabla\vom^\pm-\sum\limits^3_{j=1}\vF^\pm_j\cdot\nabla \vG^\pm_j=\vom^\pm\cdot\nabla \vu^\pm-\sum\limits^3_{j=1}\vG^\pm_j\cdot\nabla \vF^\pm_j&\text{in}\quad \Om_t;\\
  		\pa_t \vG^\pm_j+\vu^\pm\cdot\nabla \vG^\pm_j-\vF^\pm_j\cdot\nabla\vom^\pm=\vG^\pm_j\cdot\nabla \vu^\pm-\vom^\pm\cdot\nabla \vF^\pm_j-2\sum\limits^3_{i=1}\nabla u^\pm_i\times \nabla F^\pm_{ij}&\text{in}\quad \Om_t.
  	\end{array}
  \right.
\end{equation}

\subsection{The evolution of tangential parts of $\vu$ and $\vF_j$ on top and bottom boundaries}
As in \cite{SWZ1}, we derive the evolution of
\begin{align*}
	\b^\pm_i=\int_{\bbT^2}u^\pm_i(t,x',\pm1)dx',\quad \gamma^\pm_{ij}(t)=\int_{\bbT^2}F_{ij}^\pm(t,x',\pm1)dx'\quad \text{for } i=1,2\text{ and } j=1,2,3.
\end{align*}
As $u^\pm_3(t,x',\pm1)\equiv0$, we deduce that for $i=1,2$
\begin{align*}
	\pa_t u^\pm_i+u^\pm_s\pa_s u^\pm_i-\sum^3_{j=1}F^\pm_{sj}\pa_s F^\pm_{ij}-\pa_i p^\pm=0 \qquad \text{on} ~ \Gamma^\pm.
\end{align*}
Consequently, one has
\begin{align*}
	\pa_t\b^\pm_i+\int_{\Gamma^\pm}\big(u^\pm_s\pa_s u^\pm_i-\sum^3_{s,j=1}F^\pm_{sj}\pa_s F^\pm_{ij}\big)dx'=0,
\end{align*}
or equivalently
\begin{align*}
	\b^\pm_i(t)=\b^\pm_i(0)-\int^t_0\int_{\Gamma^\pm}\big(u^\pm_s\pa_s u^\pm_i-\sum^3_{j=1}F^\pm_{sj}\pa_s F^\pm_{ij}\big)dx'd\tau.
\end{align*}
Similarly, we have
\begin{align*}
	\gamma^\pm_{ij}(t)=\gamma^\pm_{ij}(0)-\int^t_0\int_{\Gamma^\pm}\big(u^\pm_s\pa_s F^\pm_{ij}-F^\pm_{sj}\pa_s u_i\big)dx'd\tau.
\end{align*}

\subsection{Solvability conditions of Div-Curl system}

To recover the divergence-free velocity field or deformation tensor field from its curl part, we solve the following div-curl system:
\begin{equation}\label{div-curl-temp}
  \left\{
  	\begin{array}{ll}
  		\curl \vu^\pm=\om^\pm,\quad \div\vu^\pm=g^\pm\quad\text{in}\quad \Omega_f^\pm,\\
		\vu^\pm\cdot\vN_f =\theta^\pm\quad\text{on}\quad\Gamma_{f}, \\
		\vu^\pm\cdot\ve_3 = 0 \text{ on }\Gamma^{\pm}, \quad \int_{\Gamma^\pm} u_i^\pm dx'=\beta^\pm_i (i=1,2).&
  	\end{array}
  \right.
\end{equation}
The solvability of the above system was obtained in \cite{SWZ1} under the following  compatibility conditions:
\begin{itemize}
\item[C1.]  $\div\vom^\pm=0$\, in $\Omega_f^\pm$,
\item[C2.]  $\int_{\Gamma^\pm}\vom_3^\pm dx'=0$,
\item[C3.]  $\int_{\bbT^2}\theta dx'=\mp\int_{\Gamma^\pm} g^\pm dx'$,
\end{itemize}
and the main result are stated in Proposition \ref{prop:div-curl}.

\section{Uniform estimates for the linearized system}

In this section, we will present the uniform energy estimates for the linearized system around given functions $(f,\vu^\pm,\vF^\pm)$.
We assume that there exists $T>0$ for any $t\in [0,T]$:
\begin{align}
	&\|(\vu^\pm, \vF^\pm)(t)\|_{L^{\infty}(\Gamma_f)}\le L_0,\label{ass:regularity}\\
	&\|f(t)\|_{H^{s+\frac{1}{2}}(\bbT^2)}+\|\pa_tf(t)\|_{H^{s-\f12}(\bbT^2)}+\|\vu^\pm(t)\|_{H^{s}(\Omega_f^\pm)}
	+\|\vF^\pm(t)\|_{H^{s}(\Omega_f^\pm)}\le L_1,\\
	&\|(\partial_t\vu^\pm, \partial_t\vh^\pm)(t)\|_{L^{\infty}(\Gamma_f)}\le L_2,\\
	&\|f(t)-f_*\|_{H^{s- \frac{1}{2}}}\le \delta_0,\\
	&-(1-c_0)\le f(t,x')\le (1-c_0),\\
	&\Lambda(\vF^\pm, \vv)(t)\ge c_0,\label{ass:stability}
\end{align}
 together with
\begin{equation}\label{ass:boun}
  \left\{
  	\begin{array}{ll}
  		\div\vu^\pm=\div\vF_j^\pm=0\quad&\text{in}\quad\Omega_f^\pm,\\
		{\vF_j}^\pm\cdot\vN_f=0&\text{on}\quad\Gamma_f,\\
		\partial_tf=\underline{\vu}^\pm\cdot\vN_f,\quad &\text{on}\quad\Gamma_f,\\
		u_3^\pm=F_{3j}^\pm=0\quad&\text{on}\quad \Gamma^\pm.
  	\end{array}
  \right.
\end{equation}
Here $L_0, L_1, L_2,c_0,\delta_0$ are positive constants.

\subsection{The linearized system for the height function of the interface}

For the system (\ref{eq:form:f}) and (\ref{eq:theta}), we introduce the following linearized system:
\begin{equation}\label{sys:linear-H}
  \left\{
  	\begin{array}{l}
  		\pa_t\bar f=\bar\theta;\\
  		\partial_t\bar\theta=-\frac{2}{\r^++\r^-}
		\big((\r^+\underline{u}^+_1+\r^-\underline{u} ^-_1)\partial_1\bar\theta+(\r^+\underline{u} ^+_2+\r^-\underline{u} ^-_2)\partial_2\bar\theta\big)\\
		\quad-\frac{1}{\r^++\r^-}\sum\limits^2_{s,r=1}\big(\r^+\underline{u} _s^+\underline{u} ^+_r- \rho^+\sum\limits^3_{j=1}\underline{F} _{sj}^+\underline{F} _{rj}^+\big)\partial_s\partial_r\bar f\\
		\quad-\frac{1}{\r^++\r^-}\sum\limits^2_{s,r=1}\big(\r^-\underline{u} ^-_s\underline{u} ^-_r-\rho^-\sum\limits^3_{j=1} \underline{F} _{sj}^-\underline{F} _{rj}^-\big)\partial_s\partial_r\bar f+\mathfrak g,\\
  	\end{array}
  \right.
\end{equation}
where
\begin{align}\label{eq:g-def}
	\mathfrak g=&\frac{1}{\r^++\r^-}(\mathcal{N}^+_f-\mathcal{N}^-_f)\widetilde{\mathcal{N}}^{-1}_f\mathcal P\big(\sum^2_{s,r=1}\big(\underline{u} _s^+\underline{u} ^+_r-\underline{u} ^-_s\underline{u} ^-_r+\sum^3_{j=1}
(\underline{F} _{sj}^-\underline{F} _{rj}^--\underline{F} _{sj}^+\underline{F} _{rj}^+)\big)\partial_s\partial_rf\big)\\\nonumber
	&+\frac{1}{\r^++\r^-}(\mathcal{N}^+_f-\mathcal{N}^-_f)\widetilde{\mathcal{N}}^{-1}_f\mathcal P
	\big((\underline{u}^+_1-\underline{u} ^-_1)\partial_1\theta+(\underline{u} ^+_2-\underline{u} ^-_2)\partial_2\theta\big)\\\nonumber
	&+\frac{\rho^+}{\r^++\r^-}N_f\cdot\underline{\nabla(p_{\vup, \vup}-\sum^3_{j=1}p_{\vF^+_j, \vF^+_j})}+\frac{\rho^-}{\r^++\r^-}N_f\cdot\underline{\nabla(p_{\vum,\vum}-\sum^3_{j=1}p_{\vF^-_j, \vF^-_j})}\\\nonumber
	&-\frac{1}{\r^++\r^-}(\mathcal{N}^+_f-\mathcal{N}^-_f)\widetilde{\mathcal{N}}^{-1}_f\mathcal P N_f\cdot\underline{\nabla\big(p_{\vup, \vup}-\sum^3_{j=1}p_{\vF^+_j, \vF^+_j}-p_{\vum,\vum}+\sum^3_{j=1}p_{\vF^-_j, \vF^-_j}\big)}\\\nonumber
	\triangleq&g_1+g_2+g_3+g_4.
\end{align}
Here we need to be careful that $\int_{\bbT^2}\bar\theta dx'$ may not equal to 0.
\begin{remark}
  Let $D_t=\pa_t+w_1\pa_1+w_2\pa_2$. Thus, we have
\begin{align*}
	D_t^2\bar f=\sum\limits^2_{s,r=1}\Big(-v_sv_r+\frac{\rho^+}{\r^++\r^-}\sum\limits^3_{j=1}F^+_{sj}F^+_{rj}+\frac{\rho^-}{\r^++\r^-}\sum\limits^3_{j=1}F^-_{sj}F^-_{rj}\Big)\pa_s\pa_r \bar f+\text{low order terms}.
\end{align*}
The principal symbol of the operator on the right-hand side is
\begin{align}
	(v_i\xi_i)^2- \Big(\frac{\rho^+}{\r^++\r^-}\sum\limits^3_{j=1}(F^+_{ij}\xi_i)^2+\frac{\rho^-}{\r^++\r^-}\sum\limits^3_{j=1}(F^-_{ij}\xi_i)^2\Big).
\end{align}
The negativity of this symbol is ensured by the stability condition (\ref{ass:stability})(see (\ref{condition:s2})).
Therefore, $f$ satisfies a strictly hyperbolic equation, and thus the system should be linearly well-posed.
\end{remark}

Define the energy functional $E_s$ as
\begin{align}\nonumber
	E_s(\partial_t\bar{f},\bar{f})\eqdefa &\big\|(\partial_t+w_i\partial_i)\Ds\bar{f}\big\|_{L^2}^2
	-\big\|v_i\partial_i\Ds\bar{f}\big\|_{L^2}^2\\
	&+\frac{\rho^+}{\r^++\r^-}\sum\limits^3_{j=1}\big\|\underline{F} _{ij}^+\partial_i\Ds\bar{f}\big\|_{L^2}^2
	+\frac{\rho^-}{\r^++\r^-}\sum\limits^3_{j=1}\big\|\underline{F} _{ij}^-\partial_i\Ds\bar{f}\big\|_{L^2}^2,
\end{align}
where $\langle \na\rangle^s f=\mathcal{F}^{-1}((1+|\xi|^2)^{\f s 2}\widehat{f})$ and
\begin{align*}
	w_i=\frac{1}{\r^++\r^-}(\r^+\underline{u} ^+_i+\r^-\underline{u} ^-_i),\qquad v_i=\frac{\sqrt{\r^+\r^-}}{\r^++\r^-}(\underline{u} ^+_i-\underline{u} ^-_i).
\end{align*}
Obviously, there exists $C(L_0)>0$ so that
\begin{align}\label{linear:equi-norm-1}
	E_s(\partial_t\bar{f},\bar{f})\le C(L_0)\left(\|\partial_t\bar{f}\|_{H^{s-\f12}}^2
	+\|\bar{f}\|_{H^{s+\f12}}^2\right).
\end{align}
In addition, we deduce from the stability condition (\ref{ass:stability}) that there exists $C(c_0,L_0)$ so that
\begin{align}\label{linear:equi-norm-2}
	&\|\partial_t \bar{f}\|_{H^{s-\f12}}^2+\|\bar{f}\|_{H^{s+\f12}}^2
	\le C(c_0,L_0)\Big\{E_s(\partial_t\bar{f},\bar{f})+\|\partial_t\bar{f}\|_{L^2}^2+\|\bar{f}\|_{L^2}^2\Big\}.
\end{align}

Firstly, we have the estimate of $\mathfrak{g}$ defined by (\ref{eq:g-def}).
\begin{lemma}\label{lem:non-g}
	It holds that
	\begin{align*}
		\|\mathfrak{g}\|_{H^{s-\f12}}\le C(L_1).
	\end{align*}
\end{lemma}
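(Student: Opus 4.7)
The plan is to decompose $\mathfrak{g}=g_1+g_2+g_3+g_4$ as in (\ref{eq:g-def}) and bound each piece in $H^{s-\f12}(\bbT^2)$ separately, using elliptic regularity for the auxiliary pressure equation (\ref{eqp}), the Sobolev multiplication and trace estimates from Lemma \ref{lem:basic}, and the mapping properties of the Dirichlet--Neumann operators from Lemma \ref{lem:DN}. All the traces $\underline{\vu}^\pm,\underline{\vF}^\pm$ on $\Gamma_f$ lie in $H^{s-\f12}(\bbT^2)$ by the trace theorem applied to $\vu^\pm,\vF^\pm\in H^s(\Omega_f^\pm)$, and $\vN_f\in H^{s-\f12}(\bbT^2)$ since $f\in H^{s+\f12}(\bbT^2)$; these facts will be used repeatedly, together with the algebra property $H^{s-\f12}\cdot H^{s-\f12}\hookrightarrow H^{s-\f12}$ valid for $s\ge3$.

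First I would handle $g_3$, which is the simplest: since $\nabla\vu^\pm,\nabla\vF_j^\pm\in H^{s-1}(\Omega_f^\pm)$ and $s-1\ge 2>\f{3}{2}$, the source terms $\mathrm{tr}(\nabla\vu_1^\pm\nabla\vu_2^\pm)$ in (\ref{eqp}) lie in $H^{s-1}(\Omega_f^\pm)$. Elliptic regularity for the mixed Dirichlet/Neumann Poisson problem on a domain with $H^{s+\f12}$-boundary then gives $p_{\vu_1^\pm,\vu_2^\pm}\in H^{s+1}(\Omega_f^\pm)$, so $\nabla p$ has trace in $H^{s-\f12}(\bbT^2)$, and multiplication by $\vN_f$ keeps us in $H^{s-\f12}$. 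For $g_4$, the same input is now fed through $(\mathcal{N}_f^+-\mathcal{N}_f^-)\widetilde{\mathcal{N}}_f^{-1}\mathcal P$: since the argument is already in $H^{s-\f12}_0$, by Lemma \ref{lem:DN}(3) the operator $\widetilde{\mathcal{N}}_f^{-1}$ sends it into $H^{s+\f12}_0$, and then $\mathcal{N}_f^\pm$ separately map back into $H^{s-\f12}_0$, which is sufficient for $g_4$; no cancellation is needed here.

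The delicate terms are $g_1$ and $g_2$, whose innermost arguments involve $\pa_s\pa_r f\in H^{s-\f32}$ (for $g_1$) and $\pa_i\theta\in H^{s-\f32}$ (for $g_2$, using $\theta=\pa_tf\in H^{s-\f12}$). After multiplying by products of $H^{s-\f12}$-traces, both arguments lie in $H^{s-\f32}(\bbT^2)$. Applying $\widetilde{\mathcal{N}}_f^{-1}\mathcal P$ produces a function in $H^{s-\f12}_0$, but a naive application of $\mathcal{N}_f^\pm$ by Lemma \ref{lem:DN}(3) would only yield $H^{s-\f32}$, which falls short of what is needed. The main obstacle is therefore to exploit the cancellation built into the difference $\mathcal{N}_f^+-\mathcal{N}_f^-$: both Dirichlet--Neumann operators, as pseudodifferential operators with coefficients controlled by $f\in H^{s+\f12}$, have the same principal symbol $\sqrt{(1+|\nabla f|^2)|\xi|^2-(\xi\cdot\nabla f)^2}$, so the difference is of order zero, i.e.\ it is bounded on $H^{s-\f12}(\bbT^2)$ with operator norm controlled by $\|f\|_{H^{s+\f12}}$. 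This smoothing property is proved in \cite{SWZ1} in exactly the setting needed here, and invoking it closes the estimates for $g_1$ and $g_2$.

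Combining the four bounds and using the algebra property together with the trace theorem, one obtains $\|\mathfrak g\|_{H^{s-\f12}}\le C\big(\|f\|_{H^{s+\f12}},\|\pa_tf\|_{H^{s-\f12}},\|\vu^\pm\|_{H^s(\Omega_f^\pm)},\|\vF^\pm\|_{H^s(\Omega_f^\pm)}\big)\le C(L_1)$, as claimed.
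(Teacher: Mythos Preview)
Your proof is correct and follows essentially the same route as the paper: decompose $\mathfrak g=g_1+g_2+g_3+g_4$, handle $g_3,g_4$ via elliptic regularity for (\ref{eqp}) together with the order-one mapping property of $\mathcal N_f^\pm$ and its inverse, and handle $g_1,g_2$ by exploiting that $\mathcal N_f^+-\mathcal N_f^-$ is of order zero so that the composite $(\mathcal N_f^+-\mathcal N_f^-)\widetilde{\mathcal N}_f^{-1}$ gains one derivative. One minor point: for the quantitative operator bounds (with constants depending on $\|f\|_{H^{s+1/2}}$) you should cite Propositions~\ref{prop:DN-Hs} and~\ref{prop:DN-inverse} in the appendix rather than Lemma~\ref{lem:DN}(3), which only records the bijectivity; the paper's own proof does exactly this.
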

\begin{proof}
The proof is similar to Lemma 6.2 in \cite{SWZ1}. By using Proposition \ref{prop:DN-Hs} and Proposition \ref{prop:DN-inverse},
we have
\begin{align*}
\|\mathfrak{g}_1\|_{H^{s-\f12}}\le& C(L_1)\left\|(\underline{u}_i^+\underline{u}^+_j-\underline{F}^+_i\underline{F}^+_j
-\underline{u}^-_i\underline{u}^-_j+\underline{F}^-_i\underline{F}^-_j)\partial_i\partial_jf
\right\|_{H^{s-\f32}}\\
\le& C(L_1)\|(\underline{\vu}^\pm,\underline{\vF}^\pm)\|_{H^{s-\f32}}\|f\|_{H^{s+\f12}}\le C(L_1),\\
\|\mathfrak{g}_2\|_{H^{s-\f12}}
\le& C(L_1)\|\underline{\vu}^\pm\|_{H^{s-\f32}}\|\theta\|_{H^{s-\f12}}\le C(L_1),\\
\|(\mathfrak{g}_3, \mathfrak{g}_4)\|_{H^{s-\f12}}
\le& C(L_1)\Big(\|\underline{\nabla(p_{\vum,\vum}-\sum^3_{j=1}p_{\vF^-_j, \vF^-_j})}\|_{H^{s-\f12}}+\|\underline{\nabla(p_{\vup, \vup}-\sum^3_{j=1}p_{\vF^+_j, \vF^+_j})}\|_{H^{s-\f12}}\Big)\\
\le& C(L_1)\Big(\big\|\nabla(p_{\vum,\vum}, \sum^3_{j=1}p_{\vF^-_j, \vF^-_j})\big\|_{H^{s}(\Om_f^-)}
+\big\|\nabla(p_{\vup, \vup}, \sum^3_{j=1}p_{\vF^+_j, \vF^+_j})\big\|_{H^{s}(\Om_f^+)}\Big)\\
\le& C(L_1)\|(\vu^\pm,\vF^\pm)\|_{H^s(\Om_f^\pm)}\le C(L_1).
\end{align*}
The proof is finished.
\end{proof}

Then we have the following estimate.

\begin{proposition}\label{prop:f-L}
	Assume that $\mathfrak{g}\in L^\infty(0,T;H^{s- \frac{1}{2}}(\bbT^2))$. Given the initial data $(\bar \theta_0, \bar f_0)\in H^{s- \frac{1}{2}}\times H^{s+\frac{1}{2}}(\bbT^2)$, there exists a unique solution $(\bar f,\bar \theta)\in C\big([0,T];H^{s+\frac{1}{2}}\times H^{s- \frac{1}{2}}(\bbT^2)\big)$ to the system (\ref{sys:linear-H}) so that
	\begin{align*}
		&\sup_{t\in[0,T]}\left(\|\partial_t\bar{f}(t)\|_{H^{s- \frac{1}{2}}}^2+\|\bar{f}(t)\|_{H^{s+\frac{1}{2}}}^2\right)\\
		&\quad\le C(c_0,L_0)\left(\|\bar\theta_0\|_{H^{s- \frac{1}{2}}}^2+\|\bar f_0\|_{H^{s+\frac{1}{2}}}^2+\int_0^T\|\mathfrak{g}(\tau)\|_{H^{s- \frac{1}{2}}}d\tau\right)e^{C(c_0, L_1,L_2)T}.
	 \end{align*}
\end{proposition}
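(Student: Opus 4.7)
The plan is to exploit the strictly hyperbolic structure emphasized in the remark following (\ref{sys:linear-H}). Introducing $D_t = \partial_t + w_i\partial_i$, a direct computation from (\ref{sys:linear-H}) shows
\begin{align*}
D_t^2\bar f = M_{sr}\partial_s\partial_r\bar f + \mathfrak{g} + \text{(lower order)},
\end{align*}
where $M_{sr} = -v_sv_r + \frac{1}{\rho^++\rho^-}\bigl(\rho^+\sum_{j=1}^3\underline{F}^+_{sj}\underline{F}^+_{rj} + \rho^-\sum_{j=1}^3\underline{F}^-_{sj}\underline{F}^-_{rj}\bigr)$ is positive definite thanks to the stability assumption (\ref{ass:stability})/(\ref{condition:s2}). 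The energy $E_s$ is precisely the natural quadratic form associated with this hyperbolic operator, and the strategy is to apply $\Ds$ to (\ref{sys:linear-H}), do an energy estimate on $\Psi \eqdef \Ds\bar f$, and close via Gronwall. Existence will then be obtained by a standard Friedrichs mollification of the coefficients and passing to the limit using the same estimate; uniqueness follows from linearity and an $L^2$-level version of the estimate.

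The main computation is $\frac{d}{dt}E_s$. For the kinetic term I would use $\frac{d}{dt}\|D_t\Psi\|_{L^2}^2 = 2(D_t^2\Psi,D_t\Psi) + (\div w\,D_t\Psi,D_t\Psi)$ after integrating by parts the $w_i\partial_i$ acting from the left. For each potential-type term I would differentiate in $t$ and integrate by parts once; for instance, $\frac{d}{dt}(v_iv_j\partial_i\Psi,\partial_j\Psi) = (\partial_t(v_iv_j)\partial_i\Psi,\partial_j\Psi) - 2(v_iv_j\partial_s\partial_r\Psi,\partial_t\Psi) + \text{(boundary}=0\text{)}$, and split $\partial_t\Psi = D_t\Psi - w_k\partial_k\Psi$. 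Doing the analogous manipulation for the two $\underline{F}^\pm$ quadratic forms and summing, the top-order terms $2(M_{sr}\partial_s\partial_r\Psi,D_t\Psi)$ exactly cancel $2(D_t^2\Psi,D_t\Psi)$ modulo commutators $[\Ds,M_{sr}\partial_s\partial_r]\bar f$ and $[\Ds,w_i\partial_i]\bar\theta$, together with terms containing $D_t$ of the coefficients $v,w,\underline F^\pm$. The source contribution gives $2(\Ds\mathfrak g,D_t\Psi) \le \|\mathfrak g\|_{H^{s-1/2}}E_s^{1/2}$, and by Lemma \ref{lem:non-g} this is bounded by $C(L_1)E_s^{1/2}$.

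The main obstacle is keeping the highest-order commutators under control using only the assumed regularity of the coefficients. The Kato--Ponce commutator estimate gives $\|[\Ds,M_{sr}]\partial_s\partial_r\bar f\|_{L^2} \lesssim \|\nabla M_{sr}\|_{L^\infty}\|\bar f\|_{H^{s+1/2}} + \|M_{sr}\|_{H^{s-1/2}}\|\nabla^2\bar f\|_{L^\infty}$ (and similar for $[\Ds,w_i]\partial_i\bar\theta$). Since $s\ge 3$, trace in $\Om^\pm_f$ with (\ref{ass:regularity})--(\ref{ass:stability}) controls $\|(\underline{\vu}^\pm,\underline{\vF}^\pm)\|_{H^{s-1/2}\cap W^{1,\infty}}$ by $C(L_1)$, so these commutators are bounded by $C(L_1)(\|\bar f\|_{H^{s+1/2}}+\|\bar\theta\|_{H^{s-1/2}})$; similarly, the $D_t$-of-coefficient terms are controlled by $C(L_1,L_2)$ times the same norms. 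A second subtlety is that $\langle\bar\theta\rangle \eqdef \int_{\bbT^2}\bar\theta\,dx'$ need not vanish (as flagged in the paper), so the equivalence (\ref{linear:equi-norm-2}) only controls everything up to $\|\bar\theta\|_{L^2}+\|\bar f\|_{L^2}$. I handle this by integrating the $\bar\theta$-equation of (\ref{sys:linear-H}) over $\bbT^2$ to get an ODE for $\langle\bar\theta\rangle$ (and $\langle\bar f\rangle$) with right-hand side bounded by $\|\mathfrak g\|_{L^2} + C(L_1)\|(\bar f,\bar\theta)\|_{L^2}$, so these means are controlled separately by the data and $\int_0^T\|\mathfrak g\|_{H^{s-1/2}}d\tau$.

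Putting everything together yields
\begin{align*}
\tfrac{d}{dt}E_s(\partial_t\bar f,\bar f) \le C(L_1,L_2)\bigl(E_s + \|(\bar f,\bar\theta)\|_{L^2}^2\bigr) + C\|\mathfrak g\|_{H^{s-1/2}}^2,
\end{align*}
which, after combining with the mean-value ODE and invoking the norm equivalences (\ref{linear:equi-norm-1})--(\ref{linear:equi-norm-2}), gives the claimed estimate by Gronwall's inequality. The same a priori bound drives a standard Friedrichs-type approximation procedure (smooth out the coefficients and forcing, solve the regularized linear system, pass to the limit) to give existence in $C([0,T];H^{s+1/2}\times H^{s-1/2})$, and uniqueness follows from the corresponding $L^2$-level estimate applied to the difference of two solutions.
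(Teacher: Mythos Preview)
Your proposal is correct and follows essentially the same route as the paper: differentiate $E_s$ in time, use integration by parts so that the top-order pieces cancel against the potential terms, control the commutators $[\Ds,w_i]\partial_i\bar\theta$ and $[\Ds,M_{sr}]\partial_s\partial_r\bar f$ (the paper does this via Lemma~\ref{lem:commutator}, you via Kato--Ponce), add a low-order $L^2$ estimate to close (\ref{linear:equi-norm-2}), and finish with Gronwall. The only cosmetic differences are that the paper handles the low-frequency part by a direct $\tfrac{d}{dt}\bigl(\|\partial_t\bar f\|_{L^2}^2+\|\bar f\|_{L^2}^2\bigr)$ estimate rather than your mean-value ODE, and that your appeal to Lemma~\ref{lem:non-g} is unnecessary since the hypothesis $\mathfrak g\in L^\infty_tH^{s-1/2}$ is already assumed.
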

\begin{proof}
It suffices to prove the uniform estimates. 

From the equation (\ref{sys:linear-H}), we obtain
	\begin{align*}
		\pa^2_t\bar f=&-2(w_1\partial_1\bar\theta+w_2\partial_2\bar\theta)+\sum_{s,t=1,2}(-w_sw_t-v_sv_t)\partial_s\partial_t\bar f\\
		&+\sum^2_{s,r=1}(\frac{\rho^+}{\r^++\r^-}\sum^3_{j=1}\underline{F} _{sj}^+\underline{F} _{rj}^++\frac{\rho^-}{\r^++\r^-}\sum^3_{j=1}\underline{F} _{sj}^-\underline{F} _{rj}^-)\partial_s\partial_r\bar f+\mathfrak g,
	\end{align*}
which yields that
\begin{align*}
		\frac{1}{2}\frac{d}{dt}&\big\|(\partial_t+w_i\partial_i)\Ds\bar{f}\big\|_{L^2(\bbT^2)}^2\\
		=& \Big\langle(\partial_t+w_i\partial_i)\Ds\bar{f}, \Ds\partial_{t}^2\bar{f}+w_i\partial_i(\Ds\partial_t\bar{f})
		+\partial_tw_i\partial_i\Ds\bar{f}\Big\rangle\\
		=& \Big\langle(\partial_t+w_i\partial_i)\Ds\bar{f},\Ds(-2w_i\partial_i\pa_t\bar f+\sum^2_{s,r=1}(-w_sw_r-v_sv_r)\partial_s\partial_r\bar f\\
		&+\sum^2_{s,r=1}(\frac{\rho^+}{\r^++\r^-}\sum^3_{j=1}\underline{F} _{sj}^+\underline{F} _{rj}^++\frac{\rho^-}{\r^++\r^-}\sum^3_{j=1}\underline{F} _{sj}^-\underline{F} _{rj}^-)\partial_s\partial_r\bar f)\Big\rangle\\
		&+\big\langle(\partial_t+w_i\partial_i)\Ds\bar{f},\Ds\mathfrak g+w_i\pa_i(\Ds\pa_t\bar f)+\pa_tw_i\pa_i\Ds\bar f\big\rangle\\
		=&\big\langle(\partial_t+w_i\partial_i)\Ds\bar{f}, -w_i\partial_i\Ds\partial_t\bar{f}\big\rangle\\
		&+\big\langle(\partial_t+w_i\partial_i)\Ds\bar{f},\sum^2_{s,r=1}(-w_sw_r-v_sv_r)\partial_s\partial_r\Ds\bar f\\
		&\qquad+\sum^2_{s,r=1}(\frac{\rho^+}{\r^++\r^-}\sum^3_{j=1}\underline{F} _{sj}^+\underline{F} _{rj}^++\frac{\rho^-}{\r^++\r^-}\sum^3_{j=1}\underline{F} _{sj}^-\underline{F} _{rj}^-)\partial_s\partial_r\Ds\bar f)\big\rangle\\
		&+2\big\langle (\partial_t+w_i\partial_i)\Ds\bar{f},[w_i,\Ds]\pa_i\pa_t\bar f\big\rangle\\
		&+\big\langle (\partial_t+w_i\partial_i)\Ds\bar{f},\\
		&\qquad \big[w_sw_r+v_sv_r -\frac{\rho^+}{\r^++\r^-}\sum^3_{j=1}\underline{F} _{sj}^+\underline{F} _{rj}^+-\frac{\rho^-}{\r^++\r^-}\sum^3_{j=1}\underline{F} _{sj}^-\underline{F} _{rj}^-,\Ds\big]\pa_s\pa_r\bar f \big\rangle \\
		&+\big\langle (\partial_t+w_i\partial_i)\Ds\bar{f},\Ds\mathfrak g+\pa_t w_i\pa_i\Ds\bar f \big\rangle\\
		\triangleq& I_1+\cdots I_5.
	\end{align*}	
From Lemma \ref{lem:commutator}, one has
 	\begin{align*}
		I_3\le& 2\|(\partial_t+w_i\partial_i)\Ds\bar{f}\|_{L^2}\big\|\big[w_i, \Ds\big]\partial_i\partial_t\bar{f}\big\|_{L^2} \\
		\le& CE_s(\partial_t\bar{f},\bar{f})^\f12\|w\|_{H^{s-\f12}}\|\pa_t\bar f\|_{H^{s-\f12}},
	\end{align*}
and
	\begin{align*}
		I_4\le CE_s(\partial_t\bar{f},\bar{f})^{\frac{1}{2}}\Big(\|w\|_{H^{s-\frac{1}{2}}}^2+\|v\|_{H^{s-\frac{1}{2}}}^2+\|\underline{\vF}^\pm\|_{H^{s-\frac{1}{2}}}^2\Big)\|\bar f\|_{H^{s+\frac{1}{2}}}.
	\end{align*}
In addition, it holds that
	\begin{equation}\nonumber
		I_5\le E_s(\partial_t\bar{f},\bar{f})^\f12\big(\|\mathfrak{g}\|_{H^{s-\f12}}+\|\partial_tw\|_{L^\infty}\|\bar f\|_{H^{s+\f12}}\big).
	\end{equation}
It follows from integration by parts that
 	\begin{align*}
 		&\Big\langle\partial_t\Ds\bar{f}, ~-w_i\partial_i\Ds\partial_t\bar{f}\Big\rangle
		\le \|\partial_iw_i\|_{L^\infty}\|\partial_t\bar{f}\|_{H^{s-\f12}}^2,\\
		&\Big\langle w_i\partial_i\Ds\bar{f}, ~-w_i\partial_i\Ds\partial_t\bar{f}\Big\rangle
		+\frac12\frac{d}{dt}\|w_i\partial_i\Ds\bar{f}\|_{L^2}^2\\
		&\quad\qquad=\Big\langle w_i\partial_i \Ds\bar{f},\partial_tw_i\partial_i\Ds\bar{f}\Big\rangle\le
		\|w\|_{L^\infty} \|\partial_tw\|_{L^\infty}\|\bar{f}\|_{H^{s+\f12}}^2,
 	\end{align*}
	which implies
	\begin{equation}
		I_1\le -\frac12\frac{d}{dt}\|w_i\partial_i\Ds\bar{f}\|_{L^2}^2+\big(1+\|w\|_{W^{1,\infty}}+\|\partial_tw\|_{L^\infty}\big)^2
		\Big(\|\bar{f}\|_{H^{s+\f12}}^2+\|\partial_t\bar{f}\|_{H^{s-\f12}}^2\Big).
	\end{equation}
To estimate $I_2$, we can derive
\begin{align*}
&\Big\langle\partial_t\Ds\bar{f}, -w_iw_j\partial_i\partial_j\Ds\bar{f}\Big\rangle
-\frac12\frac{d}{dt}\|w_i\partial_i\Ds\bar{f}\|_{L^2}^2\\
&=-\Big\langle w_i\partial_i\Ds\bar{f}, ~\partial_tw_i
\partial_i\Ds\bar{f}\Big\rangle+\Big\langle\Ds\partial_t\bar{f}, ~\partial_i(w_iw_j)\partial_j\Ds\bar{f}\Big\rangle\\
&\le\|w\|_{L^\infty}\big(\|\partial_tw\|_{L^\infty}+\|\nabla w\|_{L^\infty}\big)
\Big(\|\bar{f}\|_{H^{s+\f12}}^2+\|\partial_t\bar{f}\|_{H^{s-\f12}}^2\Big),
\end{align*}
and similarly
\begin{align*}
&\Big\langle w_k\partial_k\Ds\bar{f}, -w_iw_j\partial_i\partial_j\Ds\bar{f}\Big\rangle
le C \|w\|^2_{L^\infty}\|\nabla w\|_{L^\infty}\|\bar{f}\|_{H^{s+\f12}}^2,
\end{align*}
as well as
	\begin{align*}
		&\Big\langle\partial_t\Ds\bar{f}+w_k\partial_k\Ds\bar{f}, \big(\frac{\rho^+}{\r^++\r^-}\sum^3_{j=1}\underline{F} _{sj}^+\underline{F} _{rj}^++\frac{\rho^-}{\r^++\r^-}\sum^3_{j=1}\underline{F} _{sj}^-\underline{F} _{rj}^-\big)\partial_s\partial_r\Ds\bar{f}\Big\rangle\\
		&\le- \frac{1}{2}\frac{\rho^\pm}{\rho^++\rho^-}\frac{d}{dt}\sum^3_{j=1}\|\underline{F} _{ij}^\pm\pa_i\Ds\bar f\|_{L^2}^2\\\nonumber
		&\quad+C\|\underline{\vF}^\pm\|_{L^\infty}(1+\|\underline{\vF}^\pm\|_{L^\infty})
		\big(\|\partial_t\underline{\vF}^\pm\|_{L^\infty}+\|\nabla \underline{\vF}^\pm\|_{L^\infty}\big)
		\Big(\|\bar{f}\|_{H^{s+\f12}}^2+\|\partial_t\bar{f}\|_{H^{s-\f12}}^2\Big).
	\end{align*}
Therefore, we obtain
	\begin{align*}
		I_2\le& \frac12\frac{d}{dt}\|w_i\partial_i\Ds\bar{f}\|_{L^2}^2+\frac12\frac{d}{dt}\|v_i\partial_i\Ds\bar{f}\|_{L^2}^2\\
		&- \frac{1}{2}\frac{\rho^\pm}{\rho^++\rho^-} \frac{d}{dt}\sum^3_{j=1}\|\underline{F} _{ij}^\pm\pa_i\Ds\bar f\|_{L^2}^2\\
		&+C\big(1+\|(\underline{\vu}^\pm,\underline{\vF}^\pm)\|_{W^{1,\infty}}+\|\pa_t(\underline{\vu}^\pm,\underline{\vF}^\pm)\|_{L^\infty}\big)^3
		\Big(\|\bar{f}\|_{H^{s+\f12}}^2+\|\partial_t\bar{f}\|_{H^{s-\f12}}^2\Big).
	\end{align*}
Combining the estimates of $I_1,\cdots, I_5$ together yields that
	\begin{align*}
		\frac{d}{dt}E_s(&\partial_t\bar{f},\bar{f})\le \|\mathfrak{g}\|_{H^{s-\f12}}^2\\ +&C(L_0)\big(1+\|(\underline{\vu}^\pm,\underline{\vF}^\pm)\|_{H^{s-\f12}}+\|\partial_t(\underline{\vu}^\pm,\underline{\vF}^\pm)\|_{L^\infty}\big)^3\Big(\|\bar{f}\|_{H^{s+\f12}}^2+\|\partial_t\bar{f}\|_{H^{s-\f12}}^2\Big).
	\end{align*}
On the other hand, it is easy to show that
	\begin{align*}
		\frac{d}{dt}\big(\|\partial_t\bar{f}\|_{L^2}^2+\|\bar{f}\|_{L^2}^2\big)\le C(L_0)\Big(\|\bar{f}\|_{H^{s+\f12}}^2+\|\partial_t\bar{f}\|_{H^{s-\f12}}^2\Big)
		+\|\mathfrak{g}\|_{L^2}^2.
	\end{align*}

Let $\mathcal{E}(t)\triangleq\|\bar{f}(t)\|_{H^{s+\f12}}^2+\|\partial_t\bar{f}(t)\|_{H^{s-\f12}}^2$.
It follows from (\ref{linear:equi-norm-2}) that
	\begin{align*}
		\mathcal{E}(t)\le C(c_0,L_0)\Big(&\|\bar\theta_0\|_{H^{s-\f12}}^2+\|\bar f_0\|_{H^{s+\f12}}^2+\int_0^t\|\mathfrak{g}(\tau)\|_{H^{s-\f12}}^2d\tau\\
		&+\int_0^t\big(1+\|(\underline{\vu}^\pm,\underline{\vF}^\pm)(\tau)\|_{H^{s-\f12}}+\|\partial_t(\underline{\vu}^\pm, \underline{\vF}^\pm)(\tau)\|_{L^\infty}\big)^3\mathcal{E}(\tau)d\tau\Big),
	\end{align*}
which together with Lemma \ref{lem:basic} gives
	\begin{align*}
		\mathcal{E}(t)\le C(c_0,L_0)\Big(&\|\bar\theta_0\|_{H^{s-\f12}}^2+\|\bar f_0\|_{H^{s+\f12}}^2+\int_0^t\|\mathfrak{g}(\tau)\|_{H^{s-\f12}}^2d\tau+C(L_1,L_2)\int_0^t\mathcal{E}(\tau)d\tau\Big).
	\end{align*}
By using Gronwall's inequality, we conclude the desired estimate.
\end{proof}

\subsection{The linearized system of $(\vom^\pm,\vG^\pm)$}

For the vorticity system (\ref{eq.vor}), we introduce the following linearized system:
\begin{equation}\label{eq:vorticity-w-L}
  \left\{
  	\begin{array}{l}
  		\pa_t\bar\vom^\pm+\vu^\pm\cdot\nabla \bar\vom^\pm-\sum\limits^3_{j=1}\vF^\pm_j\cdot\nabla \bar{\vG}^\pm_j=\bar{\vom}^\pm\cdot\nabla \vu^\pm-\sum\limits^3_{j=1}\bar{\vG}^\pm_j\cdot\nabla \vF^\pm_j,\\
  		\pa_t\bar \vG^\pm_j+\vu^\pm\cdot\nabla \bar \vG^\pm_j-\vF^\pm_j\cdot\nabla\bar\vom^\pm=\bar \vG^\pm_j\cdot\nabla \vu^\pm-\bar\vom^\pm\cdot\nabla \vF^\pm_j-2\sum\limits^3_{s=1}\nabla u^\pm_s\times\nabla F^\pm_{sj},\\
  \bar\vom^\pm(0,x)=\bar\vom_0^\pm,\qquad \bar\vG^\pm_j(0,x)= \bar\vG_{j,0}^\pm.
  	\end{array}
  \right.
\end{equation}
We first assume the existence of solutions to (\ref{eq:vorticity-w-L}). Then it holds the following estimate.
\begin{proposition}\label{prop:vorticity}
	It holds that
	\begin{align}\label{eq:linearwg}
		\sup_{t\in[0,T]}(\|\bar\vom^\pm(t)\|^2_{H^{s-1}(\Omega^\pm_f)}&+\sum^3_{j=1}\|\bar \vG_j^\pm(t)\|^2_{H^{s-1}(\Omega^\pm_f)})\\
		&\le\Big(1+\|\bar\vom_0^\pm\|^2_{H^{s-1}(\Omega^\pm_f)}+\sum^3_{j=1}\|\bar \vG_{j,0}^\pm\|^2_{H^{s-1}(\Omega^\pm_f)}\Big)e^{C(L_1)T}.\nonumber
	\end{align}
\end{proposition}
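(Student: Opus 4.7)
The plan is to derive an energy estimate for the coupled linear transport-type system (\ref{eq:vorticity-w-L}) by exploiting its symmetric structure. The cross-coupling terms $\vF_j^\pm\cdot\nabla\bar\vG_j^\pm$ in the $\bar\vom^\pm$ equation and $\vF_j^\pm\cdot\nabla\bar\vom^\pm$ in the $\bar\vG_j^\pm$ equation look like a derivative loss at first glance, but they cancel after $L^2$ pairing. Indeed, integration by parts yields $\int_{\Omega_f^\pm}\vF_j^\pm\cdot\nabla(\bar\vom^\pm\cdot\bar\vG_j^\pm)\,dx=\oint_{\partial\Omega_f^\pm}(\vF_j^\pm\cdot\vn)(\bar\vom^\pm\cdot\bar\vG_j^\pm)\,dS-\int_{\Omega_f^\pm}(\div\vF_j^\pm)(\bar\vom^\pm\cdot\bar\vG_j^\pm)\,dx$, and both terms vanish: the boundary piece uses $\vF_j^\pm\cdot\vN_f=0$ on $\Gamma_f$ together with $F_{3j}^\pm=0$ on $\Gamma^\pm$, while the bulk piece uses the column-divergence condition $\sum_i\pa_iF_{ij}^\pm=0$ recorded in (\ref{ass:boun}). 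This cancellation is the core of the argument and, as explained below, survives at every derivative order.

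For the $L^2$ estimate I invoke Reynolds transport: since $\Omega_f^\pm(t)$ moves with boundary velocity $\vu^\pm$ (from $\pa_tf=\underline{\vu}^\pm\cdot\vN_f$ on $\Gamma_f$ and $u_3^\pm=0$ on $\Gamma^\pm$) and $\div\vu^\pm=0$, one has $\tfrac12\tfrac{d}{dt}\|\phi\|_{L^2(\Omega_f^\pm)}^2=\int_{\Omega_f^\pm}\phi\,(\pa_t+\vu^\pm\cdot\nabla)\phi\,dx$ for any $\phi$. Pairing the first equation of (\ref{eq:vorticity-w-L}) with $\bar\vom^\pm$ and the second with $\bar\vG_j^\pm$ (summed over $j$), the cross terms drop out by the cancellation above, and what remains on the right-hand side is quadratic in $(\bar\vom^\pm,\bar\vG_j^\pm)$ multiplied by $\nabla\vu^\pm$ or $\nabla\vF^\pm$ (in $L^\infty$, since $s\ge 3$), plus the inhomogeneity $\nabla u_s^\pm\times\nabla F_{sj}^\pm$ which is bounded in $L^2$ by $C(L_1)$. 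This already gives $\tfrac{d}{dt}\bigl(\|\bar\vom^\pm\|_{L^2}^2+\sum_j\|\bar\vG_j^\pm\|_{L^2}^2\bigr)\le C(L_1)\bigl(1+\|\bar\vom^\pm\|_{L^2}^2+\sum_j\|\bar\vG_j^\pm\|_{L^2}^2\bigr)$.

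For the $H^{s-1}$ estimate I apply $\pa^\alpha$ for each multi-index $|\alpha|\le s-1$. The principal part is the same coupled transport system for $(\pa^\alpha\bar\vom^\pm,\pa^\alpha\bar\vG_j^\pm)$, while the lower-order corrections are the commutators $[\pa^\alpha,\vu^\pm\cdot\nabla]\bar\vom^\pm$ and $[\pa^\alpha,\vF_j^\pm\cdot\nabla]\bar\vG_j^\pm$ together with $\pa^\alpha$ acting on the inhomogeneity. Crucially, the highest-order cross-coupling $\vF_j^\pm\cdot\nabla\pa^\alpha\bar\vG_j^\pm$ still cancels against $\vF_j^\pm\cdot\nabla\pa^\alpha\bar\vom^\pm$ via the same integration-by-parts identity, since the divergence and boundary conditions on $\vF_j^\pm$ are unchanged. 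The commutator and source terms are then controlled in $L^2$ by $C(L_1)\bigl(1+\|\bar\vom^\pm\|_{H^{s-1}}+\sum_j\|\bar\vG_j^\pm\|_{H^{s-1}}\bigr)$ through standard Moser--Kato--Ponce product estimates, using that $H^{s-1}(\Omega_f^\pm)$ is an algebra in three dimensions for $s-1\ge 2$ and that $\vu^\pm,\vF^\pm\in H^s(\Omega_f^\pm)$ with norms bounded by $L_1$.

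Summing over $|\alpha|\le s-1$ and applying Gronwall's inequality produces the bound (\ref{eq:linearwg}). The main technical obstacle is ensuring that the top-order cancellation genuinely survives after differentiation and that the commutator estimates close at the borderline regularity $s=3$; this is handled by the divergence-structure argument above, or, equivalently and more cleanly, by pulling everything back to the fixed reference domain $\Omega_*^\pm$ via $\Phi_f^\pm$ and invoking Lemma \ref{lem:basic} to transfer Sobolev and product estimates between $\Omega_f^\pm$ and $\Omega_*^\pm$ at each step.
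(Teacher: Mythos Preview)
Your proposal is correct and follows essentially the same approach as the paper: Reynolds transport on the moving domain, the symmetric cancellation of the cross terms $\vF_j^\pm\cdot\nabla\bar\vG_j^\pm$ against $\vF_j^\pm\cdot\nabla\bar\vom^\pm$ via $\div\vF_j^\pm=0$ and $\vF_j^\pm\cdot\vN_f=0$, commutator control at order $s-1$, and Gronwall. Your write-up is in fact more explicit than the paper's about why the boundary and divergence conditions make the cancellation go through at every derivative level.
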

\begin{proof}Using $\pa_t f=u^\pm\cdot \vN_f$ and integrating by parts, we obtain
	\begin{align*}
		&\frac{1}{2}\frac{d}{dt}\int_{\Om^\pm_f}|\nabla^{s-1}\bar\vom^\pm(t,x)|^2+\sum^3_{j=1}|\nabla^{s-1}\bar \vG^\pm_j(t,x)|^2dx\\
		=&\int_{\Om^\pm_f}\nabla^{s-1}\bar\vom^\pm\cdot \nabla^{s-1}\pa_t\bar\vom^\pm +\sum^3_{j=1}\nabla^{s-1}\bar \vG_j^\pm\cdot \nabla^{s-1}\pa_t\bar G_j^\pm dx\\
		&\mp\frac{1}{2}\int_{\Gamma_f}(|\nabla^{s-1}\bar\vom^\pm|^2+\sum^3_{j=1}|\nabla^{s-1}\bar G^\pm_j|^2)(\vu^\pm\cdot\vn)d\sigma.
	\end{align*}
From (\ref{eq:vorticity-w-L}) and the fact that $\vF^\pm_j\cdot \vN_f=0$, we can derive
	\begin{align*}
		\frac{1}{2}\frac{d}{dt}&\int_{\Om^\pm_f}|\nabla^{s-1}\bar\vom^\pm(t,x)|^2+\sum^3_{j=1}|\nabla^{s-1}\bar \vG^\pm_j(t,x)|^2dx\\
		\le&\int_{\Om^\pm_f}\nabla^{s-1}\bar\vom^\pm\cdot\nabla^{s-1}[-\vu^\pm\cdot\nabla\bar\vom^\pm]+\sum^3_{j=1}\nabla^{s-1}\bar\vG_j^\pm\cdot\nabla^{s-1}[-\vu^\pm\cdot\nabla \bar\vG_j^\pm]dx\\
		&+\int_{\Om^\pm_f}\nabla^{s-1}\bar\vom^\pm\cdot\nabla^{s-1}[\sum^3_{j=1} \vF_j^\pm\cdot\nabla \bar\vG_j^\pm]+\sum^3_{j=1}\nabla^{s-1}\bar\vom^\pm\cdot\nabla^{s-1}[\vF_j^\pm\cdot\nabla \bar\vG_j^\pm]dx\\
		&\mp\frac{1}{2}\int_{\Gamma_f}(|\nabla^{s-1}\bar\vom^\pm|^2+\sum^3_{j=1}|\nabla^{s-1}\bar \vG^\pm_j|^2)(\vu^\pm\cdot\vn)d\sigma\\
		&+C(L_1)\Big(1+\|\bar\vom^\pm(t)\|^2_{H^{s-1}(\Omega^\pm_f)}+\sum^3_{j=1}\|\bar \vG_j^\pm(t)\|^2_{H^{s-1}(\Omega^\pm_f)}\Big)\\
		\le&\frac{1}{2}\int_{\Om^\pm_f}-\vu^\pm\cdot\nabla(|\nabla^{s-1}\bar\vom^\pm|^2+\sum^3_{j=1}|\nabla^{s-1}\bar \vG_j^\pm|^2)dx\\
		&\mp\frac{1}{2}\int_{\Gamma_f}(|\nabla^{s-1}\bar\vom^\pm|^2+\sum^3_{j=1}|\nabla^{s-1}\bar \vG^\pm_j|^2)(\vu^\pm\cdot\vn)d\sigma\\
		&+\sum^3_{j=1}\int_{\Om^\pm_f}\vF_j^\pm\cdot\nabla(\nabla^{s-1}\bar\vom^\pm\cdot\nabla^{s-1}\bar \vG_j^\pm)dx\\
		&+C(L_1)\Big(1+\|\bar\vom^\pm(t)\|^2_{H^{s-1}(\Omega^\pm_f)}+\sum^3_{j=1}\|\bar \vG_j^\pm(t)\|^2_{H^{s-1}(\Omega^\pm_f)}\Big)\\
		\le&C(L_1)\Big(1+\|\bar\vom^\pm(t)\|^2_{H^{s-1}(\Omega^\pm_f)}+\sum^3_{j=1}\|\bar \vG_j^\pm(t)\|^2_{H^{s-1}(\Omega^\pm_f)}\Big).
	\end{align*}
Then the proposition follows from Gronwall's inequality.
\end{proof}

Now we turn to the existence of solutions of the linearized system (\ref{eq:vorticity-w-L}).\smallskip

First of all, we consider the linear system
\begin{equation}\label{eq.linear}
  \left\{
  	\begin{array}{ll}
  		\pa_t\vom^\pm +\vu^\pm\cdot\nabla\vom^\pm -\sum\limits^3_{j=1}\vF^\pm_j\cdot\nabla \vG^\pm_j=\vg^\pm_0,&(t,x)\in Q_T^\pm,\\
  		\pa_t \vG^\pm_j+\vu^\pm\cdot\nabla \vG^\pm_j-\vF^\pm_j\cdot\nabla \vom^\pm=\vg^\pm_j,&(t,x)\in Q_T^\pm,\\
		\vom^\pm(0,x)=\vom_0^\pm,\qquad \vG^\pm_j(0,x)=\vG^\pm_{j,0}, &x\in\Om^\pm_{f_0}.
  	\end{array}
  \right.
\end{equation}
Here $ Q_T^\pm$ is defined by $f$ in the same way as before.

\begin{lemma}\label{lem:ex}
Assume that $f,\vu^\pm,\vF^\pm$ satisfy (\ref{ass:regularity})-(\ref{ass:stability}). Given the initial data $(\vom_0^\pm,
	\vG_{j,0}^\pm)=(0,0)$, $(\vg^\pm_0,\vg^\pm_j)\in L^1([0,T];H^{s-1}(\Om_f))$, there exists a unique solution $(\vom^\pm,
	\vG^\pm_j)\in C([0,T];H^{s-1}(\Omega^\pm_f)\times H^{s-1}(\Omega^\pm_f))$ to the system (\ref{eq.linear}) satisfying the following estimate
	\begin{align*}
		\sup_{t\in[0,T]}\Big(\|\vom^\pm(t)\|_{H^{s-1}(\Omega^\pm_f)}&+\sum^3_{j=1}\|\vG_j^\pm(t)\|_{H^{s-1}(\Omega^\pm_f)}\Big)\le C(L_1,T)\|(\vg^\pm_0,\vg^\pm_j)\|_{L^1([0,T];H^{s-1}(\Omega^\pm_f))}.
\end{align*}	
\end{lemma}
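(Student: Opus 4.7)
The plan is to view (\ref{eq.linear}) as a linear Friedrichs symmetric hyperbolic system for the twelve components $(\vom^\pm,\vG_j^\pm)_{j=1,2,3}$, which decouples between the two sides. Working on $\Om_f^+$ and suppressing the $+$ superscript, I first derive the energy estimate directly from the equation and then construct solutions by pulling back to the fixed reference domain and using a parabolic regularization.

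For the a priori estimate, I would apply $\nabla^{s-1}$ to each equation, pair with $\nabla^{s-1}\vom$ and $\nabla^{s-1}\vG_j$ respectively in $L^2(\Om_f)$, and sum. The cross terms $-\vF_j\cdot\nabla\vG_j$ and $-\vF_j\cdot\nabla\vom$ combine via $\div\vF_j=0$ into a boundary integral on $\Gamma_f$ proportional to $\vF_j\cdot\vN_f$, which vanishes by (\ref{ass:boun}). The transport integrals produce a boundary contribution proportional to $\vu\cdot\vn$ that cancels the moving-domain term $\mp\frac12\int_{\Gamma_f}(|\nabla^{s-1}\vom|^2+\sum_j|\nabla^{s-1}\vG_j|^2)(\vu\cdot\vn)\,d\sigma$ thanks to $\pa_tf=\vu\cdot\vN_f$, exactly as in the proof of Proposition \ref{prop:vorticity}. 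The commutators $[\nabla^{s-1},\vu\cdot\nabla]$ and $[\nabla^{s-1},\vF_j\cdot\nabla]$ are controlled in $L^2(\Om_f)$ by $C(L_1)\bigl(\|\vom\|_{H^{s-1}(\Om_f)}+\sum_j\|\vG_j\|_{H^{s-1}(\Om_f)}\bigr)$ using the product estimates of Lemma \ref{lem:basic} together with (\ref{ass:regularity}). Setting $\cF(t)\eqdefa\|\vom(t)\|^2_{H^{s-1}(\Om_f)}+\sum_{j=1}^3\|\vG_j(t)\|^2_{H^{s-1}(\Om_f)}$, this yields
\begin{equation*}
\frac{d}{dt}\cF(t)\le C(L_1)\cF(t)+2\|(\vg_0,\vg_j)(t)\|_{H^{s-1}(\Om_f)}\cF(t)^{1/2},
\end{equation*}
so with $\cF(0)=0$, Gronwall's inequality delivers the asserted bound. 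Uniqueness is then immediate from the same inequality applied to the difference of two solutions (with zero source).

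For existence, I would pull the system back to the reference domain $\Om_*$ via the harmonic diffeomorphism $\Phi_f$ of Section 2. The resulting system on the fixed domain remains Friedrichs-symmetric, with coefficients in $L^\infty([0,T];H^s(\Om_*))$ by Lemma \ref{lem:basic} and (\ref{ass:regularity}), and the pulled-back boundary $\Gamma_*$ is characteristic for the principal part by virtue of $\vF_j\cdot\vN_f=0$ and $\pa_tf=\vu\cdot\vN_f$. I would then approximate by adding a viscous regularization $-\e(\Delta\tw^\e,\Delta\tg_j^\e)$ with homogeneous Neumann conditions on $\pa\Om_*$, solve the resulting linear parabolic system for each $\e>0$ by standard Galerkin or semigroup theory, and derive $\e$-uniform bounds by repeating the energy calculation above (the viscous term contributes the nonpositive $-\e\|\nabla\cdot\|^2_{L^2}$, which is discarded, while the Neumann condition kills the extra boundary term). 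Weak-$*$ compactness in $L^\infty([0,T];H^{s-1})$ extracts a limit solving the pulled-back system, strong continuity in time is recovered from the equation itself, and pushing forward with $\Phi_f^{-1}$ yields a solution in $C([0,T];H^{s-1}(\Om_f^\pm))$. The main obstacle is verifying that the characteristic nature of $\Gamma_*$ is preserved under the Neumann regularization so that no uncontrolled boundary contribution survives in the $\e$-uniform estimate; this is precisely what (\ref{ass:boun}) guarantees, and is why the scheme closes.
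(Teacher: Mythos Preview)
Your a priori estimate is correct and essentially reproduces the argument of Proposition~\ref{prop:vorticity}. The existence construction, however, differs from the paper's. Instead of the harmonic coordinate $\Phi_f$, the paper straightens the domain with the \emph{Lagrangian} flow $X^\pm$ of $\vu^\pm$, so that $\partial_t+\vu^\pm\cdot\nabla$ becomes simply $\partial_{\tilde t}$ on the fixed cylinder $[0,T]\times\Om^\pm_{f_0}$; it then obtains a weak $L^\infty_tL^2_x$ solution by a duality argument. Namely, the adjoint $L^*$ satisfies an $L^2$ energy bound on smooth test functions vanishing at $t=T$ (here $\div\vF_j^\pm=0$ and $\vF_j^\pm\cdot\vN_f=0$ are used to integrate by parts without boundary contribution), so $L^*$ is injective on that class, and a Hahn--Banach extension of its inverse yields $\widetilde{\vW}^\pm=N^*\tilde\vg^\pm$. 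Higher regularity is then recovered by difference quotients. This route never imposes any auxiliary boundary condition on the unknown.

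Your vanishing-viscosity scheme is a legitimate alternative in spirit, but the step ``derive $\e$-uniform bounds by repeating the energy calculation above'' hides a real difficulty that your final paragraph misidentifies. The Neumann condition kills the viscous boundary term only at the $L^2$ level: once you apply $\nabla^{s-1}$ with any normal component, $\partial_n\nabla^{s-1}\widetilde{\vW}^\e$ no longer vanishes on $\partial\Om_*$, and the leftover $\e\int_{\partial\Om_*}\partial_n\nabla^{s-1}\widetilde{\vW}^\e\cdot\nabla^{s-1}\widetilde{\vW}^\e$ is not absorbed by the favourable interior term $\e\|\nabla^s\widetilde{\vW}^\e\|_{L^2}^2$ (the trace inequality costs half a derivative too many). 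Because $\Gamma_*$ is characteristic for the first-order part, you also cannot use the equation to trade normal for tangential derivatives. The scheme can be repaired---regularize only in tangential directions, or work in conormal Sobolev spaces as is standard for characteristic symmetric hyperbolic IBVPs---but it is not the one-line remark your sketch makes it. The paper's duality argument is chosen precisely to sidestep this boundary issue, at the cost of a less constructive proof.
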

\begin{proof}
Let $\vW^\pm=(\vom^\pm,\vG^\pm)$. We rewrite the system as
	\begin{align*}
		L(\vW^\pm)=\vg^\pm.
	\end{align*}
We define the flow map $X^\pm(t,\cdot)$ as
	\begin{align*}
		\frac{dX^\pm(\tilde t,\tilde x)}{d\tilde t}=\vu^\pm(\tilde t,X^\pm(\tilde t,\tilde x)),\qquad (\tilde t,\tilde x)\in [0,T]\times\Om^\pm_{f_0} ,
	\end{align*}
	with $t=\tilde t$. Now we write $(t,x)\in Q_T^\pm$ and $(\tilde t,\tilde x)\in [0,T]\times\Om^\pm_{f_0}$.
Then we rewrite $L$ in the new coordinate as
	\begin{align*}
		\widetilde L(\widetilde\vW^\pm)=\pa_{\tilde t}\widetilde \vW^\pm+\widetilde M(\widetilde \vW^\pm)=\widetilde{\vg}^\pm,
	\end{align*}
	where  $\widetilde\vW^\pm(\tilde t, \tilde x)=\vW^\pm\big(\tilde t, X^\pm(\tilde t,\tilde x)\big), \widetilde\vg^\pm(\tilde t, \tilde x)=\vg^{\pm}\big(\tilde t, X^\pm(\tilde t,\tilde x)\big)$, and $\widetilde M$ is given by
	\begin{equation}
		\widetilde M(\widetilde\vW^\pm)=
		\left(
		  	\begin{array}{l}
		  	-\sum\limits^3_{j=1}\big(\widetilde\vF^\pm_j(\tilde t,\tilde x)\cdot \frac{\pa X^{\pm-1}(\tilde t,\cdot)}{\pa\tilde x}\cdot\nabla_{\tilde x}\big)\widetilde\vG^\pm_j(\tilde t,\tilde x)\\
		  	-\big(\widetilde\vF^\pm_1(\tilde t,\tilde x)\cdot \frac{\pa X^{\pm-1}(\tilde t,\cdot)}{\pa\tilde x}\cdot\nabla_{\tilde x}\big)\tilde\vom^\pm(\tilde t,\tilde x)\\
		  	-\big(\widetilde\vF^\pm_2(\tilde t,\tilde x)\cdot \frac{\pa X^{\pm-1}(\tilde t,\cdot)}{\pa\tilde x}\cdot\nabla_{\tilde x}\big)\tilde\vom^\pm(\tilde t,\tilde x)\\
		  	-\big(\widetilde\vF^\pm_3(\tilde t,\tilde x)\cdot \frac{\pa X^{\pm-1}(\tilde t,\cdot)}{\pa\tilde x}\cdot\nabla_{\tilde x}\big)\tilde\vom^\pm(\tilde t,\tilde x)\\
		  	\end{array}
		\right).\nonumber
	\end{equation}

We define
	\begin{align*}
		D\eqdefa\big\{\tilde \vv^\pm=(\tilde\vv^\pm_0,\tilde\vv^\pm_1,\tilde\vv^\pm_2,\tilde\vv^\pm_3)\in C^\infty([0,T]\times\Om^\pm_{f_0})|\tilde\vv^\pm(T,\tilde x)=0\big\}.
	\end{align*}
Then $\vW^\pm$ solves (\ref{eq.linear}) if and only if for every $\tilde\vv^\pm\in D$,
	\begin{align*}
		\int^T_0\int_{\Om_{f_0}^\pm}\widetilde L(\widetilde \vW^\pm)\cdot\tilde\vv^\pm d\tilde xd\tilde t=\int^T_0\int_{\Om_{f_0}^\pm}\tilde \vg^\pm\cdot\tilde \vv^\pm d\tilde xd\tilde t.
	\end{align*}
	Thanks to $\div \vF^\pm_j=0$, $\vF^\pm_j\cdot \vN_f=0$ on $\Gamma_f$, using the flow map $X^\pm(t,\cdot)$, it is easy to show that
$\vW^\pm$ solves (\ref{eq.linear}) if and only if for every $\tilde \vv^\pm\in D$,
	\begin{align}\label{eq:linear-integral}
		\int^T_0\int_{\Om^\pm_{f_0}}\widetilde\vW^\pm\cdot L^*(\tilde \vv^\pm)d\tilde xd\tilde t=\int^T_0\int_{\Om^\pm_{f_0}}\vg^\pm\cdot\tilde\vv^\pm d\tilde xd\tilde t,
	\end{align}
where $L^*$ denotes the dual of $L$, i.e.,
	\begin{equation}
	 	L^*(\tilde \vv)={-}\left(
	  		\begin{array}{l}
		  	\pa_{\tilde t}\tilde\vv^\pm_0-\sum\limits^3_{j=1}\widetilde\vF^\pm_j\cdot \frac{\pa X^{\pm-1}}{\pa\tilde x}\cdot\nabla_{\tilde x}\tilde\vv^\pm_j\\
		  	\pa_{\tilde t}\tilde\vv^\pm_1-\widetilde\vF^\pm_1\cdot \frac{\pa X^{\pm-1}}{\pa\tilde x}\cdot\nabla_{\tilde x}\tilde\vv^\pm_0\\
		  	\pa_{\tilde t}\tilde\vv^\pm_2-\widetilde\vF^\pm_2\cdot \frac{\pa X^{\pm-1}}{\pa\tilde x}\cdot\nabla_{\tilde x}\tilde\vv^\pm_0\\
		  	\pa_{\tilde t}\tilde\vv^\pm_3-\widetilde\vF^\pm_3\cdot \frac{\pa X^{\pm-1}}{\pa\tilde x}\cdot\nabla_{\tilde x}\tilde\vv^\pm_0\\
	  		\end{array}
	 	\right).	\nonumber	
	\end{equation}
	We denote
	\begin{align*}
		L^*(\tilde\vv^\pm)=\widetilde \vV^\pm.
	\end{align*}
It is easy to show that
	\begin{align*}
		\sup_{t\in[0,T]} \|\tilde\vv^\pm\|_{L^2(\Om^\pm_{f_0})}(t) \le C(L_1)\|\widetilde \vV^\pm\|_{L^1([0,T];L^2(\Om^\pm_{f_0}))}.
	\end{align*}	
Hence, the operator $L^*$ is a bijection from $D$ to $L^*(D)$. Let $N_0$ be its inverse.
By Hahn-Banach theorem, we can extend $N_0$(denoted by $N$ its extension) to the space $L^1([0,T];L^2(\Om^\pm_{f_0}))$:	
\begin{align*}
		N:L^1([0,T];L^2(\Om^\pm_{f_0}))\to C([0,T];L^2(\Om^\pm_{f_0})),\qquad \widetilde\vV^\pm\to\tilde \vv^\pm.
	\end{align*}
We denote by $N^*$ the dual of $N$:
	\begin{align*}
		N^*:\mathcal M([0,T];L^2(\Om^\pm_{f_0}))\to L^\infty([0,T];L^2(\Om^\pm_{f_0})),\quad \tilde\vg^\pm\to\widetilde\vW^\pm.
	\end{align*}
Then for $\tilde\vg^\pm\in L^1([0,T];L^2(\Om^\pm_{f_0}))$,  $		\widetilde\vW^\pm=N^*(\tilde\vg^\pm)$ satisfies \eqref{eq:linear-integral} and
	\begin{align*}
		\|\widetilde\vW^\pm\|_{L^\infty([0,T];L^2(\Om^\pm_{f_0}))}\le C(L_1)\|\tilde\vg^\pm\|_{L^1([0,T];L^2(\Om^\pm_{f_0}))}.
	\end{align*}
This proves the existence of the solution.

The regularity of the solution could be proved by using  standard difference quotient method. The uniqueness is obvious.
\end{proof}

Now we consider the system (\ref{eq.linear}) with nonzero initial data
\begin{align*}
	\vom^\pm(0,x)=\vom^\pm_0,\quad \vG^\pm_j(0,x)=\vG^\pm_{j0},
\end{align*}
where $(\vom^\pm_0,\vG^\pm_{j0})\in H^{s-1}(\Omega^\pm_{f_0})\times H^{s-1}(\Omega^\pm_{f_0})$. Let $\widehat\vW^\pm=\vW^\pm-(\vom^\pm_0,\vG^\pm_{j0})$. Then the problem is reduced to the case of zero initial data with $\vg^\pm$ replace by $\vg^\pm-M(\vom^\pm_0,\vG^\pm_{j0})$. From Lemma \ref{lem:ex}, we know that the solution $\widehat\vW^\pm$ exists but with the loss of regularity. To recover the desired regularity, we may first mollify the initial data, and then use the following uniform estimate for smooth solutions:
\begin{align*}
	\sup_{t\in[0,T]}\Big(&||\vom^\pm(t)||^2_{H^{s-1}(\Omega^\pm_f)}+\sum^3_{j=1}||\vG_j^\pm(t)||^2_{H^{s-1}(\Omega^\pm_f)}\Big)\\
	&\le C(L_1,T)\Big(||\vg^\pm||_{L^2([0,T];H^{s-1}(\Om^\pm_f))}+||\vom_0^\pm||^2_{H^{s-1}(\Omega^\pm_f)}+\sum^3_{j=1}||\vG_{j,0}^\pm||^2_{H^{s-1}(\Omega^\pm_f)}\Big).
\end{align*}

Thus, we can conclude the following proposition.

\begin{proposition}\label{prop:ex}
	Assume that $f,\vu^\pm,\vF^\pm$ satisfy (\ref{ass:regularity})-(\ref{ass:stability}). Given the initial data $(\bar\vom_0^\pm,\bar
\vG_{j,0}^\pm)\in H^{s-1}(\Omega^\pm_{f_0})\times H^{s-1}(\Omega^\pm_{f_0})$, there exists a unique solution $(\bar \vom^\pm,
\bar \vG^\pm_j)\in C([0,T];H^{s-1}(\Omega^\pm_f)\times H^{s-1}(\Omega^\pm_f))$ to the system (\ref{eq:vorticity-w-L}) satisfying the estimate (\ref{eq:linearwg}).
\end{proposition}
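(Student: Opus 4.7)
My plan is to combine the a priori estimate already established in Proposition~\ref{prop:vorticity} with the linear existence theory of Lemma~\ref{lem:ex}, following the recipe sketched at the end of the section. The system (\ref{eq:vorticity-w-L}) differs from the constant-coefficient-in-the-transport-part system (\ref{eq.linear}) only by zeroth-order terms of the form $\bar{\vom}^\pm\cdot\nabla\vu^\pm$, $\bar{\vG}_j^\pm\cdot\nabla\vu^\pm$, etc., together with the inhomogeneous source $-2\sum_{s=1}^3\nabla u_s^\pm\times\nabla F_{sj}^\pm$. Since $\vu^\pm,\vF^\pm\in H^s$ and $s\ge 3$, these zeroth-order coefficients lie in $L^\infty\cap H^{s-1}$, so they can be treated as a bounded linear perturbation plus a given $H^{s-1}$ source.

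First I would reduce to zero initial data: let $\bar{\vW}^\pm=(\bar{\vom}^\pm,\bar{\vG}_j^\pm)$, extend $(\bar{\vom}_0^\pm,\bar{\vG}_{j,0}^\pm)$ trivially in $t$ to a function on $Q_T^\pm$ (possibly after transporting it by the flow $X^\pm$ so that the divergence-free constraints on $\vF^\pm_j$ and its tangential condition on $\Gamma_f$ are preserved in the same way as in Lemma~\ref{lem:ex}), and set $\widehat{\vW}^\pm=\bar{\vW}^\pm-(\bar{\vom}_0^\pm,\bar{\vG}_{j,0}^\pm)$. Then $\widehat{\vW}^\pm$ satisfies a system of the same shape as (\ref{eq.linear}) but with a new $H^{s-1}$-in-space, $L^1$-in-time source incorporating the zeroth-order perturbation terms and the transport derivatives of the lifted data. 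Lemma~\ref{lem:ex} (applied iteratively via a Banach contraction argument in $C([0,T];L^2)$, using smallness of $T$ to absorb the zeroth-order perturbation, followed by rescaling to arbitrary $T$ by concatenation) produces a unique weak solution $\widehat{\vW}^\pm\in C([0,T];L^2(\Omega_f^\pm))$; equivalently, a weak solution $\bar{\vW}^\pm$ to (\ref{eq:vorticity-w-L}).

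To upgrade to $H^{s-1}$ regularity, I would mollify the initial data $(\bar{\vom}_0^\pm,\bar{\vG}_{j,0}^\pm)$ to a smooth family $(\bar{\vom}_{0,\e}^\pm,\bar{\vG}_{j,0,\e}^\pm)$, and similarly mollify the coefficients $(\vu^\pm,\vF^\pm)$ and the source term if needed. The corresponding smooth problem has a classical $C^\infty$ solution $\bar{\vW}_\e^\pm$ (via the standard difference-quotient and ODE-flow argument along $X^\pm$; one exploits $\div\vF_j^\pm=0$ and $\vF_j^\pm\cdot\vN_f=0$ to carry out integrations by parts just as in Proposition~\ref{prop:vorticity} at the discrete level). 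Now Proposition~\ref{prop:vorticity} applies to $\bar{\vW}_\e^\pm$ and yields an $\e$-uniform bound
\begin{equation*}
\sup_{t\in[0,T]}\Big(\|\bar{\vom}_\e^\pm(t)\|_{H^{s-1}(\Omega_f^\pm)}^2+\sum_{j=1}^3\|\bar{\vG}_{j,\e}^\pm(t)\|_{H^{s-1}(\Omega_f^\pm)}^2\Big)\le\Big(1+\|\bar{\vom}_{0,\e}^\pm\|_{H^{s-1}}^2+\sum_{j=1}^3\|\bar{\vG}_{j,0,\e}^\pm\|_{H^{s-1}}^2\Big)e^{C(L_1)T}.
\end{equation*}
Weak-$*$ compactness then produces a limit $\bar{\vW}^\pm\in L^\infty(0,T;H^{s-1})$; by the uniqueness of weak solutions established above, this limit coincides with the $L^2$-solution already constructed, upgrading its regularity. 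Continuity in time follows from the equation, the $H^{s-1}$ bound, and the standard Aubin--Lions / Bona--Smith argument (approximate, use uniform estimates and strong convergence of data to deduce $C([0,T];H^{s-1})$).

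Finally, uniqueness is already built in: the difference of two $H^{s-1}$ solutions solves the same system with vanishing initial data and vanishing source, and the $L^2$ energy identity used in Proposition~\ref{prop:vorticity} applied at the $L^2$ level gives a Gronwall bound forcing it to be zero. The main obstacle I anticipate is the bookkeeping in the regularization step, in particular ensuring that the mollified coefficients $(\vu_\e^\pm,\vF_{j,\e}^\pm)$ can be chosen to still satisfy (or at least satisfy up to $O(\e)$ in the relevant norms) the structural constraints $\div\vF_{j,\e}^\pm=0$ and $\vF_{j,\e}^\pm\cdot\vN_f=0$ that were used to cancel the boundary term in the energy identity of Proposition~\ref{prop:vorticity}; this is handled by mollifying in the harmonic coordinates of Section~2 and, if needed, introducing $O(\e)$ corrections that vanish in the limit.
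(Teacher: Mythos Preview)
Your proposal is correct and follows essentially the same route as the paper's (very terse) argument: subtract the initial data to reduce to the zero-data case of Lemma~\ref{lem:ex} (incurring a one-derivative loss), then mollify the initial data and invoke the uniform $H^{s-1}$ a priori estimate of Proposition~\ref{prop:vorticity} to recover full regularity in the limit. Note that the paper mollifies only the initial data, not the coefficients $(\vu^\pm,\vF^\pm)$---since these are already in $H^s$ with the constraints~(\ref{ass:boun}) satisfied exactly, Lemma~\ref{lem:ex} and the energy identity of Proposition~\ref{prop:vorticity} apply directly---so your anticipated obstacle concerning preservation of the structural constraints under coefficient mollification does not in fact arise.
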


For the solutions to (\ref{eq:vorticity-w-L}), we also have
\begin{lemma}
	It holds that
	\begin{align*}
		\frac{d}{dt}\int_{\Gamma^\pm}\bar\om^\pm_3dx'=0,\qquad\frac{d}{dt}\int_{\Gamma^\pm}\bar G_{3j}^\pm dx'=0.
	\end{align*}
\end{lemma}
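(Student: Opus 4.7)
\medskip

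My plan is to compute $\frac{d}{dt}\int_{\Gamma^\pm}\bar\om^\pm_3\,dx'$ and $\frac{d}{dt}\int_{\Gamma^\pm}\bar G^\pm_{3j}\,dx'$ directly from the linearized system (\ref{eq:vorticity-w-L}) by taking the third component, restricting to $\Gamma^\pm=\bbT^2\times\{\pm1\}$, and integrating in $x'$. The key point is that the background $(\vu^\pm,\vF^\pm)$ satisfies $u_3^\pm=F_{3j}^\pm=0$ on $\Gamma^\pm$ by assumption (\ref{ass:boun}), which together with the divergence-free constraints and the flatness of $\Gamma^\pm$ will force all the non-trivial contributions to cancel pairwise.

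For the first identity, on $\Gamma^\pm$ the tangential restriction of $u_3^\pm=0$ gives $\pa_1 u_3^\pm=\pa_2 u_3^\pm=0$ and similarly for $F_{3j}^\pm$. Hence the third component of the transport term reduces to
\[
(\vu^\pm\cdot\nabla\bar\vom^\pm)_3\big|_{\Gamma^\pm}=u_1^\pm\pa_1\bar\om_3^\pm+u_2^\pm\pa_2\bar\om_3^\pm,
\]
and integrating by parts in $x'\in\bbT^2$ together with $\div\vu^\pm=0$ gives $\int_{\Gamma^\pm}(\vu^\pm\cdot\nabla\bar\vom^\pm)_3\,dx'=\int_{\Gamma^\pm}\pa_3u_3^\pm\,\bar\om_3^\pm\,dx'$. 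The same manipulation applied to $(\vF_j^\pm\cdot\nabla\bar\vG_j^\pm)_3$ using $\sum_i\pa_iF^\pm_{ij}=0$ produces $\int_{\Gamma^\pm}\pa_3F_{3j}^\pm\,\bar G_{3j}^\pm\,dx'$. On the right-hand side, the stretching terms $(\bar\vom^\pm\cdot\nabla\vu^\pm)_3$ and $(\bar\vG^\pm_j\cdot\nabla\vF^\pm_j)_3$ reduce on $\Gamma^\pm$ to $\bar\om_3^\pm\pa_3u_3^\pm$ and $\bar G_{3j}^\pm\pa_3F_{3j}^\pm$ respectively, again because tangential derivatives of $u_3^\pm$ and $F_{3j}^\pm$ vanish there. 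Matching the two sides yields $\int_{\Gamma^\pm}\pa_t\bar\om_3^\pm\,dx'=0$, which is the first claim.

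For the second identity, the same cancellation between the transport and stretching terms goes through verbatim; the only new ingredient is the last term $-2\sum_{s=1}^3\nabla u_s^\pm\times\nabla F_{sj}^\pm$. Its third component is $\sum_s(\pa_1 u_s^\pm\pa_2 F_{sj}^\pm-\pa_2 u_s^\pm\pa_1 F_{sj}^\pm)$, which is a sum of Jacobian-type terms in the tangential variables. Since $\Gamma^\pm$ is a flat torus with no boundary, integration by parts (or directly recognizing each summand as a tangential divergence) shows that $\int_{\Gamma^\pm}\sum_s(\pa_1u_s^\pm\pa_2F^\pm_{sj}-\pa_2u_s^\pm\pa_1F^\pm_{sj})\,dx'=0$.

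The calculation is largely routine once the boundary relations are used correctly; the only delicate point is making sure that on $\Gamma^\pm$ one distinguishes between tangential derivatives (which annihilate $u_3^\pm$ and $F_{3j}^\pm$) and the normal derivative $\pa_3$ (which does not), so that the identities $\div\vu^\pm=0$ and $\div\vF_j^{\pm\top}=0$ can be used to convert $\pa_1u_1^\pm+\pa_2u_2^\pm=-\pa_3u_3^\pm$ and similarly for $\vF^\pm_j$. Once this is organized correctly, no Gronwall argument or further estimate is required and the lemma follows.
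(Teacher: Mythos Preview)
Your proposal is correct and follows essentially the same approach as the paper: restrict the third component of (\ref{eq:vorticity-w-L}) to $\Gamma^\pm$, use $u_3^\pm=F_{3j}^\pm=0$ there to kill the normal transport contribution and the tangential stretching contributions, integrate by parts in $x'$, and close using $\div\vu^\pm=\div\vF_j^\pm=0$; the extra cross term in the $\bar G_{3j}^\pm$ equation is handled exactly as you say, by recognizing its third component as a tangential Jacobian with zero mean on $\bbT^2$.
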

\begin{proof}These are direct consequences of (\ref{eq:vorticity-w-L}) and (\ref{ass:boun}).
From the fact that $\pa_i u^\pm_3=\pa_i F^\pm_{3j}=0$ ($i=1,2$) on $\Gamma^\pm$, we have
	\begin{align*}
		\frac{d}{dt}\int_{\Gamma^+}\bar\om^+_3dx'=&\int_{\Gamma^+}(-u^+_1\pa_1\bar\om^+_3-u^+_2\pa_2\bar\om^+_3+\pa_3\bar\om^+_3)dx'\\
		&+\int_{\Gamma^+}\sum^3_{j=1}(F^+_{1j}\pa_1\bar G^+_{3j}+F^+_{2j}\pa_1\bar G^+_{3j}-\bar G^+_{3j}\pa_3F^+_{3j})dx'\\
		=&\int_{\Gamma^+}(\pa_1 u^+_1+\pa_2 u^+_2+\pa_3 u^+_3)\bar\om^+_3dx'\\
		&-\int_{\Gamma^+}\sum^3_{j=1}(\pa_1 F^+_{1j}+\pa_2 F^+_{2j}+\pa_3 F^+_{3j})\bar G^+_{3j}dx'\\
		=&0.
	\end{align*}
	Similarly, it holds that
	\begin{align*}
		\frac{d}{dt}\int_{\Gamma^+}\bar G^+_{3j}dx'=&-2\int_{\Gamma^+}\sum_i(\pa_1u_i^+\pa_2F^+_{ij}-\pa_2u^+_i\pa_1F^+_{ij})dx'\\
		=&2\int_{\Gamma^+}\sum_i(u_i^+\pa_1\pa_2F^+_{ij}-u^+_i\pa_2\pa_1F^+_{ij})dx'\\
		=&0.
	\end{align*}
The proof for $\bar\om^-_3$, $\bar G^-_{3j}$ is similar.
\end{proof}

\section{Construction and contraction of the iteration map}
We assume that
\begin{align*}
	f_0\in H^{s+\f12}(\bbT^2),\quad  \vu_0^\pm,\,\vF_0^\pm\in H^{s}(\Omega_{f_0}^\pm).
\end{align*}
In addition, we assume that there exists $c_0>0$ such that
\begin{itemize}
	\item[1.] $-(1-2c_0)\le f_0(x')\le (1-2c_0)$;

	\item[2.] $\Lambda(\vF_0^\pm,\vv)\ge 2c_0$.
\end{itemize}

Let $f_*=f_0$, and $\Omega_*^\pm=\Omega_{f_0}^\pm$ be the reference region.
We take the initial data $(f_I,(\partial_tf)_I,\vom_{*I}^\pm,$ $\vG_{*I}^\pm,\beta_{Ii}^\pm,\gamma_{Ii}^\pm)$ for the equivalent system as follows
\begin{align*}
	&f_I=f_0,\quad (\partial_tf)_I=\vu_0^\pm(x',f_0(x'))\cdot(-\partial_1f_0,-\partial_2f_0,1),\\
	&\vom_{*I}^\pm=\curl\vu_0^\pm,\quad \vG_{*I}^\pm=\curl\vF_0^\pm,\\
	&\beta_{Ii}^\pm=\int_{\BT}u_{0i}^\pm(x',\pm1)dx',\quad \gamma_{Iij}^\pm=\int_{\BT}F_{0ij}^\pm(x',\pm1)dx',
\end{align*}
which satisfy
\begin{align}
	\|f_I\|_{H^{s+\f12}}+\|(\vom_{I*}^\pm, \vG_{I*}^\pm)\|_{H^{s-1}(\Omega_*^\pm)}
	+\|(\partial_tf)_I\|_{H^{s-\f12}}+|\beta^\pm_{Ii}|+|\gamma^\pm_{Iij}| \le M_0
\end{align}
for some $M_0>0$.  Then we define the following functional space.
\begin{definition}\label{def:X}
	Given two positive constants $M_1, M_2>0$ with $M_1>2M_0$, we define
	the space $\mathcal{X}=\mathcal{X}(T, M_1, M_2)$ be the collection of
	$(f, \vom_*^\pm, \vG_*^\pm, \beta^\pm_{i},\gamma^\pm_{ij})$, which satisfies
	\begin{align*}
		&\left(f(0),\partial_tf(0), \vom_*^\pm(0), \vG_*^\pm(0),\beta^\pm_{i}(0),\gamma^\pm_{ij}(0)\right)=\big(f_I, (\partial_t f)_I, \vom_{*I}^\pm, \vG_{*I}^\pm, \beta^\pm_{Ii},\gamma^\pm_{Iij}\big),\\
		& \sup_{t\in[0,T]}
		\|f(t,\cdot)-f_*\|_{H^{s-\f12}} \le \delta_0,\\
		&\sup_{t\in[0,T]}\Big(\|f(t)\|_{H^{s+\f12}}+\|\partial_tf(t)\|_{H^{s-\f12}}
		+\|(\vom_*^\pm, \vG_*^\pm)(t)\|_{H^{s-1}(\Omega_*^\pm)}+|\beta^\pm_{i}(t)|+|\gamma^\pm_{ij}(t)|\Big)\le M_1,\\
		&\sup_{t\in[0,T]}\Big(\|\partial_{t}^2f\|_{H^{s-\f32}}
		+\|(\partial_t\vom_*, \partial_t\vG_*)\|_{H^{s-2}(\Omega_*^\pm)}+|\partial_t\beta^\pm_{i}|+|\partial_t\gamma^\pm_{ij}|\Big)\le M_2,
	\end{align*}
	together with the condition $\int_{\bbT^2}\pa_tf(t,x')dx'=0$.
\end{definition}

The main goal of this section is to construct an iteration map $(\bar{f},\bar{\vom}_*, \bar{\vG}_*,\bar{\beta}^\pm_{i},$
$\bar{\gamma}^\pm_{ij})=\mathcal{F}\big(f,\vom_*^\pm, \vG_*^\pm, \beta^\pm_{i},\gamma^\pm_{ij}\big)
\in\mathcal{X}(T, M_1, M_2)$ for given $(f,\vom_*^\pm, \vG_*^\pm,\beta^\pm_{i},\gamma^\pm_{ij})\in \mathcal{X}
(T, M_1, M_2)$  with suitably chosen constants $M_1, M_2$ and $T$. In addition, we will show that the map $\mathcal{F}$
 is contract in $\mathcal{X}(T, M_1, M_2)$ for some suitably chosen $T, M_1, M_2$.

\subsection{Recover the bulk region, velocity and deformation tensor field}
Recall
\begin{align*}
	\Om_f^{+}=\big\{ x \in \Om| x_3 > f (t,x')\big\}, \quad\Om_f^{-}=\big\{ x \in \Om| x_3 < f (t,x')\big\},
\end{align*}
and the harmonic coordinate map $\Phi_f^\pm:\Omega_*^\pm\to\Omega^\pm_f$. Define
\begin{align*}
	\tilde{\vom}^\pm\triangleq P_{f}^{\div} (\vom _*^\pm\circ\Phi_{f }^{-1}),\quad
\tilde{\vG}^\pm\triangleq P_{f}^{\div} (\vG _*^\pm\circ\Phi_{f }^{-1}),
\end{align*}
where $P_{f }^{\div}$ is an project operator which maps a vector field
$\Omega_{f}^\pm$ to its divergence-free part. More precisely,
$P_{f }^{\div}\vom^\pm=\vom^\pm-\nabla\phi^\pm$ with
\begin{equation}\nonumber
  \left\{
  	\begin{array}{ll}
  		\Delta\phi^\pm=\div\vom^\pm\quad&\text{in}\quad \Omega_f^\pm,\\
		\partial_3\phi^\pm=0\quad&\text{on}\quad \Gamma^\pm,\\
		\phi^\pm=0\quad&\text{on}\quad \Gamma_f.
  	\end{array}
  \right.
\end{equation}
Obviously, we have $\div P_{f }^{\div}\vom^\pm=0$ in $\Omega_f^\pm$, and $\ve_3\cdot P_{f}^{\div}\vom^\pm=\om_3^\pm$ on $\Gamma^\pm$. Thus,
$P_{f }^{\div}\vom^\pm$ satisfies conditions (C1) and (C2) on $\Omega_f^\pm$. Following the same arguments, so does $P_{f }^{\div}\vG^\pm$.
Moreover, we have
	\begin{align}
		&\|(\tilde{\vom}^\pm, \tilde{\vG}^\pm)\|_{H^{s-1}(\Omega_f^\pm)}\le C(M_1),\label{eq:wh-est1}\\
		&\|(\pa_t\tilde{\vom}^\pm, \pa_t\tilde{\vG}^\pm)\|_{H^{s-2}(\Omega_f^\pm)}\le C\big(M_1, M_2\big).\label{eq:wh-est2}
	\end{align}
Then we define $\vu^\pm$ and $\vF^\pm$ as the solution of the following system
\begin{equation}
  \left\{
  	\begin{array}{ll}
  		\curl\vu^\pm=\tilde{\vom}^\pm,\quad\div\vu^\pm =0\quad  &\text{in}\quad\Om_f^{\pm},\\
		\vu^\pm\cdot\vN_f=\partial_tf\quad&\text{on}\quad\Gamma_{f}, \\
		 \vu^\pm\cdot\ve_3 = 0,\quad\int_{\Gamma^\pm}u_i dx'=\beta^\pm_i (i=1,2)\quad&\text{on}\quad\Gamma^{\pm},
  	\end{array}
  \right.
\end{equation}
and
\begin{equation}
  \left\{
  	\begin{array}{ll}
  		\curl \vF_j^\pm=\tilde{\vG}_j^\pm,\quad\div\vF_j^\pm=0\quad&\text{in}\quad\Om_f^{\pm},\\
		\vF_j^\pm\cdot\vN_f = 0\quad &\text{on}\quad\Gamma_{f}, \\
		 \vF^\pm\cdot\ve_3 = 0, \quad \int_{\Gamma^\pm} F_{ij} dx'=\gamma^\pm_i (i=1,2)\quad&\text{on}\quad\Gamma^{\pm}.
  	\end{array}
  \right.
\end{equation}
From Proposition \ref{prop:div-curl} and (\ref{eq:wh-est1}), we deduce that
\begin{align}
	\|\vu^\pm\|_{H^{s}(\Omega_f^\pm)}\le &C(M_1)\big(\|\tilde{\vom}^\pm\|_{H^{s-1}(\Omega_f^\pm)}+\|\partial_tf
	\|_{H^{s-\f12}}+|\beta^\pm_1|+|\beta^\pm_2|\big)\le C(M_1),\\
	\|\vF_j^\pm\|_{H^{s}(\Omega_f^\pm)}\le &C(M_1)\big(\|\tilde{\vG}_j^\pm\|_{H^{s-1}(\Omega_f^\pm)}+|\gamma^\pm_{1j}|+|\gamma^\pm_{2j}|\big)\le C(M_1).
\end{align}
Moreover, there holds
\begin{align*}
	\vu^\pm(0)=\vu_0^\pm,\quad \vF^\pm(0)=\vF_0^\pm.
\end{align*}

From the fact that
\begin{align*}
	\partial_t(\vu^\pm\cdot\vN_f)=\pa_t\vu^\pm\cdot\vN_f+\vu^\pm\cdot\partial_t\vN_f
	=(\partial_t\vu^\pm+\partial_3\vu^\pm\partial_tf)\cdot\vN_f+\vu^\pm\cdot\partial_t\vN_f
\end{align*}
on $\Gamma_f$, one can easily deduce that $\partial_t\vu^\pm$ satisfies
\begin{equation}
  \left\{
  	\begin{array}{ll}
  		\curl\partial_t\vu^\pm=\partial_t\tilde{\vom}^\pm,\quad\div\partial_t\vu^\pm=0\quad&\text{in}\quad\Om_f^{\pm},\\
		\partial_t\vu^\pm\cdot\vN_f=\partial_{t}^2f-\partial_tf\partial_3\vu^\pm
		\cdot\vN_f+u_1^\pm\partial_1\partial_tf+u_2^\pm\partial_2\partial_tf
		\quad&\text{on}\quad\Gamma_{f}, \\
		\partial_t\vu^\pm\cdot\ve_3=0, \quad \int_{\Gamma^\pm}\partial_tu_i^\pm dx=\partial_t\beta^\pm_i(i=1,2)\quad&\text{on}\quad\Gamma^{\pm}.
  	\end{array}
  \right.
\end{equation}
By Proposition \ref{prop:div-curl} again and (\ref{eq:wh-est2}), we get
\begin{align*}
	\|\partial_t\vu^\pm\|_{H^{s-1}(\Omega_f^\pm)}\le {C}(M_1, M_2),
\end{align*}
which implies
\begin{align}\nonumber
	\|\vu^\pm(t)\|_{L^\infty(\Gamma_f)}\le& \|\vu^\pm_0\|_{L^\infty(\Gamma_{f_0})}+\int_0^t\|\partial_t\vu^\pm\|_{L^\infty(\Gamma_f)}dt\\
	\le& \frac{M_0}{2}+T{C}(M_1,M_2).\nonumber
\end{align}
Applying similar arguments, we can show that
\begin{align*}
	&\|\partial_t\vF^\pm(t)\|_{H^{s-1}(\Omega_f^\pm)}\le {C}(M_1, M_2),\\
	&\|\vF^\pm(t)\|_{L^\infty(\Gamma_f)}\le \frac{M_0}2+T{C}(M_1,M_2).\label{h-infty}
\end{align*}
Moreover, we have
\begin{align*}
	&\|f(t)-f_0\|_{L^\infty}\le \|f(t)-f_0\|_{H^{s-\f12}}\le T\|\partial_tf\|_{H^{s-\f12}}\le TM_1, \\
	&|\Lambda(\vF^\pm,\vv)-\Lambda(\vF_0^\pm,\vv_0)|\le TC\big(\|\partial_t\vu^\pm\|_{L^\infty(\Gamma_f)},
	\|\partial_t\vF^\pm\|_{L^\infty(\Gamma_f)}\big)\le TC(M_1,M_2).
\end{align*}

Choose $T$ small enough such that
\begin{align*}
	TM_1\le \min\{\delta_0,c_0\},\quad TC(M_1)+T{C}(M_1,M_2)\le \frac {M_0}2,\quad TC(M_1, M_2)\le c_0,
\end{align*}
and $L_0=M_0$, $L_1=M_1$, $L_2={C}(M_1, M_2)$. Then we can obtain that for any $t\in [0,T]$:
\begin{itemize}
	\item $-(1-c_0)\le f(t,x')\le (1-c_0)$;
	\item $\Lambda(\vF^\pm,\vv)(t)\ge c_0$;
	\item $\|(\vu^\pm, \vF^\pm)(t)\|_{L^{\infty}(\Gamma_f)}\le L_0$;
	\item $\|f(t)-f_*\|_{H^{s-\f12}}\le \delta_0$;
	\item $\|f(t)\|_{H^{s+\f12}}+\|\pa_tf(t)\|_{H^{s-\f12}}+\|\vu^\pm(t)\|_{H^{s}(\Omega_f^\pm)}
	+\|\vF^\pm(t)\|_{H^{s}(\Omega_f^\pm)}\le L_1$;
	\item $\|(\partial_t\vu^\pm, \partial_t\vF^\pm)(t)\|_{L^{\infty}(\Gamma_f)}\le L_2$.
\end{itemize}

\subsection{Define the iteration map}

Given $(f,\vu^\pm,\vF^\pm)$ and define initial data as follows:
\begin{align*}
&\left(\bar f_1(0),\bar\theta(0), \bar\vom^\pm(0) , \bar\vG^\pm(0)\right)
=\big(f_0,(\partial_tf)_I, \vom_{*I}^\pm, \vj_{*I}^\pm\big).
\end{align*}
We can solve $\bar f_1$ and $(\bar\vom^\pm, \bar\vG^\pm)$ by the linearized system (\ref{sys:linear-H}) and (\ref{eq:vorticity-w-L}). We define
\begin{align*}
	&\bar\vom_{*}^\pm=\bar\vom^\pm\circ \Phi_{f}^\pm,\quad \bar\vG_{*}^\pm=\bar\vG^\pm\circ\Phi_{f}^\pm,\\
	&\bar\b^\pm_i(t)=\b^\pm_i(0)-\int^t_0\int_{\Gamma^\pm}u_s^\pm\pa_s u^\pm_i-\sum_{j=1}^3F^\pm_{sj}\pa_s F^\pm_{ij}dx'd\tau,\\
	&\bar\gamma^\pm_{ij}(t)=\gamma^\pm_{ij}(0)-\int^t_0\int_{\Gamma^\pm}u^\pm_s\pa_s F^\pm_{ij}-F^\pm_{sj}\pa_s u_i^\pm dx'd\tau.
\end{align*}
Then we have the iteration map $\mathcal{F}$ as follows
\begin{equation}
	\mathcal{F}\big(f,\vom_*^\pm, \vG_*^\pm, \beta^\pm_{i},\gamma^\pm_{ij}\big)
	\eqdefa \big(\bar{f},\bar{\vom}_*^\pm, \bar{\vG}_*^\pm,\bar{\beta}^\pm_{i},
	\bar{\gamma}^\pm_{ij}\big).
\end{equation}
To ensure $\langle \bar f\rangle=\langle f_0\rangle$ and $\int_{\bbT^2}\pa_t \bar f(t,x')dx'=0$ for $t\in [0,T]$, $\bar f$ in the above equation is given by
\begin{equation}
	\bar f(t,x')=\bar f_1(t,x')-\langle \bar f_1\rangle+\langle f_0\rangle.
\end{equation}
\begin{proposition}\label{prop:iteration map}
	There exist $M_1, M_2, T>0$ depending on $c_0, \delta_0, M_0$ so that $\mathcal{F}$ is a map from $\mathcal{X}(T, M_1,M_2)$ to itself.
\end{proposition}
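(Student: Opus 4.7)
The plan is to verify that $\mathcal{F}$ lands in $\mathcal{X}(T,M_1,M_2)$ by choosing $M_1 > 2M_0$, a large $M_2$, and a small $T$ in a consistent order, then invoking the linear estimates of Section~4 together with the recovery step from Section~5.1. First I would fix $M_1 = 3M_0$ and use the construction of Section~5.1 (which is already carried out in the excerpt) to produce $(\vu^\pm,\vF^\pm)$ from $(f,\vom_*^\pm,\vG_*^\pm,\beta^\pm_i,\gamma^\pm_{ij})\in\mathcal{X}(T,M_1,M_2)$ satisfying the hypotheses (\ref{ass:regularity})--(\ref{ass:stability}) with constants $L_0=M_0$, $L_1=M_1$, $L_2=C(M_1,M_2)$, provided $T$ is chosen small depending on $c_0,\delta_0,M_0,M_1,M_2$.

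Next I would bound $\bar f_1$ and $\bar\theta$. Applying Lemma~\ref{lem:non-g} we get $\|\mathfrak g(t)\|_{H^{s-1/2}}\le C(M_1)$, and then Proposition~\ref{prop:f-L} gives
\begin{align*}
\sup_{t\in[0,T]}\big(\|\partial_t\bar f_1\|_{H^{s-1/2}}^2+\|\bar f_1\|_{H^{s+1/2}}^2\big)
\le C(c_0,M_0)\big(M_0^2+TC(M_1)\big)\,e^{C(c_0,M_1,M_2)T}.
\end{align*}
For $\partial_t^2\bar f_1$ I read the second equation of (\ref{sys:linear-H}) off as an identity, so that $\|\partial_t^2\bar f_1\|_{H^{s-3/2}}\le C(M_1)(\|\bar f_1\|_{H^{s+1/2}}+\|\partial_t\bar f_1\|_{H^{s-1/2}})+\|\mathfrak g\|_{H^{s-3/2}}\le C(M_1)$. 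Since the correction $\bar f=\bar f_1-\langle\bar f_1\rangle+\langle f_0\rangle$ changes only the zeroth Fourier mode, it preserves all homogeneous Sobolev norms and makes $\langle\bar f\rangle=\langle f_0\rangle$; one checks $\int\partial_t\bar f\,dx'=0$ because the right-hand side of the $\bar\theta$-equation integrates to zero on $\bbT^2$ (derivative terms and $\mathcal P$-projected terms). Then $\|\bar f(t)-f_*\|_{H^{s-1/2}}\le T\|\partial_t\bar f\|_{H^{s-1/2}}\le T M_1\le \delta_0$ for $T$ small.

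For the vorticity part I would apply Proposition~\ref{prop:ex} on $\Omega_f^\pm$, which yields
\begin{align*}
\sup_{t\in[0,T]}\big(\|\bar\vom^\pm\|_{H^{s-1}(\Omega_f^\pm)}^2+\textstyle\sum_j\|\bar\vG_j^\pm\|_{H^{s-1}(\Omega_f^\pm)}^2\big)\le (1+M_0^2)\,e^{C(M_1)T},
\end{align*}
and then pull back via $\Phi_f^\pm$ using Lemma~\ref{lem:basic} to bound $\|(\bar\vom_*^\pm,\bar\vG_*^\pm)\|_{H^{s-1}(\Omega_*^\pm)}$ by the same quantity up to a constant depending on $f_*$ and $\delta_0$. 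The time derivative bound in $H^{s-2}(\Omega_*^\pm)$ comes from reading $\partial_t\bar\vom^\pm,\partial_t\bar\vG_j^\pm$ off the system (\ref{eq:vorticity-w-L}): the RHS is a product of elements in $H^{s-1}$ times $H^{s-2}$, which is controlled by $C(M_1)$; pulling back via $\Phi_f^\pm$ adds a factor depending on $\|\partial_t\Phi_f^\pm\|$ which is controlled by $\|\partial_t f\|_{H^{s-1/2}}\le M_1$. For the boundary averages, $|\bar\beta^\pm_i(t)|\le |\beta^\pm_i(0)|+TC(M_1)$ and $|\partial_t\bar\beta^\pm_i|\le C(M_1)$ directly from the integral definitions and the trace theorem, and similarly for $\bar\gamma^\pm_{ij}$.

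Finally I would close the bootstrap: choose $M_1=3M_0$, then $M_2=C(M_1)$ (using the constants produced in the previous step which depend only on $M_1$, not on $M_2$, for the $\partial_t$-bounds of $\bar\vom_*,\bar\vG_*,\bar\beta,\bar\gamma$ and $\partial_t^2\bar f$), and finally $T$ so small that the exponential factors $e^{C(c_0,M_1,M_2)T}$ are close to $1$ and all the $O(T)$ contributions are absorbed, while also ensuring $T M_1\le\min(\delta_0,c_0)$ and the various smallness conditions from Section~5.1. The main obstacle I foresee is keeping the bootstrap self-consistent on the time-derivative bound: the constant $L_2$ used in Proposition~\ref{prop:f-L} is $C(M_1,M_2)$, which enters the exponential, so one must verify that choosing $M_2$ only in terms of $M_1$ (and not of itself) is possible; this is why the estimate for $\partial_t^2\bar f$ is derived by reading off the equation rather than by differentiating (\ref{sys:linear-H}) in time, keeping $M_2$ out of the right-hand side.
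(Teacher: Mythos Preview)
Your overall strategy matches the paper's proof almost exactly: recover $(\vu^\pm,\vF^\pm)$ via Section~5.1, apply Proposition~\ref{prop:f-L} and Lemma~\ref{lem:non-g} for $(\bar f,\bar\theta)$, apply Proposition~\ref{prop:ex} (or~\ref{prop:vorticity}) for $(\bar\vom^\pm,\bar\vG^\pm)$, read the second-order time estimates directly off the equations so that their constants depend only on $M_1$, and then choose $M_2=C(M_1)$ followed by $T$ small. Your observation at the end about why $M_2$ must not appear in the $\partial_t^2\bar f$ and $\partial_t\bar\vom_*$ bounds is exactly the point.

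There is, however, one concrete gap: the fixed choice $M_1=3M_0$ does not work. The bound coming out of Proposition~\ref{prop:f-L} is, after sending $T\to 0$, of the form $C(c_0,M_0)$ (the constant there genuinely depends on $c_0$ through the equivalence~(\ref{linear:equi-norm-2}), since the stability margin $c_0$ controls how the energy $E_s$ dominates the Sobolev norm). If $c_0$ is small this constant can be arbitrarily large compared to $M_0$, so there is no reason for $C(c_0,M_0)\le 3M_0$. The paper handles this by first setting $M_2=C(M_1)$ as a function of $M_1$, and \emph{then} choosing $M_1$ large enough (depending on $c_0$ and $M_0$) so that $C(c_0,M_0)<M_1/2$; only after both are fixed is $T$ shrunk. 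Replace your hard-coded $M_1=3M_0$ by this two-step choice and the argument closes. A minor aside: your justification that $\int\partial_t\bar f\,dx'=0$ via the structure of the $\bar\theta$-equation is unnecessary, since $\bar f=\bar f_1-\langle\bar f_1\rangle+\langle f_0\rangle$ already forces $\langle\bar f\rangle$ to be constant in time.
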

\begin{proof}
	We know that the initial conditions are automatically satisfied according to the Definition \ref{def:X}. From Proposition \ref{prop:f-L} and Proposition \ref{prop:vorticity}, we have
	\begin{align*}\nonumber
		&\sup_{t\in[0,T]}\left( \|\bar{f}(t)\|_{H^{s+\f12}}+\|\partial_t\bar{f}(t)\|_{H^{s-\f12}}+\|\bar\vom_*^\pm(t)\|_{H^{s-1}
		(\Omega_*^\pm)}+\|\bar\vG_*(t)\|_{H^{s-1}(\Omega_*^\pm)}\right)\\
		&\le C(c_0,M_0)e^{C(M_1,M_2)T}.
	\end{align*}
	From the equation (\ref{sys:linear-H}), (\ref{eq:vorticity-w-L}), we deduce that
	\begin{equation}
		\sup_{t\in[0,T]}\Big(\|\partial_{t}^2\bar f\|_{H^{s-\f32}}
		+\|(\partial_t\bar\vom_*, \partial_t\bar\vG_*)\|_{H^{s-2}(\Omega_*^\pm)}\Big)\le C(M_1).		
	\end{equation}
	Obviously, we have
	\begin{align*}
		&|\bar\beta^\pm_i(t)|+|\bar\gamma^\pm_{ij}(t)|\le M_0+TC(M_1),\\
		&|\partial_t\bar\beta^\pm_i(t)|+|\partial_t\bar\gamma^\pm_{ij}(t)|\le C(M_1),\\
		&\|\bar f(t)-f_*\|_{H^{s-\f12}}\le \int_0^t\|\pa_t \bar f(\tau)\|_{H^{s-\f12}}d\tau.		
	\end{align*}
	We firstly take $M_2=C(M_1)$ and then take $M_1$ large enough so that
	\begin{align}
		C(c_0,M_0)<M_1/2.
	\end{align}
	Next, we take $T$ small enough which only depends only on $c_0, \delta_0, M_0$ so that all other conditions in Definition \ref{def:X} are satisfied.
\end{proof}

\subsection{Contraction of the iteration map}
Now we prove the contraction of the iteration map $\mathcal{F}$.
 Let $\big(f^A, \vom_*^{\pm A}, \vG_*^{\pm A},\beta^{\pm A}_{i}, \gamma^{\pm A}_{ij}\big), \big(f^B$, $\vom_*^{\pm B},
 \vG_*^{\pm B}, \beta^{\pm B}_{i}, \gamma^{\pm B}_{ij}\big)\in \mathcal{X}(T, M_1,M_2)$, and
 $\big(\bar f^C,\bar \vom_*^{\pm C}, \bar\vG_*^{\pm C}, \bar\beta^{\pm C}_{i},\bar\gamma^{\pm C}_{ij}\big)=\mathcal{F}
\big(f^C$, $\vom_*^{\pm C}, \vG_*^{\pm C},\beta^{\pm C}_{i}, \gamma^{\pm C}_{ij}\big)$ for $C=A,B$.
In addition, we use $g^D$  to denote the difference $g^A-g^B$.  For instance,
$f^D={f}^A-{f}^B, \vom_*^{\pm D}=\vom_*^{\pm A}-\vom_*^{\pm B}$.

\begin{proposition}\label{prop:contraction}
	There exists $T>0$ depending on $c_0, \delta_0, M_0$ so that
	\begin{align}\nonumber
		\bar E^D\triangleq&~\sup_{t\in[0,T]}\Big(\|\bar{f}^D(t)\|_{H^{s-\f12}}+\|\partial_t\bar{f}^D(t)\|_{H^{s-\f32}}+\|\bar\vom_*^{\pm D}(t)\|_{H^{s-2}(\Omega_*^\pm)}
		\\&\qquad+\|\bar\vG_*^{\pm D}(t)\|_{H^{s-2}(\Omega_*^\pm)}+|\bar\beta^{\pm D}_{i}(t)|+|\bar\gamma^{\pm D}_{ij}(t)|\Big)\nonumber\\\nonumber
		\le&~\frac12\sup_{t\in[0,T]}\Big( \|{f}^D(t)\|_{H^{s-\f12}}+\|\partial_t{f}^D(t)\|_{H^{s-\f32}}
		+\|\vom_*^{\pm D}(t)\|_{H^{s-2}(\Omega_*^\pm)}\\&\qquad\qquad+\|\vG_*^{\pm D}(t)\|_{H^{s-2}(\Omega_*^\pm)}
		+|\beta^{\pm D}_{i}(t)|+|\gamma^{\pm D}_{ij}(t)|\Big)\triangleq E^D.\nonumber
	\end{align}
\end{proposition}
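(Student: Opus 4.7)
The overall strategy mirrors the uniform estimates of Section 4 but carried out at one lower level of regularity, so that coefficient differences (which are only bounded in the strong $\mathcal{X}$-norm) can be paired against the solution in a weaker norm. The half-derivative loss in the statement of $\bar{E}^D$ vs.\ $E^D$ is precisely what pays for one factor of $H^s$ absorbed into the coefficients.

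First, I would derive evolution equations for each difference. For the height function, $\bar f_1^D \eqdefa \bar f_1^A - \bar f_1^B$ satisfies a hyperbolic system whose principal part is \eqref{sys:linear-H} with $(f^A,\vu^A,\vF^A)$-coefficients, and whose forcing decomposes as (i) the coefficient difference $\mathcal{L}^A - \mathcal{L}^B$ applied to $(\bar f_1^B,\partial_t \bar f_1^B)$, plus (ii) the inhomogeneity difference $\mathfrak{g}^A - \mathfrak{g}^B$; both vanish at $t=0$. Analogously, $\bar\vom^{\pm D}$ and $\bar\vG^{\pm D}$ solve \eqref{eq:vorticity-w-L} with $(\vu^A,\vF^A)$-coefficients plus source terms of the form $(\vu^A - \vu^B)\cdot\nabla\bar\vom^B$, $(\vF_j^A - \vF_j^B)\cdot\nabla\bar\vG_j^B$, and lower-order analogues. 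The tangential averages $\bar\beta^{\pm D}_i,\bar\gamma^{\pm D}_{ij}$ are given by explicit integrals over $\Gamma^\pm$ of differences of $\vu^\pm$ and $\vF^\pm$.

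Second, I would apply Proposition \ref{prop:f-L} and Proposition \ref{prop:vorticity} at one regularity index lower, namely estimating $\bar f^D$ in $H^{s-1/2}$, $\partial_t\bar f^D$ in $H^{s-3/2}$, and $\bar\vom_*^{\pm D},\bar\vG_*^{\pm D}$ in $H^{s-2}$. The energy arguments there are insensitive to the particular Sobolev index provided it is at most the index that bounds the coefficients. This yields, schematically,
\begin{align*}
\bar E^D \le C(c_0,M_1,M_2)\Big(\|\mathfrak{g}^A-\mathfrak{g}^B\|_{L^1_T H^{s-3/2}} + \|(\mathcal{L}^A-\mathcal{L}^B)[\bar f_1^B,\partial_t\bar f_1^B]\|_{L^1_T H^{s-3/2}} + \text{vorticity sources}\Big) e^{C(M_1,M_2)T}.
\end{align*}

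Third, I would bound each source in terms of $E^D$ and $T$. Coefficient differences split into (a) differences of the height function, propagated into the bulk through the harmonic coordinate map $\Phi_f^\pm$, whose dependence on $f$ is Lipschitz in the weak norm $\|f^D\|_{H^{s-1/2}}$; and (b) differences of $\vu^\pm, \vF^\pm$, recovered from $\vom_*^{\pm D}, \vG_*^{\pm D}, \beta^{\pm D}, \gamma^{\pm D}, \partial_t f^D$ via the div-curl system \eqref{div-curl-temp} and Proposition \ref{prop:div-curl}. The operator difference $\mathcal{N}_{f^A}^\pm-\mathcal{N}_{f^B}^\pm$ and the corresponding difference of $\widetilde{\mathcal{N}}_f^{-1}$ are likewise Lipschitz in $f^D$ in the appropriate scale. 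Combining these, each source is pointwise in time bounded by $C(M_1,M_2)\,E^D$, so the time integral contributes a factor $T$; the bounds on $|\bar\beta^{\pm D}_i|$ and $|\bar\gamma^{\pm D}_{ij}|$ follow immediately from the defining integral formulas. Altogether,
\begin{align*}
\bar E^D \le C(c_0,M_1,M_2)\,T\,e^{C(M_1,M_2)T}\,E^D,
\end{align*}
and shrinking $T$ (depending only on $c_0,\delta_0,M_0$ through $M_1,M_2$) delivers the factor $1/2$.

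The main technical obstacle will be verifying the Lipschitz dependence on $f$, in the weakened norm $H^{s-1/2}$, of the harmonic extension $\Phi_f^\pm$, the Dirichlet--Neumann operator $\mathcal{N}_f^\pm$, its symmetrized inverse $\widetilde{\mathcal{N}}_f^{-1}$, and the div-curl solution operator, together with tracking how these Lipschitz estimates propagate through the nonlinear inhomogeneity $\mathfrak{g}$. These are analogues of the bounds used in \cite{SWZ1}, but the presence of the deformation tensor $\vF^\pm$ and the requirement that the stability condition \eqref{condition:s2} remain uniform along both iterates (so that Proposition \ref{prop:f-L} can be invoked with a single constant $c_0$) demand careful bookkeeping of the quadratic $\vF\vF^\top$ contributions.
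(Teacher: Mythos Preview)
Your plan is correct and matches the paper's own argument essentially step for step: write the difference equations with $A$-coefficients and sources built from coefficient differences, control $\vu^{\pm D}_*,\vF^{\pm D}_*$ through the div-curl system and the Lipschitz dependence of $\Phi_f^\pm$ on $f$, re-run the energy estimates of Propositions~\ref{prop:f-L} and~\ref{prop:vorticity} at one Sobolev index lower with zero initial data, and close by taking $T$ small. The paper obtains the slightly sharper factor $C(e^{CT}-1+T)$ in place of your $CT\,e^{CT}$, but both vanish as $T\to 0$ and the conclusion is the same.
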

\begin{proof}
Firstly, we have following elliptic estimate
	\begin{align*}
		\|\Phi_{f^A}^\pm-\Phi_{f^B}^\pm\|_{H^{s-1}(\Om_*^\pm)}\le C(M_1)\|f^A-f^B\|_{H^{s-\f12}}\le CE^D.
	\end{align*}
We can not estimate the difference between $\vu^A$ and $\vu^B$ directly, since they are defined on different regions. For this end, we introduce for $C=A,B$,
	\begin{align*}
		\vu^{\pm C}_*=\vu^{\pm C}\circ\Phi_{f^C}^{\pm},\quad
		\vF_{j*}^{\pm C}=\vF_j^{\pm C}\circ\Phi_{f^C}^{\pm}.
	\end{align*}

Now we show that
	\begin{align}\label{eq:uh-d}
		\|\vu^{\pm D}_*\|_{H^{s-1}(\Om^\pm_*)}+\|\vF_{j*}^{\pm D}\|_{H^{s-1}(\Om^\pm_*)}\le CE^D.
	\end{align}
We introduce
	\begin{align*}
		&\curl_C \vv_*^\pm=\big(\curl (\vv_*^\pm\circ(\Phi^{\pm}_{f^C})^{-1})\big) \circ\Phi_{f^C}^\pm,\\
		&\div_C \vv_*^\pm =\big(\div(\vv_*^\pm\circ(\Phi^{\pm}_{f^C})^{-1}\big) \circ\Phi_{f^C}^\pm,
	\end{align*}
for vector field $\vv_*^\pm$ defined on $\Omega_*^\pm$.	Then it holds for $C=A,B$ that
	\begin{equation}
	  \left\{
	  	\begin{array}{ll}
	  		\curl_C \vu^{\pm C}_*=\tilde{\vom}^{\pm C}_*\quad&\text{in}\quad\Om^{\pm}_*,\\
			\div_C \vu_*^{\pm C}=0\quad&\text{in}\quad\Om^{\pm}_*,\\
			\vu^{\pm C}_*\cdot\vN_{f^C}=\partial_tf^C\quad&\text{on}\quad\Gamma_{*},\\
			\vu^{\pm C}\cdot\ve_3 = 0,\quad \int_{\Gamma^\pm} u_i^{\pm C}dx'=\beta_i^{\pm C}\quad&\text{on}\quad\Gamma^{\pm}.
	  	\end{array}
	  \right.
	\end{equation}
Thus, we can deduce
	\begin{equation}
	  \left\{
	  	\begin{array}{ll}
	  		\curl_A\vu^{\pm D}_*=\tilde{\vom}^{\pm D}_*+(\curl_B-\curl_A)\vu^{\pm B}_*\quad&\text{in}\quad \Om^{\pm}_*,\\
			\div_A\vu^{\pm D}_*=(\div_B-\div_A)\vu^{\pm B}_*\quad&\text{in}\quad\Om^{\pm}_*,\\
			\vu^{\pm D}_*\cdot\vN_{f^A}=\partial_tf^D+\vu^{\pm B}_*\cdot(\vN_{f^B}-\vN_{f^A})\quad&\text{on}\quad\Gamma_{*}, \\
			\vu^{\pm D}_*\cdot\ve_3=0,\quad \int_{\Gamma^\pm}u_i^{\pm D}dx'=\beta_i^{\pm D}\quad &\text{on}\quad\Gamma^{\pm}.
	  	\end{array}
	  \right.
	\end{equation}
It is direct to obtain
	\begin{align*}
		\|(\curl_B-\curl_A)\vu^{\pm B}_*\|_{H^{s-2}(\Omega_*^\pm)}\le&~ C\|\Phi_{f^A}^\pm-\Phi_{f^B}^\pm\|_{H^{s-1}(\Omega_*^\pm)}\\
		\le& C\|f^D\|_{H^{s-\f12}}\le CE^D,
	\end{align*}
and similarly,
	\begin{align*}
		&\|(\div_B-\div_A)\vu^{\pm B}_*\|_{H^{s-2}(\Omega_*^\pm)}\le CE^D,\\
		&\|\vu^{\pm B}_*\cdot(\vN_{f^B}-\vN_{f^B})\|_{H^{s-\f32}}\le CE^D.
	\end{align*}
Then applying Proposition \ref{prop:div-curl} yields that
	\begin{align}\nonumber
		\|\vu^{\pm D}_*\|_{H^{s-1}(\Om_*^\pm)}\le C\left(\|\tilde{\vom}^{\pm D}_*\|_{H^{s-2}(\Omega_*^\pm)}
		+\|\partial_tf^D\|_{H^{s-1}}+E^D\right)\le CE^D.
	\end{align}
Similarly, we have
	\begin{align}\nonumber
		\|\vF^{\pm D}_*\|_{H^{s-1}(\Om_*^\pm)}\le& CE^D.
	\end{align}

	Recall that
	\begin{align*}
		\partial_t\bar{f}_1^D=&~\bar\theta^D,\\\nonumber
		\partial_t\bar{\theta}^D=&-\frac{2}{\r^++\r^-}
		\left((\r^+\underline{u} ^{+A}_1+\r^-\underline{u} ^{-A}_1)\partial_1\bar\theta^D+(\r^+\underline{u} ^{+A}_2+\r^-\underline{u} ^{-A}_2)\partial_2\bar\theta^D\right)\\
		&-\frac{1}{\r^++\r^-}\sum^2_{s,r=1}\big(\r^+\underline{u} _s^{+A}\underline{u} ^{+A}_r- \rho^+\sum^3_{j=1}\underline{F} _{sj}^{+A}\underline{F} _{rj}^{+A}\big)\partial_s\partial_r\bar f^D_1\\
		&-\frac{1}{\r^++\r^-}\sum^2_{s,r=1}\big(\r^-\underline{u} ^{-A}_s\underline{u} ^{-A}_r-\rho^-\sum^3_{j=1} \underline{F} _{sj}^{-A}\underline{F} _{rj}^{-A}\big)\partial_s\partial_r\bar f^D_1+\mathfrak R,\label{linear:cauchy:eq-2}
	\end{align*}
	where
	\begin{align}\nonumber
		\mathfrak{R}=&-\frac{2}{\r^++\r^-}
		\left((\r^+\underline{u} ^{+D}_1+\r^-\underline{u} ^{-D}_1)\partial_1\bar\theta^B+(\r^+\underline{u} ^{+D}_2+\r^-\underline{u} ^{-D}_2)\partial_2\bar\theta^B\right)\\ \nonumber
		&-\frac{1}{\r^++\r^-}\sum^2_{s,r=1}\big(\r^+\underline{u} _s^{+A}\underline{u} ^{+A}_r- \rho^+\sum^3_{j=1}\underline{F} _{sj}^{+A}\underline{F} _{rj}^{+A}\big)\partial_s\partial_r\bar f^B_1\\\nonumber
		&+\frac{1}{\r^++\r^-}\sum^2_{s,r=1}\big(\r^+\underline{u} _s^{+B}\underline{u} ^{+B}_r- \rho^+\sum^3_{j=1}\underline{F} _{sj}^{+B}\underline{F} _{rj}^{+B}\big)\partial_s\partial_r\bar f^B_1\\\nonumber
		&-\frac{1}{\r^++\r^-}\sum^2_{s,r=1}\big(\r^-\underline{u} ^{-A}_s\underline{u} ^{-A}_r-\rho^-\sum^3_{j=1} \underline{F} _{sj}^{-A}\underline{F} _{rj}^{-A}\big)\partial_s\partial_r\bar f^B_1\\\nonumber
		&+\frac{1}{\r^++\r^-}\sum^2_{s,r=1}\big(\r^-\underline{u} ^{-B}_s\underline{u} ^{-B}_r-\rho^-\sum^3_{j=1} \underline{F} _{sj}^{-B}\underline{F} _{rj}^{-B}\big)\partial_s\partial_r\bar f^B_1\\\nonumber
		&+\mathfrak{g}^A-\mathfrak{g}^B,\nonumber
	\end{align}
and	for $C=A,B, $
\begin{align} \nonumber
	\mathfrak g^C=&\frac{1}{\r^++\r^-}(\mathcal{N}^+_{f^C}-\mathcal{N}^-_{f^C})\widetilde{\mathcal{N}}^{-1}_{f^C}\Big(\sum^2_{s,r=1}\big(\underline{u} _s^{+C}\underline{u} ^{+C}_r-\sum^3_{j=1}\underline{F} ^{+C}_{sj}\underline{F} _{rj}^{+C}\\\nonumber
	&\quad-\underline{u} ^{-C}_s\underline{u} ^{-C}_r+\sum^3_{j=1}\underline{F} _{sj}^{-C}\underline{F} _{rj}^{-C}\big)\partial_s\partial_r{f^C}\Big)\\\nonumber
	&+\frac{1}{\r^++\r^-}(\mathcal{N}^+_{f^C}-\mathcal{N}^-_{f^C})\widetilde{\mathcal{N}}^{-1}_{f^C}
	\big((\underline{u} ^{+C}_1-\underline{u} ^{-C}_1)\partial_1\theta^C+(\underline{u} ^{+C}_2-\underline{u} ^{-C}_2)\partial_2\theta^C\big)\\\nonumber
	&+\frac{1}{\r^++\r^-}N_{f^C}\cdot\underline{\nabla(\r^+p_{\vu^{+C}, \vu^{+C}}-\rho^+\sum^3_{j=1}p_{\vF^{+C}_j, \vF^{+C}_j})}\\\nonumber
	&+\frac{1}{\r^++\r^-}N_{f^C}\cdot\underline{\nabla(\r^-p_{\vu^{-C},\vu^{-C}}-\rho^-\sum^3_{j=1}p_{\vF^{-C}_j, \vF^{-C}_j})}\\\nonumber
	&-\frac{1}{\r^++\r^-}(\mathcal{N}^+_{f^C}-\mathcal{N}^-_{f^C})\widetilde{\mathcal{N}}^{-1}_{f^C} N_{f^C}\cdot\underline{\nabla\big(p_{\vu^{+C}, \vu^{+C}}-\sum^3_{j=1}p_{\vF^{+C}_j, \vF^{+C}_j}-p_{\vu^{-C},\vu^{-C}}+\sum^3_{j=1}p_{\vF^{-C}_j, \vF^{-C}_j}\big)}.\\\nonumber
\end{align}
	Here $\underline{v}^C(x_1,x_2)$ is the trace of $v$ on $\Gamma_{f^C}$ which interpreted as $v(x_1,x_2, f^C(x_1,x_2))$.

Similar to the proof of Lemma \ref{lem:non-g}, we can show that
\begin{equation*}
	\|\mathfrak{R}\|_{H^{s-\f32}}\le CE^D.
\end{equation*}
We denote
\begin{align}\nonumber
	\bar F_{s}^D(\partial_t\bar{f}_1^D,\bar{f}_1^D)\triangleq&\big\|(\partial_t+w^A_i\partial_i)\langle \na\rangle^{s-\f32}\bar{f}_1^D\big\|_{L^2}^2
	-\big\|v_i^A\partial_i\langle \na\rangle^{s-\f32}\bar{f}_1^D\big\|_{L^2}^2\\
	&+\frac{\rho^+}{\r^++\r^-}\sum\limits^3_{j=1}\big\|F^{+A}_{ij}\partial_i\langle \na\rangle^{s-\f32}\bar{f}_1^D\big\|_{L^2}^2
	+\frac{\rho^-}{\r^++\r^-}\sum\limits^3_{j=1}\big\|F^{-A}_{ij}\partial_i\langle \na\rangle^{s-\f32}\bar{f}_1^D\big\|_{L^2}^2.\nonumber
\end{align}
Following the proof of Proposition \ref{prop:f-L}, one can deduce that
\begin{align*}
	\frac{d}{dt}\Big(\bar F_{s}^D(\partial_t\bar{f}_1^D,\bar{f}_1^D)+\|\bar{f}_1^D\|_{L^2}^2+\|\partial_t\bar{f}_1^D\|_{L^2}^2\Big)
	\le  C\big(E^D+\bar E_1^D)£¬
\end{align*}
where
\begin{align*}
	\bar E_1^D&~=\sup_{t\in[0,T]}\Big(\|\bar{f}_1^D(t)\|_{H^{s-\f12}}+\|\partial_t\bar{f}_1^D(t)\|_{H^{s-\f32}}\Big).
\end{align*}
Recalling the fact that
\begin{align*}
	\|\bar{f}_1^D\|_{H^{s-\f12}}^2+\|\partial_t\bar{f}_1^D\|_{H^{s-\f32}}^2\le C
	\Big(\bar F_{s}^D(\bar{f}_1^D,\partial_t\bar{f}_1^D)+\|\bar{f}_1^D\|_{L^2}^2
	+\|\partial_t\bar{f}_1^D\|_{L^2}^2\Big),
\end{align*}
 we obtain
\begin{align*}
	\sup_{t\in[0,T]}\left(\|\bar{f}_1^D(t)\|_{H^{s-1}}+\|\partial_t\bar{f}_1^D
	(t)\|_{H^{s-\f32}}\right)\le C(e^{CT}-1)E^D,
\end{align*}
which induces
\begin{align}\label{eq:f-d}
	\sup_{t\in[0,T]}\left(\|\bar{f}^D(t)\|_{H^{s-1}}+\|\partial_t\bar{f}^D
	(t)\|_{H^{s-\f32}}\right)\le C(e^{CT}-1)E^D.
\end{align}

Similar to the proof of Proposition \ref{prop:vorticity}, it can be verified that
\begin{align}\label{eq:wj-d}
	\sup_{t\in[0,T]}\left(\|\bar\vom_*^D(t)\|_{H^{s-2}(\Omega_*^\pm)}+\|\bar\vG_*^D\|_{H^{s-2}(\Omega_*^\pm)}\right)\le C(e^{CT}-1)E^D.
\end{align}

From the equation
\begin{align*}
	\bar\b^{\pm C}_i(t)=\bar\b^{\pm C}_i(0)-\int^t_0\int_{\Gamma^\pm}u_s^{\pm C}\pa_s u^{\pm C}_i-\sum^3_{j=1}F^{\pm C}_{sj}\pa_s F^{\pm C}_{ij}dx'd\tau,
\end{align*}
we have
\begin{align}\label{eq:be-d}
	|\bar\beta^{\pm D}_i(t)|\le|\beta^{\pm D}_{iI}|+TCE^D.
\end{align}
It is similar to show that
\begin{align}\label{eq:ga-d}
	|\bar\gamma^{\pm D}_{ij}(t)|\le|\gamma^{\pm D}_{ijI}|+TCE^D.
\end{align}
Thus, thanks to (\ref{eq:uh-d}) and (\ref{eq:f-d})--(\ref{eq:ga-d}), we can conclude that
\begin{align*}
	\bar{E}^D&\le C(e^{CT}-1+T)E^D.
\end{align*}
Taking $T$ small enough depending on $c_0, \delta_0, M_0$, we obtain the proof of the proposition.
\end{proof}
\subsection{The limit system}
It follows from Proposition \ref{prop:iteration map} and Proposition \ref{prop:contraction} that
 there exists a unique fixed point $(f,\vom_*^\pm, \vG_*^\pm,\beta^\pm_i, \gamma^\pm_{ij})$
of the map $\mathcal{F}$  in $\mathcal{X}(T,M_1,M_2)$. In addition,  from the construction of $\mathcal{F}$, we have that
$(f,\vom^\pm, \vG^\pm,\beta^\pm_i, \gamma^\pm_{ij})
=(f,\vom_*^\pm\circ\Phi_f^{-1}, \vG_*^\pm\circ\Phi_f^{-1},\beta^\pm_i, \gamma^\pm_{ij})$ satisfies
\begin{align}
	\partial_tf=&\mathcal P\theta\label{eq:limit-theta},\\\nonumber
	\partial_t\theta=&-\frac{2}{\r^++\r^-}
	\big((\r^+\underline{u}^+_1+\r^-\underline{u} ^-_1)\partial_1\theta+(\r^+\underline{u} ^+_2+\r^-\underline{u} ^-_2)\partial_2\theta\big)\\\nonumber
	&-\frac{1}{\r^++\r^-}\sum^2_{s,r=1}\big(\r^+\underline{u} _s^+\underline{u} ^+_r- \rho^+\sum^3_{j=1}\underline{F} _{sj}^+\underline{F} _{rj}^+\big)\partial_s\partial_rf\\\nonumber
	&-\frac{1}{\r^++\r^-}\sum^2_{s,r=1}\big(\r^-\underline{u} ^-_s\underline{u} ^-_r-\rho^-\sum^3_{j=1} \underline{F} _{sj}^-\underline{F} _{rj}^-\big)\partial_s\partial_rf\\\nonumber
	&+\frac{1}{\r^++\r^-}(\mathcal{N}^+_f-\mathcal{N}^-_f)\widetilde{\mathcal{N}}^{-1}_f\mathcal P\big(\sum^2_{s,r=1}\big(\underline{u} _s^+\underline{u} ^+_r-\sum^3_{j=1}\underline{F} _{sj}^+\underline{F} _{rj}^+\big)\partial_s\partial_rf\big)\\\nonumber
	&-\frac{1}{\r^++\r^-}(\mathcal{N}^+_f-\mathcal{N}^-_f)\widetilde{\mathcal{N}}^{-1}_f\mathcal P\big(\sum^2_{s,r=1}\big(\underline{u} ^-_s\underline{u} ^-_r-\sum^3_{j=1}\underline{F} _{sj}^-\underline{F} _{rj}^-\big)\partial_s\partial_rf\big)\\\nonumber
	&+\frac{1}{\r^++\r^-}(\mathcal{N}^+_f-\mathcal{N}^-_f)\widetilde{\mathcal{N}}^{-1}_f\mathcal P
	\big((\underline{u}^+_1-\underline{u} ^-_1)\partial_1\theta+(\underline{u} ^+_2-\underline{u} ^-_2)\partial_2\theta\big)\\\nonumber
	&+\frac{1}{\r^++\r^-}N_f\cdot\underline{\nabla(\r^+p_{\vup, \vup}-\rho^+\sum^3_{j=1}p_{\vF^+_j, \vF^+_j})}\\\nonumber
	&+\frac{1}{\r^++\r^-}N_f\cdot\underline{\nabla(\r^-p_{\vum,\vum}-\rho^-\sum^3_{j=1}p_{\vF^-_j, \vF^-_j})}\\ \label{limit-f}
	&-\frac{1}{\r^++\r^-}(\mathcal{N}^+_f-\mathcal{N}^-_f)\widetilde{\mathcal{N}}^{-1}_f\mathcal P N_f\cdot\underline{\nabla\big(p_{\vup, \vup}-\sum^3_{j=1}p_{\vF^+_j, \vF^+_j}-p_{\vum,\vum}+\sum^3_{j=1}p_{\vF^-_j, \vF^-_j}\big)},
\end{align}
where $(\vu^\pm,\vF^\pm)$ sovles the div-curl system
\begin{equation}
  \left\{
  	\begin{array}{ll}
  		\curl \vu^\pm=P_f^{div}\vom^\pm,\quad \div\vu^\pm=0\quad &\text{in} \quad \Om_f^\pm,\\
		\vu^\pm\cdot\vN_f=\partial_tf\quad&\text{on}\quad\Gamma_f,\\
		u_3^\pm=0\quad&\text{on}\quad\Gamma^\pm,\\
		\int_{\Gamma^\pm}u_i^\pm dx'=\beta_i^\pm,&\\
		\partial_t\beta_i^\pm~=-\int_{\Gamma^\pm}(u_j^\pm\partial_ju_i^\pm-\sum\limits _{j=1}^3F^\pm_{sj}\pa_s F^\pm_{ij})dx',&
  	\end{array}
  \right.
\end{equation}
and
\begin{equation}
  \left\{
  	\begin{array}{ll}
  		\curl\vF_j^\pm=P_f^{div}\vG_j^\pm,\quad \div \vF_j^\pm=0 &\text{in} \quad \Om_f^\pm,\\
		\vF_j^\pm\cdot\vN_f=0 &\text{on}\quad\Gamma_f,\\
		F_{3j}^\pm=0&\text{on}\quad\Gamma^\pm,\\
		\int_{\Gamma^\pm}F_{ij}^\pm dx'=\gamma_{ij}^\pm,\\
		\partial_t\gamma_{ij}^\pm=-\int_{\Gamma^\pm}(u^\pm_s\pa_s F^\pm_{ij}-F^\pm_{sj}\pa_s u^\pm_i)dx'.
  	\end{array}
  \right.
\end{equation}
and
\begin{align}
&	\pa_t\vom^\pm+\vu^\pm\cdot\nabla \vom^\pm-\sum^3_{i=1}\vF^\pm_i\cdot\nabla {\vG^\pm_i}=\vom^\pm\cdot\nabla \vu^\pm-\sum^3_{i=1}{\vG^\pm_i}\cdot\nabla \vF^\pm_i,\\
\label{limvor}
&	\pa_t \vG^\pm_j+\vu^\pm\cdot\nabla  \vG^\pm_j-\vF^\pm_j\cdot\nabla\vom^\pm= \vG^\pm_j\cdot\nabla \vu^\pm-\vom^\pm\cdot\nabla \vF^\pm_j-2\sum^3_{s=1}\nabla u^\pm_s\times\nabla F^\pm_{sj}.
\end{align}
Here we recall that $p_{\vu_1^\pm,\vu_2^\pm}$ in (\ref{limit-f}) is defined by
\ben\nonumber
\left\{
\begin{array}{l}
\Delta p_{\vu_1^\pm,\vu_2^\pm}= -\mathrm{tr}(\nabla\vu_1^\pm\nabla\vu_2^\pm)
\quad \text{in}\quad\Omega^\pm_f,\\
p_{\vu_1^\pm, \vu_2^\pm}=0\quad\text{on}\quad\Gamma_f,
\quad\\
\ve_3\cdot\nabla p_{\vu_1^\pm,
\vu_2^\pm}=0\quad\text{on}\quad\Gamma^\pm.
\end{array}\right.
\een

To finish the proof of Theorem \ref{thm:1}, we need to show that  the limit system (\ref{eq:limit-theta})-(\ref{limvor}) is equivalent to the original system (\ref{els})-(\ref{elsi}).
We introduce the pressure $p^\pm$ of the fluid by
\begin{align*}
p^\pm=\mathcal{H}_f^\pm\underline{p}^\pm+\rho^\pm p_{\vupm, \vupm}-\rho^\pm\sum^3_{j=1}p_{\vF^\pm_j, \vF^\pm_j},
\end{align*}
where
\begin{align*}
\underline{p}^+=\underline{p}^-=\widetilde{\mathcal{N}}^{-1}_f\mathcal{P}(g^+-g^-)
\end{align*}
with
\begin{align*}
g^\pm=&~2(\underline{u}^\pm_1\partial_1\theta+\underline{u}^\pm_2\partial_2\theta)+\vN\cdot\underline{\nabla(p_{\vupm, \vupm}-p_{\vhpm, \vhpm})}
+\sum_{i,j=1}^2\big(\underline{u}_i^\pm \underline{u}^\pm_j-\sum^3_{l=1}\underline{F} ^\pm_{il}\underline{F} ^\pm_{jl}\big)\partial_i\partial_jf.
\end{align*}
The key idea to prove the consistence is to show that
\ben\label{eq:limit-u}
\left\{
\begin{array}{l}
\div\vw^\pm=0,\quad \curl\vw^\pm=0\quad\text{in}\quad\Omega^\pm_f,\\
\vw^\pm\cdot\vN_f=0\quad\text{on}\quad\Gamma_f,\\
w_3^\pm=0\quad\text{on}\quad \Gamma^\pm,\quad\int_{\Gamma^\pm}w_i^\pm dx'=0(i=1,2).
\end{array}\right.
\een
for
\begin{align*}
\vw^\pm=\partial_t\vu^\pm+\vu^\pm\cdot\nabla\vu^\pm-\sum^3_{j=1}\vF^\pm_j\cdot\nabla\vF^\pm_j+\nabla p^\pm,
\end{align*}
or
\begin{align*}
\vw^\pm=\pa_t  \vF_j +  \vu\cdot\nabla \vF_j - \vF_j\cdot\nabla\vu, \quad j=1,2,3.
\end{align*}
The proof of (\ref{eq:limit-u}) can be accomplished by following \cite[Section 9]{SWZ1} line by line, so we omit the details here.

\section{Proof of Theorem \ref{thm:2}}
In this section, we consider the system (\ref{elsf})-(\ref{elsfi}). Since the  proof of Theorem \ref{thm:2} is
quite analogous to the proof of Theorem \ref{thm:1}, we only present  main steps which are different from the problem  (\ref{els})-(\ref{elsi}).

We first note that the stability condition  ${\rm{rank}}(\vF)=2$ is equivalent to  (\ref{condition:s1}) with $\rho^+=0$ and $\vF^-=\vF$, which further implies that there exists $c_0>0$ such that
\begin{align}\label{condition:2}
\Lambda(\vF)\eqdefa&\inf_{x\in\Gamma_t}\inf_{\ph_1^2+\ph_2^2=1}\sum^3_{j=1}(F_{1j}\ph_1+F_{2j}\ph_2)^2\ge c_0.
\end{align}
Following the derivation of (\ref{eq:theta-d}), one can deduce that
\begin{align}
  \partial_tf=&\theta,\\
\partial_t\theta
=&-2(\underline{u}_1\partial_1\theta+\underline{u}_2\partial_2\theta)-\frac{1}{\r}\vN\cdot\underline{\nabla p}-\sum^2_{s,r=1}
	\underline{u}_s\underline{u}_r\partial_s\partial_rf+\sum^3_{j=1}\sum^2_{s,r=1}\underline{F}_{sj}\underline{F}_{rj}\partial_s\partial_rf.
\end{align}
with $p=\sum\limits^3_{j=1}p_{\vF_j,\vF_j}-p_{\vu,\vu}.$

By the stability condition (\ref{condition:2}), we obtain
\begin{align}\nonumber
	E_s(\partial_tf,f)\eqdefa &~\big\|(\partial_t+u_i\partial_i)\Ds f\big\|_{L^2}^2+\sum\limits^3_{j=1}\big\|\underline{F}_{ij}\partial_i\Ds f\big\|_{L^2}^2\\
\ge &~\big\|(\partial_t+u_i\partial_i)\Ds f\big\|_{L^2}^2+c_0\sum_{i=1}^2\big\|\partial_i\Ds f\big\|_{L^2}^2. \nonumber
\end{align}
Consequently, it holds that
\begin{align*}
	&\|\partial_t f\|_{H^{s-\f12}}^2+\|f\|_{H^{s+\f12}}^2
	\le C(c_0,L_0)\Big\{E_s(\partial_t f,f)+\|\partial_t f\|_{L^2}^2+\|f\|_{L^2}^2\Big\},
\end{align*}
which is actually (\ref{linear:equi-norm-2}). Then, the remain parts of the proof can follow the proof of Theorem \ref{thm:1} step by step.


\begin{appendix}
  \section{}

\subsection{Div-Curl system}
From Section 5 of \cite{SWZ1}, we know that for each div-curl system
\begin{equation}\label{eq:div-curl}
  \left\{
  	\begin{array}{ll}
  		\curl \vu =\vom,\quad\div \vu=g& \text{ in }\quad\Om_f^+,\\
\vu\cdot\vN_f =\vartheta& \text{ on}\quad \Gamma_f,   \\
\vu\cdot\ve_3 = 0,\quad \int_{\bbT^2} u_i dx'=\alpha_i (i=1,2)& \text{ on}\quad\Gamma^{+}.
  	\end{array}
  \right.
\end{equation}
with $f\in H^{s+\frac{1}{2}}(\bbT^2)$ for $s\ge 2$ and satisfying
\begin{align*}
	-(1-c_0)\le f\le(1-c_0),
\end{align*}
have a unique solution.

\begin{proposition}\label{prop:div-curl}
	Let $\sigma \in [2,s]$ be an integer. Given $\vom, g\in H^{\sigma-1}(\Omega_f^+)$, $\vartheta\in H^{\sigma-\frac12}(\Gamma_f)$ with the compatiblity condition:
	\begin{align*}
	  \int_{\Om_f^+} g dx=\int_{\Gamma_f} \vartheta ds,
	\end{align*}
	and $\vom$ satisfies
	\begin{align*}
		&\div\vom=0\quad \text{in}\quad \Omega_f^+,\quad\int_{\Gamma^+}\om_3dx'=0,
	\end{align*}
	Then there exists a unique $\vu\in H^{\sigma}(\Omp)$ of the div-curl system (\ref{eq:div-curl}) so that
	\begin{align*}
		\|\vu\|_{H^{\sigma}(\Omega_f^+)}\le C\big(c_0,\|f\|_{H^{s+\f12}}\big)\Big(\|\vom\|_{H^{\sigma-1}(\Omega_f^+)}+\|g\|_{H^{\sigma-1}(\Omega_f^+)}
		+\|\vartheta\|_{H^{\sigma-\frac12}(\Gamma_f)}+|\alpha_1|+|\alpha_2|\Big).
	\end{align*}
\end{proposition}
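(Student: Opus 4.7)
The plan is to reduce the problem to pure div-curl subproblems via a Hodge-type decomposition, treat each piece by elliptic theory on $\Om_*^+$, and transfer the estimates back to $\Om_f^+$ through the harmonic coordinate $\Phi_f$ from Lemma \ref{lem:basic}. First I would solve the mixed Neumann problem
\begin{equation*}
\Delta\phi=g \ \text{in}\ \Om_f^+,\qquad \nabla\phi\cdot\vN_f=\vartheta\ \text{on}\ \Gamma_f,\qquad \pa_3\phi=0\ \text{on}\ \Gamma^+.
\end{equation*}
The compatibility condition $\int_{\Om_f^+}g\,dx=\int_{\Gamma_f}\vartheta\,dx'$ is exactly what is required for solvability. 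Pulling $\phi$ back via $\Phi_f^+$ turns this into an elliptic problem on the reference domain $\Om_*^+$ with coefficients controlled by $\|f\|_{H^{s+\f12}}$, so standard elliptic regularity together with Lemma \ref{lem:basic} yields $\|\nabla\phi\|_{H^\sigma(\Om_f^+)}\le C(c_0,\|f\|_{H^{s+\f12}})(\|g\|_{H^{\sigma-1}(\Om_f^+)}+\|\vartheta\|_{H^{\sigma-\f12}(\Gamma_f)})$.

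Next, set $\vv=\vu-\nabla\phi-\vw$, where $\vw$ is a curl-free and divergence-free field to be fixed later. The residual $\vv$ should satisfy $\div\vv=0$ and $\curl\vv=\vom$ in $\Om_f^+$, with $\vv\cdot\vN_f=0$ on $\Gamma_f$, $v_3=0$ on $\Gamma^+$, and $\int_{\Gamma^+}v_i\,dx'=0$. Since $\vv$ is divergence-free and tangent to the boundary, I would introduce a vector potential $\vA$ with $\vv=\curl\vA$ in the Coulomb gauge $\div\vA=0$, so that $\curl\vv=\vom$ becomes the elliptic system $-\Delta\vA=\vom$ together with boundary conditions induced from those on $\vv$. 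The hypotheses $\div\vom=0$ in $\Om_f^+$ and $\int_{\Gamma^+}\om_3\,dx'=0$ are precisely the compatibility conditions needed for this elliptic system to admit a solution, and I would close the estimate for $\vA$ in $H^{\sigma+1}(\Om_f^+)$, hence $\vv$ in $H^\sigma(\Om_f^+)$, again using $\Phi_f$ to transfer regularity. To absorb the prescribed circulations $\alpha_i$, I would pick $\vw$ from the two-dimensional space of curl-free, divergence-free vector fields in $\Om_f^+$ that are tangent to $\Gamma_f$ with $w_3=0$ on $\Gamma^+$; this space is parametrized by $\int_{\Gamma^+}w_i\,dx'$ for $i=1,2$, so $\vw$ is uniquely determined by the requirement $\int_{\Gamma^+}w_i\,dx'=\alpha_i$, and its $H^\sigma$ norm is bounded by $|\alpha_1|+|\alpha_2|$ times a constant depending on $f$.

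The main obstacle will be the vector-potential step, because the boundary conditions for $\vA$ are mixed (a tangential condition on $\Gamma_f$ and a complementary condition on $\Gamma^+$), and the corresponding elliptic estimate must be tracked sharply with constants depending only on $c_0$ and $\|f\|_{H^{s+\f12}}$. Here the harmonic coordinate $\Phi_f$ is essential: it straightens $\Gamma_f$ at the cost of variable but controlled coefficients, and the multiplicative estimates in Lemma \ref{lem:basic} propagate to give the stated $H^\sigma$-bound on $\vu$. Uniqueness follows by taking $\vom=0$, $g=0$, $\vartheta=0$, $\alpha_i=0$: then $\vu$ is curl-free and divergence-free, and integration by parts using $\vu\cdot\vN_f=0$ on $\Gamma_f$ and $u_3=0$ on $\Gamma^+$ together with the vanishing mean on $\Gamma^+$ forces $\vu\equiv0$.
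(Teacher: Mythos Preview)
The paper does not actually prove this proposition: it is stated in the appendix as a quotation from Section~5 of \cite{SWZ1}, with no argument given beyond the reference. So there is no ``paper's own proof'' to compare against here.

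Your outline is the standard Hodge-type route and is, in substance, what the cited reference does: split off a scalar potential $\nabla\phi$ to absorb $(g,\vartheta)$, handle the curl via a vector potential in Coulomb gauge, and adjust by a harmonic (curl-free, divergence-free, tangent) field to match the two circulations $\alpha_1,\alpha_2$ coming from the topology of $\mathbb{T}^2\times(\text{interval})$. Your identification of the compatibility conditions is correct: the Neumann problem for $\phi$ needs $\int g=\int\vartheta$, and the vector-potential step needs $\div\vom=0$ and $\int_{\Gamma^+}\om_3=0$. The uniqueness argument you give is also the right one. The one place your sketch is genuinely thin is the vector-potential step---translating the boundary conditions $\vv\cdot\vN_f=0$ on $\Gamma_f$ and $v_3=0$ on $\Gamma^+$ into well-posed boundary conditions for $\vA$ requires more care than ``induced from those on $\vv$'' (one typically imposes $\vA\times\vn=0$ on one piece and $\vA\cdot\vn=0$, $\curl\vA\times\vn=0$ on the other, or argues via a stream-function/Biot--Savart construction after flattening). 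Since you already flag this as the main obstacle and plan to pass through $\Phi_f$ to get elliptic estimates with constants depending only on $c_0$ and $\|f\|_{H^{s+\frac12}}$, the sketch is acceptable as a roadmap, but this step would need to be fully written out to constitute a proof.
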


\subsection{Commutator estimate}

\begin{lemma}\label{lem:commutator}
	If $s>1+\frac{d}{2}$, then we have
	\begin{equation}
		\big\|[a, \langle\na\rangle^s]u\big\|_{L^2}\le C\|a\|_{H^{s}}\|u\|_{H^{s-1}}.
	\end{equation}
\end{lemma}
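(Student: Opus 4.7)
This is a variant of the Kato--Ponce commutator estimate, and the goal is to exhibit one derivative of cancellation in $[a,\langle\nabla\rangle^s]$ and then close via Sobolev embedding, which is available because $s-1>d/2$ gives $H^{s-1}\hookrightarrow L^\infty$ and $H^{s}\hookrightarrow W^{1,\infty}$. I plan to use a Littlewood--Paley (Bony) decomposition
\[
au = T_a u + T_u a + R(a,u),
\]
where $T_a u=\sum_{j}S_{j-3}a\,\Delta_j u$ is the paraproduct and $R(a,u)=\sum_{|j-k|\le 2}\Delta_j a\,\Delta_k u$ is the remainder. The commutator then splits as
\[
[a,\langle\nabla\rangle^s]u = [T_a,\langle\nabla\rangle^s]u + \bigl(T_{\langle\nabla\rangle^s u}a - \langle\nabla\rangle^s T_u a\bigr) + \bigl(R(a,\langle\nabla\rangle^s u) - \langle\nabla\rangle^s R(a,u)\bigr),
\]
and I will estimate each piece separately in $L^2$.

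For the paraproduct commutator $[T_a,\langle\nabla\rangle^s]u$, the key observation is frequency localization: in the block $[S_{j-3}a,\langle\nabla\rangle^s]\Delta_j u$, the symbol $S_{j-3}a$ is smooth on the dyadic scale of $\Delta_j u$, so a first-order Taylor expansion of the Fourier multiplier $\langle\nabla\rangle^s$ around the center frequency gains exactly one derivative on $a$ while costing $2^{j(s-1)}$ on $u$. Almost-orthogonality between different dyadic blocks then gives
\[
\|[T_a,\langle\nabla\rangle^s]u\|_{L^2} \le C\|\nabla a\|_{L^\infty}\|u\|_{H^{s-1}}.
\]
For the two remaining low-high and high-high terms, there is no commutator cancellation, but both $T_u a$ and $R(a,u)$ lose only high frequencies in $a$: the standard paraproduct/remainder bounds yield
\[
\|\langle\nabla\rangle^s T_u a\|_{L^2}+\|T_{\langle\nabla\rangle^s u}a\|_{L^2}+\|\langle\nabla\rangle^s R(a,u)\|_{L^2}+\|R(a,\langle\nabla\rangle^s u)\|_{L^2}
\le C\bigl(\|u\|_{L^\infty}\|a\|_{H^s}+\|a\|_{H^s}\|u\|_{L^\infty}\bigr),
\]
where for $R(a,\langle\nabla\rangle^s u)$ one uses that $s-1>d/2$ to sum the dyadic contributions.

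Combining the three estimates yields the Kato--Ponce form
\[
\|[a,\langle\nabla\rangle^s]u\|_{L^2}\le C\bigl(\|\nabla a\|_{L^\infty}\|u\|_{H^{s-1}}+\|a\|_{H^s}\|u\|_{L^\infty}\bigr),
\]
and the proof is then finished by the two Sobolev embeddings $\|\nabla a\|_{L^\infty}\le C\|a\|_{H^s}$ and $\|u\|_{L^\infty}\le C\|u\|_{H^{s-1}}$, both of which follow from the hypothesis $s>1+d/2$. The main technical obstacle is the paraproduct commutator estimate, which requires the frequency-localized symbolic expansion of $\langle\nabla\rangle^s$; everything else reduces to standard $L^2$--$L^\infty$ paraproduct bounds and Sobolev embedding.
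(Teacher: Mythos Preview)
Your argument is correct and is the standard route to this inequality: the Bony decomposition of $[a,\langle\nabla\rangle^s]u$, the one-derivative gain on the paraproduct commutator $[T_a,\langle\nabla\rangle^s]$ via symbolic expansion, and the $L^2$--$L^\infty$ bounds on the remaining pieces combine to give the Kato--Ponce form, after which the hypothesis $s>1+d/2$ closes via Sobolev embedding exactly as you say.

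There is nothing to compare against in the paper itself: Lemma~\ref{lem:commutator} is stated in the appendix as a known technical estimate without proof, so the paper simply quotes the result. Your write-up supplies precisely the standard proof that the authors are implicitly invoking.
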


\subsection{Sobolev estimates of DN operator}
\begin{proposition}\label{prop:DN-Hs}
	If $f\in H^{s+\frac{1}{2}}(\bbT^2)$ for $s>\frac{5}{2}$, then it holds that for any $\sigma\in \big[-\frac{1}{2},s- \frac{1}{2}	\big]$,
	\begin{equation}
		\|\mathcal{N}^\pm_f\psi\|_{H^{\sigma}}\le  K_{s+\frac{1}{2},f}\|\psi\|_{H^{\sigma+1}}.		
	\end{equation}
	Moreover, it holds that for any $\sigma\in \big[\frac{1}{2},s- \frac{1}{2}	\big]$,
	\begin{equation}
		\|\big(\mathcal{N}^+_f-\mathcal{N}^-_f\big)\psi\|_{H^\sigma}\le K_{s+\frac{1}{2},f}\|\psi\|_{H^\sigma},
	\end{equation}
	where $K_{s+\frac{1}{2},f}$ is a constant depending on $c_0$ and $||f||_{H^s}$.
\end{proposition}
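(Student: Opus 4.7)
For the first estimate, the plan is a straightforward application of elliptic regularity. I would pull the harmonic extension $\mathcal{H}_f^\pm\psi$ back to the fixed reference strip $\Omega_*^\pm$ via the map $\Phi_f^\pm$ of Section~2. Setting $U^\pm = \mathcal{H}_f^\pm\psi\circ\Phi_f^\pm$, the harmonic equation transforms into a uniformly elliptic divergence-form equation $\partial_i(a_{ij}(y)\partial_j U^\pm)=0$ on $\Omega_*^\pm$ with Dirichlet data $\psi$ on $\Gamma_*$ and vanishing Neumann data on $\Gamma^\pm$, where the coefficients $a_{ij}$ are polynomial in $\nabla\Phi_f^\pm$ and hence lie in $H^{s-\frac12}$, uniformly elliptic by the graph condition on $f$. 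Standard $H^{\sigma+1}$ elliptic regularity (by flattening the boundary and iterating tangential differentiations, combined with the product estimates of Lemma~\ref{lem:basic}) then yields $\|U^\pm\|_{H^{\sigma+1}(\Omega_*^\pm)}\le K_{s+\frac12,f}\|\psi\|_{H^{\sigma+1}(\bbT^2)}$ for $\sigma+1\in[\frac12,s]$. Taking the trace of the normal component of the gradient on $\Gamma_f$ gives the claimed first estimate.

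For the second estimate, the plan is to exploit a cancellation of principal symbols. Both $\mathcal{N}_f^+$ and $\mathcal{N}_f^-$ are first-order pseudodifferential operators on $\bbT^2$ that share the common principal symbol $\lambda(x',\xi)=\sqrt{(1+|\nabla f|^2)|\xi|^2-(\nabla f\cdot\xi)^2}$. As a sanity check, in the flat case $f\equiv 0$ a direct Fourier computation yields $\mathcal{N}^+_0=\mathcal{N}^-_0=|D|\tanh|D|$, so the difference is smoothing of infinite order; the only source of asymmetry between the two operators for general $f$ is the zero-order contribution coming from the Neumann conditions at the top/bottom caps $\Gamma^\pm$. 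To make this rigorous I would invoke the paradifferential framework of Alazard–Burq–Zuily \cite{ABZ} (as adapted to the strip geometry in \cite{SWZ1}): decompose
\begin{equation*}
\mathcal{N}_f^\pm=T_\lambda+R_f^\pm,
\end{equation*}
where $T_\lambda$ is the paradifferential operator with symbol $\lambda$, and $R_f^\pm$ is of order zero with operator norm on $H^\sigma$ bounded by $K_{s+\frac12,f}$ for $\sigma\in[\frac12,s-\frac12]$. The difference $\mathcal{N}_f^+-\mathcal{N}_f^-=R_f^+-R_f^-$ is then of order zero, giving the gain of one derivative.

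The main obstacle is the quantitative construction of the zero-order remainder $R_f^\pm$ under the limited regularity $f\in H^{s+\frac12}$. This requires a careful paralinearization of the harmonic extension at the free boundary, control of the dependence on $f$ via tame product/composition estimates, and symbolic calculus for paradifferential operators with Sobolev coefficients; it is also what forces the restriction $\sigma\le s-\frac12$ in both statements and the lower bound $\sigma\ge\frac12$ in the difference estimate (below which paraproduct remainders are no longer absorbed). Since these are precisely the outputs of the constructions in \cite{ABZ, SWZ1}, my plan is to reduce to those references once the reference-domain reformulation and coefficient bounds are in place.
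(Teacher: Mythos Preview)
Your proposal is correct and matches the paper's treatment: the paper does not prove Proposition~\ref{prop:DN-Hs} at all but simply records it in the appendix as a known result imported from \cite{ABZ, SWZ1}, and your plan---elliptic regularity in the straightened domain for the first bound, and the shared principal symbol $\lambda(x',\xi)$ via the paradifferential decomposition of \cite{ABZ} for the second---is precisely the content of those references. Your sketch is in fact more detailed than anything the paper offers here.
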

\begin{proposition}\label{prop:DN-inverse}
	If $f\in H^{s+\frac{1}{2}}(\bbT^2)$ for $s>\frac{5}{2}$, then it holds that for any $\sigma\in \big[-\frac{1}{2},s- \frac{1}{2}\big]$,
	\begin{equation}
		\|\mathcal{G}^\pm_f\psi\|_{H^{\sigma+1}}\le  K_{s+\frac{1}{2},f}\|\psi\|_{H^{\sigma}},
	\end{equation}
	where $\mathcal{G}^\pm_f\triangleq\big(\mathcal{N}^\pm_f\big)^{-1}$.
\end{proposition}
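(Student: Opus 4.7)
The plan is to prove the estimate in two stages: first establish the base case $\sigma=-\frac12$ via the coercivity of $\mathcal{N}_f^\pm$, then bootstrap to higher $\sigma$ using the ellipticity of $\mathcal{N}_f^\pm$ as a first-order operator together with Proposition \ref{prop:DN-Hs}. The invertibility of $\mathcal{N}_f^\pm$ on mean-zero spaces from Lemma \ref{lem:DN}(3) ensures that $\mathcal{G}_f^\pm=(\mathcal{N}_f^\pm)^{-1}$ is well defined on $H_0^\sigma(\bbT^2)$.

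For the base case, set $\phi=\mathcal{G}_f^\pm\psi\in H_0^{1/2}(\bbT^2)$ so that $\mathcal{N}_f^\pm\phi=\psi$. Pairing both sides with $\phi$ and invoking Lemma \ref{lem:DN}(2) gives
\begin{equation*}
c\|\phi\|_{H^{1/2}}^2\le (\mathcal{N}_f^\pm\phi,\phi)=\langle\psi,\phi\rangle\le \|\psi\|_{H^{-1/2}}\|\phi\|_{H^{1/2}},
\end{equation*}
with $c$ depending on $c_0$ and $\|f\|_{W^{1,\infty}}$. Dividing by $\|\phi\|_{H^{1/2}}$ yields the claim for $\sigma=-\frac12$, with constant $K_{s+1/2,f}$ controlled by $c_0$ and $\|f\|_{H^{s+1/2}}$ via the Sobolev embedding bounding $\|f\|_{W^{1,\infty}}$.

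For $\sigma\in(-\frac12,s-\frac12]$, I would bootstrap by testing the equation $\mathcal{N}_f^\pm\phi=\psi$ against $\langle\nabla\rangle^{2\sigma+1}\phi$. Using self-adjointness of $\mathcal{N}_f^\pm$ (Lemma \ref{lem:DN}(1)) and its principal ellipticity (with symbol comparable to $|\xi|$ modulo lower-order terms depending on $\nabla f$), a G{\aa}rding-type manipulation combined with the commutator estimate $\|[\langle\nabla\rangle^{\sigma+1/2},\mathcal{N}_f^\pm]\phi\|_{L^2}\le K_{s+1/2,f}\|\phi\|_{H^{\sigma+1/2}}$ gives
\begin{equation*}
c\|\phi\|_{H^{\sigma+1}}^2\le \|\psi\|_{H^\sigma}\|\phi\|_{H^{\sigma+1}}+K_{s+1/2,f}\|\phi\|_{H^{\sigma+1/2}}^2.
\end{equation*}
Absorbing the first term on the right via Young's inequality and controlling the lower-order norm $\|\phi\|_{H^{\sigma+1/2}}$ by interpolation between the base case estimate and $\|\phi\|_{H^{\sigma+1}}$, one closes the estimate inductively, stepping up through $\sigma$ in half-integer increments.

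The main obstacle is justifying the commutator estimate and the G{\aa}rding inequality for $\mathcal{N}_f^\pm$ in the regime of finite Sobolev regularity of $f$. Since $\mathcal{N}_f^\pm$ is defined only implicitly via the harmonic extension in the rough domain $\Omega_f^\pm$, direct Fourier arguments are unavailable; one must instead use the paradifferential calculus in the spirit of \cite{ABZ} or the elliptic/trace estimates developed in \cite{SWZ1}, carefully tracking the dependence of constants on $\|f\|_{H^{s+1/2}}$ to produce the uniform bound $K_{s+1/2,f}$. Once the commutator bound is in hand, the bootstrap is routine.
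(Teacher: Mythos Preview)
The paper does not actually prove this proposition: it is merely stated in the appendix as a known estimate, with the relevant theory of the Dirichlet--Neumann operator attributed to \cite{ABZ} and \cite{SWZ1}. So there is no ``paper's own proof'' to compare against, and your proposal should be assessed on its own merits.

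Your base case $\sigma=-\tfrac12$ is clean and correct: the coercivity from Lemma~\ref{lem:DN}(2) immediately gives $\|\mathcal{G}_f^\pm\psi\|_{H^{1/2}}\le c^{-1}\|\psi\|_{H^{-1/2}}$ on mean-zero data.

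For the higher-regularity step, your G{\aa}rding/commutator bootstrap is a valid strategy in principle, but you have correctly identified that it forces you through the paradifferential analysis of $\mathcal{N}_f^\pm$ in \cite{ABZ}, which is heavy machinery. A more direct route---and the one implicit in the elliptic framework of \cite{SWZ1}---bypasses this entirely. Observe that $\phi=\mathcal{G}_f^\pm\psi$ is the Dirichlet trace on $\Gamma_f$ of the solution $u$ to the Neumann problem
\[
\Delta u=0\ \text{in}\ \Omega_f^\pm,\qquad \mp\vN_f\cdot\nabla u=\psi\ \text{on}\ \Gamma_f,\qquad \partial_3 u=0\ \text{on}\ \Gamma^\pm,\qquad \textstyle\int_{\Gamma_f}u=0.
\]
Standard elliptic regularity for the Neumann problem in the domain $\Omega_f^\pm$ (with the boundary regularity $f\in H^{s+1/2}$, $s>\tfrac52$) gives $\|u\|_{H^{\sigma+3/2}(\Omega_f^\pm)}\le K_{s+1/2,f}\|\psi\|_{H^\sigma(\Gamma_f)}$ for $\sigma\in[-\tfrac12,s-\tfrac12]$, and then the trace theorem yields $\|\phi\|_{H^{\sigma+1}}\le C\|u\|_{H^{\sigma+3/2}(\Omega_f^\pm)}$. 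This closes the estimate in one step, with the dependence on $\|f\|_{H^{s+1/2}}$ coming from the change of variables to the reference domain (Lemma~\ref{lem:basic}) rather than from any commutator analysis of the DN operator itself. Your approach would work, but this one sidesteps the obstacle you flagged.
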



\end{appendix}


\begin{thebibliography}{99}

 \bibitem{ABZ} T. Alazard, N. Burq and C. Zuily, {\it On the Cauchy problem for gravity water waves},
 Invent. Math.,  198(2014), 71-163.

\bibitem{AM}  D.  M. Ambrose and N. Masmoudi, {\it Well-posedness of 3D vortex sheets with surface tension}, Commun. Math. Sci.,  5 (2007) 391-430.



\bibitem{Ax} W. I. Axford, {\it Note on a problem of magnetohydrodynamic stability}, Canad. J. Phys., 40(1962), 654-655.


\bibitem{Chen} G.-Q. Chen and Y.-G Wang, {\it Existence and stability of compressible current-vortex sheets in three-dimensional magnetohydrodynamics},
 Arch. Ration. Mech. Anal., 187(2008), 369-408.

\bibitem{CHW} R. M. Chen, J. Hu and D. Wang, {\it Linear stability of compressible vortex sheets in
two-dimensional elastodynamics}, Advances in Mathematics, 311 (2017), 18¨C60.

\bibitem{CCS} C. A. Cheng, D. Coutand and S. Shkoller, {\it On the motion of vortex sheets with surface tension in three-dimensional Euler equations with vorticity}, Comm. Pure Appl. Math., 61 (2008), 1715-1752.



\bibitem{CMST} J.-F. Coulombel, A. Morando, P.  Secchi and P. Trebeschi, {\it A priori estimates for 3D incompressible current-vortex sheets},
              Comm. Math. Phys.,  311(2012), 247-275.


\bibitem{GW}  X. Gu and Y. Wang, {\it On the construction of solutions to the free-surface incompressible ideal magnetohydrodynamic equations}, arXiv:1609.07013.

\bibitem{Hao} C. Hao, {\it On the motion of free interface in ideal incompressible MHD}, Arch. Rational Mech. Anal. (2017)

\bibitem{HL} C. Hao and T. Luo, {\it A priori estimates for free boundary problem of incompressible inviscid magnetohydrodynamic flows},
             Arch. Ration. Mech. Anal.,  212(2014), 805-847.

\bibitem{HW} C. Hao and D. Wang, {\it A priori estimates for the free boundary problem of incompressible neo-Hookean elastodynamics}, J.Differential Equations, 261(2016) 712-737.


\bibitem{Maj} A. Majda and A.  Bertozzi, {\it Vorticity and incompressible flow}, Cambridge Texts in Applied Mathematics, 27,
 Cambridge University Press, Cambridge, 2002.


\bibitem{MTT1} A. Morando, Y. Trakhinin and P. Trebeschi,  {\it Stability of incompressible current-vortex sheets},
J. Math. Anal. Appl., 347(2008), 502-520.

\bibitem{MTT2}  A. Morando, Y. Trakhinin and P. Trebeschi,  {\it Well-posedness of the linearized plasma-vacuum interface problem in ideal incompressible MHD}, Quart. Appl. Math., 72(2014), 549-587.


\bibitem{Sec} P. Secchi and Y. Trakhinin, {\it Well-posedness of the plasma-vacuum interface problem},
             Nonlinearity, 27(2014), 105-169.

\bibitem{SZ1} J. ~Shatah and C. Zeng,  {\it Geometry and  a priori estimates for free boundary problems of the Euler's equation}, Comm. Pure Appl. Math., 61(2008), 698-744.

\bibitem{SZ2} J. ~Shatah and C. Zeng,  {\it A priori estimates for fluid interface problems}, Comm. Pure Appl. Math., 61(2008), 848-876.


\bibitem{SWZ1} Y. Sun, W. Wang and Z. Zhang, {\it Nonlinear stability of the current-vortex sheet to the incompressible MHD equations}, Comm. Pure Appl. Math., 71(2018), 356-403.

\bibitem{SWZ2} Y. Sun, W. Wang and Z. Zhang, {\it Well-posedness of the plasma-vacuum interface problem for ideal incompressible MHD}, 	arXiv:1705.00418.

\bibitem{Sy} S. I. Syrovatskij, {\it The stability of tangential discontinuities in a magnetohydrodynamic medium},
 Z. Eksperim. Teoret. Fiz., 24(1953), 622-629.

\bibitem{ST} P. Secchi and Y. Trakhinin, {\it Well-posedness of the plasma-vacuum interface problem},  Nonlinearity, 27(2014), 105-169.


\bibitem{Tra-in} Y. Trakhinin, {\it On the existence of incompressible current-vortex sheets: study of a linearized free boundary value problem},
           Math. Methods Appl. Sci.,  28(2005), 917-945.

\bibitem{Tra1} Y. Trakhinin, {\it Existence of compressible current-vortex sheets: variable coefficients linear analysis}, Arch. Ration. Mech. Anal., 177(2005), 331-366.


\bibitem{Tra2} Y.  Trakhinin,  {\it The existence of current-vortex sheets in ideal compressible magnetohydrodynamics}, Arch. Ration. Mech. Anal., 191(2009), 245-310.

\bibitem{Tra-JDE} Y.  Trakhinin, {\it On the well-posedness of a linearized plasma-vacuum interface problem in ideal compressible MHD}, { J. Differential Equations}, {249}(2010), 2577-2599.

\bibitem{Tra3} Y. Trakhinin,{\it Well-posedness of the free boundary problem in compressible elastodynamics}, arXiv:1705.11120v3

\bibitem{WY}  Y.-G. Wang and F. Yu,  {\it Stabilization effect of magnetic fields on two-dimensional compressible current-vortex sheets}, Arch. Ration. Mech. Anal.,  208(2013), 341-389.

\bibitem{Wu1} S. Wu, {\it Well-posedness in Sobolev spaces of the full water wave problem in $2$-D},  Invent. Math., 130(1997), 39--72.

\bibitem{Wu2} S. Wu, {\it Well-posedness in Sobolev spaces of the full water wave problem in 3-D}, J. ~Amer. ~Math. ~Soc., 12(1999), 445-495.

\bibitem{ZZ} P. Zhang and Z. Zhang, {\it On the free boundary problem of  three-dimensional incompressible Euler equations},
Comm. Pure Appl. Math., 61(2008), 877--940.

\end{thebibliography}
\end{document}